\newtheorem{theorem}{Theorem}[section]
\newtheorem{definition}[theorem]{Definition}
\newtheorem{proposition}[theorem]{Proposition}
\newtheorem{corollary}[theorem]{Corollary}
\newtheorem{lemma}[theorem]{Lemma}
\theoremstyle{remark}
\newtheorem{example}[theorem]{Example}
\theoremstyle{remark}
\newtheorem{remark}[theorem]{Remark}
\begin{document}

\title[Idempotent states on quantum permutation groups]{Analysis for idempotent states on quantum permutation groups}

\author{J.P. McCarthy}
\address{\parbox{\linewidth}{Department of Mathematics, Munster Technological University, Cork, Ireland.\\ \texttt{jp.mccarthy@mtu.ie}}}

\subjclass[2020]{46L30,46L67}
\keywords{quantum permutations, idempotent states}

\begin{abstract}
Woronowicz proved the existence of the Haar state for compact quantum groups under a separability assumption later removed by Van Daele in a new existence proof. A minor adaptation of Van Daele's proof yields an idempotent state in any non-empty weak-* compact convolution-closed convex subset of the state space. Such subsets, and their associated idempotent states, are studied in the case of quantum permutation groups.
\end{abstract}
\setcounter{tocdepth}{1}
\maketitle

\baselineskip=16.5pt
\tableofcontents
\baselineskip=14pt

\section*{Introduction}
It is sometimes quipped that \emph{quantum groups are neither quantum nor groups}. Whatever about compact quantum groups not being quantum, compact quantum groups are, of course, not in general classical groups. On the other hand, compact Hausdorff groups \emph{are} compact quantum groups. Furthermore, the classical theorems of the existence of the Haar measure, Peter--Weyl, Tannaka--Krein duality, etc., can all be viewed as special cases of the quantum analogues proved by Woronowicz \cite{wo2,wo3}, and thus naturally the theory of compact quantum groups has many commonalities with the theory of compact groups.

\bigskip

Not all classical theorems generalise so nicely:
\begin{theorem}[Kawada--It\^{o} Theorem, (Th. 3, \cite{kaw})]\label{kawito}
Let $G$ be a compact separable group. Then a probability distribution on $G$ is idempotent with respect to convolution if and only if it is the uniform distribution on a closed subgroup $H\leq G$.
\end{theorem}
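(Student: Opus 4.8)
The plan is to prove both implications, treating the construction direction as routine and concentrating on the converse. For the easy direction, suppose the distribution is the normalised Haar measure $\eta_H$ of a closed subgroup $H\subseteq G$. Since $H$ is compact, $\eta_H$ exists and is bi-invariant, so for $f\in C(G)$ the inner integral $\int_H f(st)\,d\eta_H(s)$ is independent of $t\in H$ by right-invariance; integrating this against $d\eta_H(t)$ returns $\eta_H(f)$, so $\eta_H\ast\eta_H=\eta_H$.

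For the converse, let $\mu$ be idempotent and put $H:=\operatorname{supp}(\mu)$, a non-empty closed, hence compact, subset of $G$; here separability guarantees that the support is well-defined and that sequential arguments suffice. The first step is to read off the algebraic structure of $H$ from the convolution-support formula $\operatorname{supp}(\mu\ast\nu)=\overline{\operatorname{supp}(\mu)\cdot\operatorname{supp}(\nu)}$. Idempotence yields $\overline{HH}=H$, so $H$ is a closed sub-semigroup of $G$. I would then upgrade this to a subgroup: by the Ellis--Numakura lemma every compact Hausdorff semigroup contains an idempotent, and the only idempotent in a group is the neutral element $e$, so $e\in H$; for $h\in H$ the closed sub-semigroup generated by $h$ again contains $e$, and a sequence of powers $h^{n_k}\to e$ with $n_k\geq 2$ gives $h^{n_k-1}\to h^{-1}$, placing $h^{-1}\in H$. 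Thus $H$ is a closed subgroup.

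It remains to identify $\mu$ with $\eta_H$, and I would encode idempotence operator-theoretically. Define the Markov operator $T\colon C(H)\to C(H)$ by $(Tf)(x)=\int_H f(xy)\,d\mu(y)$; a direct computation shows $T$ is unital, positive, satisfies $T^2=T$, and leaves $\mu$ invariant, $\mu(Tf)=\mu(f)$. The decisive observation is a maximum principle: if $Tf=f$ and $f$ attains its maximum $M$ at some $x_0\in H$, then $\int_H\!\big(M-f(x_0 y)\big)\,d\mu(y)=0$ forces $f\equiv M$ on $x_0\operatorname{supp}(\mu)=x_0 H=H$, so $f$ is constant. As $T$ is a projection its range coincides with its fixed-point space, which therefore consists only of constants; combined with invariance this gives $(Tf)(x)=\mu(f)$ for all $x$, i.e. $\delta_x\ast\mu=\mu$ for every $x\in H$. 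Left-invariance under $H$ characterises the Haar measure, so $\mu=\eta_H$.

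I expect the main obstacle to be the passage from sub-semigroup to subgroup: this is precisely where compactness is indispensable and where the topological input (the Ellis--Numakura idempotent lemma, together with the limit argument for inverses) cannot be avoided, whereas the support formula and the final maximum-principle argument are comparatively mechanical once full support in a group is in hand. It is this semigroup-theoretic, intrinsically non-algebraic step that I anticipate will have no faithful analogue in the quantum setting, which is presumably the phenomenon the paper goes on to isolate.
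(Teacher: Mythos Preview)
The paper does not prove this statement: the Kawada--It\^{o} theorem is quoted in the introduction purely as a classical background result, with a citation to the original 1940 paper, and no proof is offered. It serves only as motivation for the discussion of non-Haar idempotents in the quantum setting. So there is no ``paper's own proof'' to compare your proposal against.

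That said, your argument is essentially correct and follows a standard route. A couple of minor comments: invoking Ellis--Numakura is heavier machinery than needed, since the more elementary fact that a closed subsemigroup of a compact group is automatically a subgroup suffices (your limit-of-powers argument already contains most of this); and the separability hypothesis is in fact unnecessary for the result, as the support of a regular Borel probability measure on any compact Hausdorff space is well-defined without it. Your closing remark correctly anticipates the paper's theme: it is precisely the failure of the semigroup-to-subgroup passage, or rather its algebraic avatar (the null-space of an idempotent state being only a one-sided ideal), that distinguishes Haar from non-Haar idempotents in the quantum case (cf.\ Proposition~\ref{fst3.3}).
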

The quantum analogue of a closed subgroup, $\mathbb{H}\leq \mathbb{G}$, is given by a comultiplication-respecting surjective $\ast$-homomorphism $\pi:C(\mathbb{G})\to C(\mathbb{H})$, and  the direct quantum analogue of the Kawada--It\^{o} theorem would be that each state idempotent with respect to convolution is a \emph{Haar idempotent}, that is a state on $C(\mathbb{G})$ of the form $h_{C(\mathbb{H})}\circ \pi$ (where  $h_{C(\mathbb{H})}$  is the Haar state on $C(\mathbb{H})$). However in 1996 Pal discovered non-Haar idempotents in the Kac--Paljutkin quantum group \cite{pal}, and thus the direct quantum analogue of the Kawada--It\^{o} theorem is false (in fact there are counterexamples in the dual of $S_3$, an even `smaller' quantum group \cite{frs}).

\bigskip

The null-spaces of Pal's idempotent states are only left ideals. Starting with \cite{frs},  Franz, Skalski and coauthors  undertook a general and comprehensive study of idempotent states on compact quantum groups, and, amongst other results, showed that the null-space being a one-sided rather than two-sided ideal is the only obstruction to an idempotent being Haar (Proposition \ref{fst3.3}). This study continued into the locally compact setting: a history of this whole programme of study, with references, is summarised in the introduction of Kasprzak and So{\l}tan \cite{kap}. The current work is supplemental to this study: giving a new way of viewing idempotent states, a new way of thinking about non-Haar idempotents (with group-like support projection), giving new examples of idempotent states (see Section \ref{stab}), and concentrating on idempotent states related to the inclusion of the classical permutation group in the quantum permutation group. The latter of these explains the (non-standard) use of the bidual as a von Neumann algebra in this work. The conventional choice of von Neumann algebra in the study of compact quantum groups is the algebra:
$$L^\infty(\mathbb{G}):=C_{\text{r}}(\mathbb{G})'',$$
but as the reduced algebra $C_{\text{r}}(\mathbb{G})$ does not in general admit a character, it cannot `see' the classical version $G\leq \mathbb{G}$, so instead  the bidual $C(\mathbb{G})^{**}$ is used.

\bigskip

In the case of quantum permutation groups, interpreting elements of the state space as   quantum permutations, called the Gelfand--Birkhoff picture in \cite{mcc}, leads to the consideration of distinguished \emph{subsets} of the state space. In \cite{mcc}, using the fact that idempotent states in the case of finite quantum groups have group-like support (Cor. 4.2, \cite{frs}), \emph{subsets} of the state space are associated to idempotent states. The current work generalises this point of view: the subset associated to an idempotent state $\phi$  on a  quantum permutation group $\mathbb{G}$ is called a \emph{quasi}-subgroup (after \cite{kap}), and given by the set of states absorbed by the idempotent:
$$\mathbb{S}_{\phi}=\{\varphi\in\mathcal{S}(C(\mathbb{G}))\colon\,\varphi\star \phi=\phi=\phi\star \varphi\}.$$
Whenever a quasi-subgroup is given by a (universal) Haar idempotent, it is stable under \emph{wave-function collapse} (see Definition \ref{condition}). There is an obvious relationship between ideals and wave-function collapse: that  all classical quasi-subgroups correspond to subgroups  is just another way of saying that there are no one-sided ideals in the commutative case. An equivalence between Haar idempotent states and the stability of the associated quasi-subgroup under wave-function collapse is not proven here, but there is a partial result  (Theorem \ref{wfct}, for which a key lemma is Remark \ref{Vaes} of Stefaan Vaes).

\bigskip

The other theme of the study of Franz, Skalski and coauthors is the relationship between idempotent states and group-like projections, and culminates in a comprehensive statement about idempotent states being group-like projections in the multiplier algebra of the dual discrete quantum group \cite{frs}. This work contains no such comprehensive statement, but does extend the definition of continuous group-like projections $p\in C(\mathbb{G})$ to group-like projections $p\in C(\mathbb{G})^{**}$, the bidual.

\bigskip

Idempotent states with group-like support projection are particularly well-behaved, however it is shown that in the non-coamenable case the support projection of the Haar state is not group-like.

\bigskip

The consideration of subsets of the state space leads directly to the key observation in this work that non-empty weak-* compact convolution-closed convex subsets $\mathbb{S}$ of the state space, which are termed Pal sets, contain $\mathbb{S}$-invariant idempotent states $\phi_{\mathbb{S}}$:
$$\varphi\star \phi_{\mathbb{S}}=\phi_{\mathbb{S}}=\phi_{\mathbb{S}}\star \varphi\qquad (\varphi\in\mathbb{S}).$$
  This observation is via Van Daele's proof of the existence of the Haar state \cite{van} (ostensibly for the apparently esoteric and pathological non-separable case).   This observation yields new examples of (generally) non-Haar idempotent states in the case of quantum permutation groups: namely from the stabiliser quasi-subgroups of Section \ref{stab}. Pal sets, through their idempotent state, generate quasi-subgroups. Consider $S_3\leq S_4^+$ via $C(S_4^+)\to C(S_4^+)/\langle u_{44}=1\rangle$: this study yields the interesting example of an intermediate quasi-subgroup
  $$S_3\subsetneq (S_4^+)_4\subsetneq S_4^+.$$ Where $h$ is the Haar state on $C(S_4^+)$, the (non-Haar) idempotent in $(S_4^+)_4$ is given by:
   $$h_4(f)=\frac{h(u_{44}fu_{44})}{h(u_{44})}\qquad (f\in C(S_4^+)).$$
   This quasi-subgroup shares many properties of the state space of $C(S_3)$, namely it is closed under convolution, closed under reverses ((5.1), \cite{mcc}), and contains an identity for the convolution (i.e. the counit). Moreover, if any quantum permutation $\varphi\in (S_4^+)_4$ is measured with $u_{44}\in C(S_4^+)$ (in the sense of the Gelfand--Birkhoff picture), it gives one with probability one (i.e. it fixes label four). However, while it contains states non-zero on the commutator ideal of $C(S_4^+)$, this isn't a quantum permutation group on three labels because $(S_4^+)_4$ is not closed under wave-function collapse (the null-space of $h_4$ is one-sided).

\bigskip

A famous open problem in the theory of quantum permutation groups is the maximality conjecture: that the classical permutation group $S_N\leq S_N^+$ is a maximal quantum subgroup. This work undertakes some general analysis for the support projections of characters on algebras of continuous functions on quantum permutation groups.   Following on from Section 6.3 of \cite{mcc}, the current work considers the possibility of an \emph{exotic} intermediate quasi-subgroup strictly between the classical and quantum permutation groups. An attack on the maximality conjecture via such methods is not \emph{a priori} particularly promising, but some basic analysis of the support projections of the characters might be useful in the future.  This analysis shows that the support projection of the Haar idempotent $h_{S_N}$ associated with $S_N\leq S_N^+$ is a group-like projection in the bidual. One consequence of this is Theorem \ref{idemtotty} which says that  $h_{S_N}$ and  \emph{any} genuinely quantum permutation generates a quasi-subgroup strictly bigger than $S_N$, i.e. an idempotent state between  $h_{S_N}$ and the Haar state on $C(S_N^+)$. It isn't  $h_{S_N}$, but it could be (1) a non-Haar idempotent; or, for some $N\geq 6$, (2) the Haar idempotent from an exotic quantum subgroup  $S_N\leq \mathbb{G}_N\leq S_N^+$; or (3) the Haar state on $C(S_N^+)$. If it is always (3), a strictly stronger statement than the maximality conjecture, then the maximality conjecture holds.

\bigskip

Using the Gelfand--Birkhoff picture, this particular analysis allows us to consider the (classically) random and truly quantum parts of a quantum permutation, and there are some basic rules governing the convolution of (classically) random quantum permutations and truly quantum permutations. Some consequences of these are explored: for example, an idempotent state on $C(S_N^+)$ is either random, or ``less than half'' random (Corollary \ref{idempot2}).

\bigskip

The paper is organised as follows. Section 1 introduces  compact quantum groups, and discusses Van Daele's proof of the existence of the Haar state. Key in this work is the restriction to universal algebras of continuous functions (and the bidual von Neumann algebra), and the reasons for this restriction are explained. A further restriction to quantum permutation groups is made, and finally some elementary properties of the bidual are summarised. Section 2 introduces Pal sets, and asserts that they contain idempotent states. Quasi-subgroups are defined to fix the non-injectivity of the association of a Pal set to its idempotent state. The definition of a group-like projection is extended to group-like projections in the bidual, and the interplay between such group-like projections and idempotent states is explored. Wave-function collapse is defined, and the question of stability of a quasi-subgroup under wave-function collapse studied. In Section 3, stabiliser quasi-subgroups are defined, and it is shown that there is a strictly intermediate quasi-subgroup between $S_{N-1}^+\leq S_N^+$ and $S_N^+$. In Section 4, exotic quasi-subgroups of $S_N^+$ are considered (and by extension exotic quantum subgroups). Necessarily this section talks about the classical version of a quantum permutation group. The support projections of characters are studied, and it is proved that the sum of these is a group-like projection in the bidual. In the case of $S_N^+$, this group-like projection is used to define the (classically) random and truly quantum parts of a quantum permutation, and it is proven that the Haar idempotent coming from $S_N\leq S_N^+$ together with a quantum permutation with non-zero truly quantum part generates a non-classical quasi-subgroup in $S_N^+$ that is strictly bigger than $S_N$ (but possibly equal to $S_N^+$). In Section 5 the convolution of random and truly quantum permutations is considered, and as a corollary a number of quantitative and qualitative results around the random and truly quantum parts of convolutions. In Section 6 there is a brief study of the number of fixed points of a quantum permutation, and it is shown that as a corollary of never having an integer number of fixed points, the Haar state is truly quantum.

\section{Compact quantum groups}
\subsection{Definition and the Haar state}
\begin{definition}
\textbf{An} \emph{algebra of continuous functions on a ($\mathrm{C}^*$-algebraic) compact quantum group} $\mathbb{G}$ is a $\mathrm{C}^*$-algebra $C(\mathbb{G})$ with unit $\mathds{1}_{\mathbb{G}}$ together with a unital $\ast$-homomorphism  $\Delta:C(\mathbb{G})\to C(\mathbb{G})\otimes C(\mathbb{G})$ into the minimal tensor product that satisfies coassociativity and \emph{Baaj--Skandalis cancellation}:
 $$\overline{\Delta(C(\mathbb{G}))(\mathds{1}_{\mathbb{G}}\otimes C(\mathbb{G}))}=\overline{\Delta(C(\mathbb{G}))(C(\mathbb{G})\otimes \mathds{1}_{\mathbb{G}})}=C(\mathbb{G})\otimes C(\mathbb{G}).$$
\end{definition}
Woronowicz defined compact matrix quantum groups \cite{wo1}, and extended this definition to compact quantum groups \cite{wo3}. In order to establish the existence of a Haar state, Theorem \ref{THQIH} below, Woronowicz assumed that the algebra of functions was separable. Shortly afterwards Van Daele removed this condition \cite{van}, and established the existence of a Haar state in the non-separable case. The quantum groups in the current work are compact matrix quantum groups, which are separable, however Van Daele's proof will be teased out  and then adapted in Section \ref{palquasi}. Note that while Lemmas \ref{van2.1} and \ref{van2.2} are attributed here to Van Daele, it is pointed out by Van Daele that the techniques of their proofs  were largely present in the work of Woronowicz.

\bigskip

Define the convolution of states $\varphi_1,\,\varphi_2$ on $C(\mathbb{G})$:
$$\varphi_1\star \varphi_2:=(\varphi_1\otimes\varphi_2)\Delta.$$

\begin{theorem}[\cite{van,wo3}]\label{THQIH}
The algebra of continuous functions $C(\mathbb{G})$ on a compact quantum group admits a unique invariant state $h$, such that for all states $\varphi$ on $C(\mathbb{G})$:
$$h\star \varphi=h=\varphi\star h.$$
\end{theorem}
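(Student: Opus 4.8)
The plan is to prove existence and uniqueness separately, following the structure of Van Daele's argument. For \textbf{existence}, I would exploit the fact (stated in the introduction, and central to this paper's ``Pal set'' philosophy) that the entire state space $\mathcal{S}(C(\mathbb{G}))$ is itself a non-empty weak-* compact convolution-closed convex set. Indeed, $\mathcal{S}(C(\mathbb{G}))$ is weak-* compact by Banach--Alaoglu, convex by definition, and closed under $\star$ because $\varphi_1\star\varphi_2=(\varphi_1\otimes\varphi_2)\Delta$ is again a state (it is positive and unital, as $\Delta$ is a unital $\ast$-homomorphism). The construction I would use is the averaging/net argument: starting from any state $\omega$, consider the Ces\`{a}ro-type averages of iterated convolution powers, or more robustly the averages $\frac{1}{n}\sum_{k=1}^{n}\omega^{\star k}$, and extract a weak-* cluster point $h$. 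The key structural input making this work is \textbf{Baaj--Skandalis cancellation}, which guarantees the density conditions that force a cluster point to be genuinely bi-invariant rather than merely one-sided invariant.

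For the invariance itself, the heart of the matter is the following mechanism. I would first establish the auxiliary fact that if $\phi$ is a weak-* limit of convolution averages of $\omega$, then $\omega\star\phi=\phi$ and $\phi\star\omega=\phi$; this is the standard telescoping estimate, since $\omega\star\left(\frac{1}{n}\sum_{k=1}^{n}\omega^{\star k}\right)$ differs from $\frac{1}{n}\sum_{k=1}^{n}\omega^{\star k}$ by the boundary terms $\frac{1}{n}(\omega^{\star(n+1)}-\omega^{\star 1})$, which vanish in norm. The subtle step is upgrading invariance under a \emph{single} state $\omega$ to invariance under \emph{all} states $\varphi$. This is precisely where the cancellation property is indispensable: one shows, using the density $\overline{\Delta(C(\mathbb{G}))(\mathds{1}_{\mathbb{G}}\otimes C(\mathbb{G}))}=C(\mathbb{G})\otimes C(\mathbb{G})$, that the relation $\omega\star h=h$ for the specific averaged $\omega$ propagates to $\varphi\star h=h$ for every state $\varphi$. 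Concretely, invariance is first proven for the convolution-idempotent $h$ against a dense subset of slice functionals, and the density of $\Delta(C(\mathbb{G}))(\mathds{1}\otimes C(\mathbb{G}))$ lets one replace the single test state by an arbitrary one.

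For \textbf{uniqueness}, the argument is short and purely formal once existence is in hand. Suppose $h$ and $h'$ both satisfy $\psi\star h=h=h\star\psi$ and $\psi\star h'=h'=h'\star\psi$ for all states $\psi$. Then, taking $\psi=h'$ in the first family and $\psi=h$ in the second, I compute $h=h'\star h$ and $h'=h\star h'$. Since convolution of states is built from the comultiplication via $(\,\cdot\,\otimes\,\cdot\,)\Delta$, and associativity of $\star$ follows from coassociativity of $\Delta$, one has $h'\star h=h\star h'$ by the bi-invariance (both equal $h$ and both equal $h'$), whence $h=h'\star h=h\star h'=h'$. This step requires no analysis, only the algebraic identities $h\star h'=h$ and $h'\star h=h'$ read off from the two invariance conditions.

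I expect the \textbf{main obstacle} to be the existence half, and within it the passage from one-sided to two-sided invariance under all states. Producing a cluster point is routine given weak-* compactness, and showing it is idempotent and $\omega$-invariant is a clean telescoping argument; but verifying that this averaged state is invariant against every state $\varphi$, rather than only against $\omega$ and its convolution powers, is exactly the content Van Daele had to extract from Woronowicz's techniques, and it rests entirely on Baaj--Skandalis cancellation. In the non-separable setting one must also take care that the relevant averaging is carried out over a net (or that the cluster-point extraction does not secretly use sequential compactness), but since the quantum permutation groups of interest here are separable, this refinement can be noted and then set aside.
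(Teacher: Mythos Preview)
Your uniqueness argument is correct, and the Ces\`{a}ro/telescoping construction producing a state invariant under a \emph{single} $\omega$ is exactly Lemma~\ref{van2.1}. The gap is in your upgrade step. The sentence ``the density of $\Delta(C(\mathbb{G}))(\mathds{1}\otimes C(\mathbb{G}))$ lets one replace the single test state by an arbitrary one'' is not a proof: invariance of $h$ under $\omega$ (and hence under its convolution powers and their convex combinations) gives no control over $\varphi\star h$ for an unrelated state $\varphi$, and density of $\Delta(C(\mathbb{G}))(\mathds{1}\otimes C(\mathbb{G}))$ in $C(\mathbb{G})\otimes C(\mathbb{G})$ does not translate into any kind of density of one state's convolution orbit inside the state space. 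As written, your argument would equally well ``prove'' that any idempotent state is the Haar state.

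The paper's (i.e.\ Van Daele's) mechanism for this step is quite different from what you sketch. The key technical input is Lemma~\ref{van2.2}: if $\varphi\star\phi=\phi$ and $0\leq\rho\leq\varphi$, then $\rho\star\phi=\rho(\mathds{1}_{\mathbb{G}})\phi$. Baaj--Skandalis cancellation is used in the proof of \emph{this} domination lemma, not directly in the globalisation. With Lemma~\ref{van2.2} in hand, one defines for each positive linear functional $\omega$ the weak-$*$ closed set $K_\omega=\{\varphi\in\mathcal{S}(C(\mathbb{G})):\omega\star\varphi=\omega(\mathds{1}_{\mathbb{G}})\varphi\}$; Lemma~\ref{van2.1} shows each $K_\omega$ is non-empty, and Lemma~\ref{van2.2} gives $K_{\omega_1+\omega_2}\subseteq K_{\omega_1}\cap K_{\omega_2}$. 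A finite-intersection-property argument against the compact state space then produces a state in $\bigcap_\omega K_\omega$, which is left-invariant under every state. The missing idea in your proposal is precisely this: the passage from one-state invariance to full invariance goes through the \emph{order} structure on positive functionals (domination) combined with compactness, not through any density statement about slice functionals.
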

\begin{lemma}[Lemma 2.1, \cite{van}]\label{van2.1}
Let $\varphi$ be a state on $C(\mathbb{G})$. There exists a state $\phi_{\varphi}$ on $C(\mathbb{G})$ such that
$$\varphi\star\phi_{\varphi}=\phi_{\varphi}=\varphi\star\phi_{\varphi}.$$
\end{lemma}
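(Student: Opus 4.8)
The plan is to realise $\phi_\varphi$ as a weak-* cluster point of the Ces\`aro averages of the convolution powers of $\varphi$. Writing $\varphi^{\star 1}=\varphi$ and $\varphi^{\star(k+1)}=\varphi\star\varphi^{\star k}$, I would form
$$\varphi_n=\frac{1}{n}\sum_{k=1}^{n}\varphi^{\star k}.$$
Each $\varphi_n$ is a convex combination of states, hence itself a state. Since $C(\mathbb{G})$ is unital, its state space $\mathcal{S}(C(\mathbb{G}))$ is weak-* compact by Banach--Alaoglu, so the sequence $(\varphi_n)$ admits a weak-* cluster point $\phi_\varphi\in\mathcal{S}(C(\mathbb{G}))$; working with a subnet $(\varphi_{n_i})$ rather than a subsequence is what keeps the argument valid in the non-separable case that motivates Van Daele's proof.

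The engine of the proof is the asymptotic invariance of these averages. Using associativity of the convolution, a telescoping of the powers gives
$$\varphi\star\varphi_n-\varphi_n=\frac{1}{n}\bigl(\varphi^{\star(n+1)}-\varphi\bigr),$$
and identically $\varphi_n\star\varphi-\varphi_n=\tfrac{1}{n}(\varphi^{\star(n+1)}-\varphi)$ on the other side. As every $\varphi^{\star k}$ is a state, both right-hand sides have norm at most $2/n$, so $\|\varphi\star\varphi_n-\varphi_n\|\to 0$ and $\|\varphi_n\star\varphi-\varphi_n\|\to 0$. The \emph{same} averaging sequence is therefore asymptotically absorbed by $\varphi$ on both the left and the right, which is what will deliver the two-sided conclusion from a single cluster point.

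To pass to the limit I would invoke the weak-* continuity of one-sided convolution, namely that $\psi\mapsto\varphi\star\psi$ and $\psi\mapsto\psi\star\varphi$ are weak-* continuous on $\mathcal{S}(C(\mathbb{G}))$. From the definition $\varphi\star\psi=(\varphi\otimes\psi)\Delta$, this reduces to checking that for fixed $a\in C(\mathbb{G})$ the functional $\psi\mapsto(\varphi\otimes\psi)\Delta(a)$ is weak-* continuous; approximating $\Delta(a)$ in the minimal tensor product by sums of elementary tensors $\sum b_j\otimes c_j$ reduces this in turn to the evident continuity of $\psi\mapsto\psi(c_j)$. Granting this, along the subnet $\varphi_{n_i}\to\phi_\varphi$ one has $\varphi\star\varphi_{n_i}\to\varphi\star\phi_\varphi$ while the norm estimate forces $\varphi\star\varphi_{n_i}-\varphi_{n_i}\to 0$ weak-*; subtracting yields $\varphi\star\phi_\varphi=\phi_\varphi$, and symmetrically $\phi_\varphi\star\varphi=\phi_\varphi$.

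I expect the one genuinely delicate point to be this limit-interchange step: the convolution is defined through the minimal tensor product, so one must justify that a weak-* cluster point of asymptotically invariant means is truly invariant rather than only approximately so, which is exactly where the slice-map (elementary-tensor approximation) argument is needed and where a naive term-by-term passage to the limit could fail. By contrast the averaging and telescoping are routine, and the uniform bound $2/n$ makes the norm-convergence immediate; the entire substance of the argument lies in securing weak-* continuity of convolution against a fixed state.
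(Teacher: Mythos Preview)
Your proposal is correct and follows essentially the same route as the paper: form the Ces\`aro means $\varphi_n=\tfrac{1}{n}\sum_{k=1}^n\varphi^{\star k}$, extract a weak-* cluster point by compactness of the state space, and verify $\varphi$-invariance of that cluster point. The paper's version is terser (it simply cites Van Daele for the invariance step), whereas you have spelled out the telescoping estimate and the slice-map/weak-* continuity argument that underlie it; these details are exactly the ones Van Daele supplies, so there is no substantive divergence.
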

\begin{proof}
  Define
  $$\varphi_n=\frac{1}{n}(\varphi+\varphi^{\star 2}+\cdots+\varphi^{\star n}).$$
  As the state space $\mathcal{S}(C(\mathbb{G}))$ is convex and closed under convolution, $(\varphi_n)_{n\geq 1}\subset\mathcal{S}(C(\mathbb{G}))$. Via the weak-* compactness of the state space, Van Daele shows that $\phi_{\varphi}$, a weak-* limit point of $(\varphi_n)_{n\geq 1}$, is $\varphi$-invariant.
\end{proof}
\begin{lemma}[Lemma 2.2, \cite{van}]\label{van2.2}
  Let $\varphi$ and $\phi$ be states on $C(\mathbb{G})$ such that $\varphi\star\phi=\phi$. If $\rho\in C(\mathbb{G})^*$ and $0\leq \rho\leq \varphi$, then also $\rho\star\phi=\rho(\mathds{1}_{\mathbb{G}})\phi$.
\end{lemma}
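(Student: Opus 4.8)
The plan is to prove the one-sided estimate $\rho\star\phi\le\rho(\ind_{\mathbb G})\,\phi$ and then upgrade it to equality. Writing $\sigma:=\varphi-\rho\ge 0$, both $\rho\star\phi$ and $\sigma\star\phi$ are positive (convolutions of positive functionals, since $\Delta$ is a $\ast$-homomorphism and the tensor of positive functionals is positive), they sum to $\varphi\star\phi=\phi$, and $\rho(\ind_{\mathbb G})+\sigma(\ind_{\mathbb G})=1$. Hence if I can establish $\rho\star\phi\le\rho(\ind_{\mathbb G})\phi$ for \emph{every} $0\le\rho\le\varphi$, then applying the same bound to $\sigma$ and adding gives $\phi=\rho\star\phi+\sigma\star\phi\le(\rho(\ind_{\mathbb G})+\sigma(\ind_{\mathbb G}))\phi=\phi$, which forces both inequalities to be equalities; in particular $\rho\star\phi=\rho(\ind_{\mathbb G})\phi$, as claimed.

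To analyse the inequality I would pass to the GNS triples $(\mathcal H_\phi,\pi_\phi,\xi_\phi)$ and $(\mathcal H_\varphi,\pi_\varphi,\xi_\varphi)$, writing $\Lambda_\phi(a)=\pi_\phi(a)\xi_\phi$, and define $T\colon\mathcal H_\phi\to\mathcal H_\varphi\otimes\mathcal H_\phi$ by $T\Lambda_\phi(a)=(\pi_\varphi\otimes\pi_\phi)(\Delta(a))(\xi_\varphi\otimes\xi_\phi)$. The identity $\|T\Lambda_\phi(a)\|^2=(\varphi\otimes\phi)(\Delta(a^{*} a))=(\varphi\star\phi)(a^{*} a)=\phi(a^{*} a)=\|\Lambda_\phi(a)\|^2$ shows $T$ is an isometry, and this is precisely where the invariance hypothesis $\varphi\star\phi=\phi$ enters; moreover $T$ intertwines $\pi_\phi$ with $(\pi_\varphi\otimes\pi_\phi)\circ\Delta$. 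Since $0\le\rho\le\varphi$, the Radon--Nikodym theorem for positive functionals supplies $R\in\pi_\varphi(C(\mathbb G))'$ with $0\le R\le 1$ and $\rho(a)=\langle\xi_\varphi,R\,\pi_\varphi(a)\xi_\varphi\rangle$. A direct calculation then yields $\langle\Lambda_\phi(a),\,T^{*}(R\otimes 1)T\,\Lambda_\phi(a)\rangle=(\rho\star\phi)(a^{*} a)$, so the desired inequality becomes the single operator inequality $T^{*}(R\otimes 1)T\le\rho(\ind_{\mathbb G})\,1$ on $\mathcal H_\phi$. Because $R$ lies in the commutant $\pi_\varphi(C(\mathbb G))'$, the intertwining property places $B_R:=T^{*}(R\otimes 1)T$ in $\pi_\phi(C(\mathbb G))'$, and one checks $0\le B_R\le 1$ with $\langle\xi_\phi,B_R\xi_\phi\rangle=\rho(\ind_{\mathbb G})$.

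The heart of the matter, and the step I expect to be the main obstacle, is showing that this compression is the scalar $\rho(\ind_{\mathbb G})\,1$ (equivalently $B_R\le\rho(\ind_{\mathbb G})\,1$). The soft estimates coming from the Kadison--Schwarz inequality only give $B_R\le 1$, i.e. the weaker $\rho\star\phi\le\phi$, and iterating the convolution merely improves the constant to $\rho(\ind_{\mathbb G})^{1/2}$; the sharp scalar value is a genuine ergodicity phenomenon. It is the operator-algebraic incarnation of the classical fact that a $\varphi$-stationary probability measure is invariant under the closed subgroup generated by the support of $\varphi$, and I would force the scalarness using the defining Baaj--Skandalis cancellation $\overline{\Delta(C(\mathbb G))(\ind_{\mathbb G}\otimes C(\mathbb G))}=C(\mathbb G)\otimes C(\mathbb G)$, which makes the range of $T$ large enough that the compression of $\pi_\varphi(C(\mathbb G))'\otimes 1$ collapses onto the scalars. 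Once scalarness, or even just the inequality $B_R\le\rho(\ind_{\mathbb G})\,1$, is in hand, the complementary argument of the first paragraph closes the proof.
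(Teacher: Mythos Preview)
The paper does not give its own proof of this lemma; it records the statement and cites Van Daele. Your outline is in fact Van Daele's: form the GNS isometry $T:\mathcal H_\phi\to\mathcal H_\varphi\otimes\mathcal H_\phi$ from the invariance $\varphi\star\phi=\phi$, realise $\rho$ via a Radon--Nikodym operator $R\in\pi_\varphi(C(\mathbb G))'$, and reduce to showing that $B_R:=T^{*}(R\otimes 1)T\in\pi_\phi(C(\mathbb G))'$ is the scalar $\rho(\ind_{\mathbb G})$. The complementary device in your first paragraph (apply the same bound to $\varphi-\rho$ and add) is correct and would indeed promote a one-sided inequality to equality.

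The gap is that you have not carried out the decisive step. Saying Baaj--Skandalis makes ``the range of $T$ large enough that the compression collapses to scalars'' is a heuristic, not an argument: $T$ is a proper isometry in general, $\pi_\varphi(C(\mathbb G))'$ can be large, and there is no abstract principle forcing $T^{*}(R\otimes 1)T$ to be scalar merely because $R$ lies in that commutant. What cancellation actually hands you is that the vectors $(1\otimes\pi_\phi(b))\,T\Lambda_\phi(a)$ are total in $\mathcal H_\varphi\otimes\mathcal H_\phi$; turning that into the scalarness of $B_R$ requires a concrete computation exploiting the intertwining of $T$, and this computation is the entire content of Van Daele's lemma. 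Your own diagnosis confirms this: Kadison--Schwarz alone gives only $B_R\le 1$, and your iteration remark does not close the gap either (note that $\varphi\star(\rho\star\phi)=(\varphi\star\rho)\star\phi$ need not equal $\rho\star\phi$, so the naive iteration is not available). As it stands, the proposal correctly identifies the right framework and the right tool, but stops precisely where the real work begins.
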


\begin{proof}[Proof of Theorem \ref{THQIH}]
  Where $\mathcal{S}(C(\mathbb{G}))$ is the state space of $C(\mathbb{G})$, for each positive linear functional $\omega$ on $C(\mathbb{G})$, define:
  $$K_{\omega}:=\{\varphi\in\mathcal{S}(C(\mathbb{G}))\,\colon\,\omega\star\varphi=\omega(\mathds{1}_{\mathbb{G}})\varphi\}.$$
  As per Van Daele, $K_\omega$ is closed and thus compact with respect to the weak-* topology. It is non-empty because $\omega$ can be normalised to a state $\widehat{\omega}$ on $C(\mathbb{G})$, and by Lemma \ref{van2.1}, there exists $\phi_\omega\in K_{\widehat{\omega}}$ and thus $\phi_\omega\in K_\omega$.

  \bigskip

  Let $\phi\in K_{\omega_1+\omega_2}$. Note that both $\omega_1,\omega_2\leq \omega_1+\omega_2$, and so by Lemma \ref{van2.2}, $\phi\in K_{\omega_1}\cap K_{\omega_2}$ so that:
  $$K_{\omega_1+\omega_2}\subseteq K_{\omega_1}\cap K_{\omega_2}.$$
  Assume that the intersection of the $K_\omega$ over the positive linear functionals on $C(\mathbb{G})$ is empty. Thus, where the complement is with respect to $\mathcal{S}(C(\mathbb{G}))$:
  $$\bigcup_{\omega\text{ pos. lin. func.}}K_\omega^c=\mathcal{S}(C(\mathbb{G})),$$
  is an open cover of a compact set, and thus admits a finite subcover $\{K_{\omega_i}^c\colon i=1,\dots,n\}$ such that
  $$\bigcup_{i=1}^n K_{\omega_i}^c=\mathcal{S}(C(\mathbb{G}))\implies \bigcap_{i=1}^nK_{\omega_i}=\emptyset.$$
  Let $\psi=\sum_{i=1}^n\omega_i$: the set $K_\psi$ is non-empty. It is also a subset of:
  $$ \bigcap_{i=1}^nK_{\omega_i}=\emptyset,$$
  an absurdity, and so the intersection of all the $K_\omega$ is non-empty, and thus there is a state $h$ that is left-invariant for all positive linear functionals and thus for $\mathcal{S}(C(\mathbb{G}))$.
\end{proof}
\subsection{The universal and reduced versions}
A reference for this section is Timmermann \cite{tim}. A compact quantum group $\mathbb{G}$ has a dense Hopf $\ast$-algebra of regular functions, $\mathcal{O}(\mathbb{G})$. The algebra of regular functions has a minimal $\mathrm{C}^*$-norm completion, the reduced algebra of continuous functions, $C_{\text{r}}(\mathbb{G})$, the image of the GNS representation associated to the Haar state; and a maximal $\mathrm{C}^*$-norm completion, the universal algebra of continuous functions, $C_{\text{u}}(\mathbb{G})$. The compact quantum group $\mathbb{G}$ is \emph{coamenable} if $\mathcal{O}(\mathbb{G})$ has a unique $\mathrm{C}^*$-norm completion to an algebra of continuous functions on a compact quantum group, and so in particular $C_{\text{r}}(\mathbb{G})\cong C_{\text{u}}(\mathbb{G})$. The Haar state is faithful on $\mathcal{O}(\mathbb{G})$ and $C_{\text{r}}(\mathbb{G})$, but $C_{\text{r}}(\mathbb{G})$ does not in general admit a character. On the other hand, $C_{\text{u}}(\mathbb{G})$ does admit a character, but the Haar state is no longer faithful in general.

\bigskip

After an abelianisation $\pi_{\text{ab}}:C(\mathbb{G})\to C(\mathbb{G})/N_{\text{ab}}$, and via Gelfand's theorem, the algebra of continuous functions on the \emph{classical version} of a compact quantum group is  given by the algebra of continuous function on the set of characters. However, not  every completion $C_\alpha(\mathbb{G})$ of $\mathcal{O}(\mathbb{G})$ admits a classical version: in particular, when $\mathbb{G}$ is not coamenable the abelianisation of $C_{\text{r}}(\mathbb{G})$ is zero, and $C_{\text{r}}(\mathbb{G})$ admits no characters. This work includes a study of the classical versions of  quantum permutation groups $\mathbb{G}\leq S_N^+$, and working at the universal level ensures that talking about the classical version $G\leq \mathbb{G}$ makes sense.

      \bigskip

The quantum subgroup relation $\mathbb{H}\leq \mathbb{G}$ is given at the universal level: a quantum subgroup is given by a surjective $\ast$-homomorphism $\pi: C_{\text{u}}(\mathbb{G})\to C_{\text{u}}(\mathbb{H})$ that respects the comultiplication in the sense that:
$$\Delta_{C_{\text{u}}(\mathbb{H})}\circ\pi=(\pi\otimes \pi)\circ \Delta.$$
Every such morphism of algebras of continuous function  $C_{\text{u}}(\mathbb{G})\to C_{\text{u}}(\mathbb{H})$ restricts to a  morphism on the level of regular functions $\mathcal{O}(\mathbb{G})\to \mathcal{O}(\mathbb{H})$; and every  morphism $\mathcal{O}(\mathbb{G})\to \mathcal{O}(\mathbb{H})$ extends to the level of universal algebras of continuous functions \cite{bmt}.

\bigskip

Key in this work is the notion of a \emph{quasi-subgroup} $\mathbb{S}_{\phi}\subseteq \mathcal{S}(C_{\text{u}}(\mathbb{G}))$, defined as the set of states $\varphi$  that are absorbed by a given idempotent state $\phi$ on $C_{\text{u}}(\mathbb{G})$:
        $$\varphi\star \phi=\phi=\phi\star \varphi.$$
       If $h_{\mathbb{H}}:=h_{C_\alpha (\mathbb{H})}\circ \pi$ is a Haar idempotent associated with $\pi:C(\mathbb{G})\to C_\alpha(\mathbb{H})$, it is the case that
$$\{\varphi\circ \pi\,\colon\,\varphi\in\mathcal{S}(C_{\alpha}(\mathbb{H}))\}\subseteq \mathbb{S}_{h_{\mathbb{H}}}.$$
\begin{remark}\label{Vaes}
As explained by Stefaan Vaes  \cite{Vaes}, in general this is not an equality. In particular the Haar state of $C_{\text{r}}(\mathbb{G})$ in $C_{\text{u}}(\mathbb{G})$,
$$h_{\text{r}}:=h_{C_{\text{r}}(\mathbb{G})}\circ \pi_{\text{r}},$$
is in fact equal to the Haar state on $C_{\text{u}}(\mathbb{G})$. Thus the quasi-subgroup generated by $h_{\text{r}}$ is the whole state space of $C_{\text{u}}(\mathbb{G})$, but in the non-coamenable case there are states on $C_{\text{u}}(\mathbb{G})$, such as the counit, that do not factor through $\pi_r$, and thus in this case:
$$\{\varphi\circ \pi_{\text{r}}\,\colon\,\varphi\in \mathcal{S}(C_{\text{r}}(\mathbb{G}))\}\subsetneq \mathbb{S}_{h_{\text{r}}}.$$
 Vaes goes on to prove that in the universal case of $\pi: C(\mathbb{G})\to C_{\text{u}}(\mathbb{H})$ that indeed:
\begin{equation}\{\varphi\circ \pi\,\colon\,\varphi\in\mathbb{H}\}=\mathbb{S}_{h_{\mathbb{H}}},\label{universalHaar}\end{equation}
and this is more satisfactory for a theory of quasi-subgroups.  Note that Vaes's observation yields Theorem \ref{absorb} as a special case. It is believed that (\ref{universalHaar}) is not in the literature, however as its proof requires representation theory, not used in the current work, Vaes's proof is left to an appendix.
\end{remark}

\textbf{From this point on, all algebras of continuous functions will be assumed universal,} $C(\mathbb{G})\cong C_{\text{u}}(\mathbb{G})$. Careful readers can extract results which hold more generally.

\subsection{Quantum Permutation Groups}
Let $C(\mathbb{X})$ be a $\mathrm{C}^*$-algebra with unit $\mathds{1}_{\mathbb{X}}$. A (finite) \emph{partition of unity} is a (finite) set of projections $\{p_i\}_{i=1}^N\subset C(\mathbb{X})$ that sum to the identity:
$$\sum_{i=1}^Np_i=\mathds{1}_{\mathbb{X}}.$$
\begin{definition}
A \emph{magic unitary} is a matrix $u\in M_N(C(\mathbb{X}))$ such that the rows and columns are partitions of unity:
$$\sum_{k=1}^Nu_{ik}=\mathds{1}_{\mathbb{X}}=\sum_{k=1}^Nu_{kj}\qquad (1\leq i,j\leq N).$$
\end{definition}
Consider the universal unital $\mathrm{C}^*$-algebra:
$$C(S_N^+):=\mathrm{C}^*(u_{ij}\,\colon\, u \text{ an $N\times N$ magic unitary}).$$
Define
\begin{equation}\Delta(u_{ij})=\sum_{k=1}^Nu_{ik}\otimes u_{kj}.\label{rep}\end{equation}
Using the universal property, Wang \cite{wa2} shows that $\Delta$ is a $\ast$-homomorphism, and $S_N^+$ is a compact quantum group, called \emph{the} quantum permutation group on $N$ symbols. Note $S_N^+$ is not coamenable for $N\geq 5$ \cite{ba3}.
\begin{definition}
 Let $\mathbb{G}$ be a compact quantum group. A magic unitary $u\in M_N(C(\mathbb{G}))$  whose entries generate $C(\mathbb{G})$ as a $\mathrm{C}^{*}$-algebra, and such that $\Delta(u_{ij})$ is given by (\ref{rep}), is called a \emph{magic fundamental representation}. A compact quantum group that admits such a \emph{magic fundamental representation} is known as a quantum permutation group, and by the universal property $\mathbb{G}\leq S_N^+$.
\end{definition}

The relation $\mathbb{G}\leq S_N^+$ yields a specific fundamental magic representation $u\in M_N(C(\mathbb{G}))$, and whether $u_{ij}$ is a generator of $C(\mathbb{G})$ or of $C(S_N^+)$ should be clear from  context. \textbf{From this point on, all quantum groups $\mathbb{G}$ will be assumed to be quantum permutations groups $\mathbb{G}\leq S_N^+$}. Again, careful readers can extract results which hold more generally.

\bigskip

The antipode is given by:
$$S(u_{ij})=u_{ji}\implies S^2(u_{ij})=u_{ij},$$
that is quantum permutation groups are of  Kac type.
\begin{proposition}\label{reverse}
Let $\varphi_1,\varphi_2$ be states on $C(\mathbb{G})$:
$$(\varphi_1\star\varphi_2)\circ S=(\varphi_2\circ S)\star (\varphi_1\circ S).$$
\end{proposition}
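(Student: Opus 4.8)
The plan is to reduce the identity to the single most basic structural property of the antipode, namely that it reverses the comultiplication. On the dense Hopf $\ast$-algebra of regular functions $\Ol(\mathbb{G})$ one has
$$\Delta\circ S=(S\otimes S)\circ\tau\circ\Delta,$$
where $\tau(a\otimes b)=b\otimes a$ is the flip. Since both sides of the claimed equality are bounded functionals on $C(\mathbb{G})$ and $\Ol(\mathbb{G})$ is dense, it suffices to verify the equality on $\Ol(\mathbb{G})$, where this anti-comultiplicativity is available. One can also avoid citing it by checking it directly on the generators: $\Delta(S(u_{ij}))=\Delta(u_{ji})=\sum_k u_{jk}\otimes u_{ki}$, which indeed equals $(S\otimes S)\tau\Delta(u_{ij})$, and then extending by the fact that $\Delta$ is a homomorphism while $S$ is an anti-homomorphism.

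Next I would compute both sides in Sweedler notation $\Delta(a)=\sum a_{(1)}\otimes a_{(2)}$. For $a\in\Ol(\mathbb{G})$, the displayed identity gives
$$(\varphi_1\star\varphi_2)(S(a))=(\varphi_1\otimes\varphi_2)\big(\Delta(S(a))\big)=\sum\varphi_1\big(S(a_{(2)})\big)\,\varphi_2\big(S(a_{(1)})\big),$$
whereas, directly from the definition of convolution,
$$\big((\varphi_2\circ S)\star(\varphi_1\circ S)\big)(a)=\sum\varphi_2\big(S(a_{(1)})\big)\,\varphi_1\big(S(a_{(2)})\big).$$
Since the two scalars commute, these expressions coincide term by term. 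As both functionals are continuous and agree on the dense subalgebra $\Ol(\mathbb{G})$, they agree on all of $C(\mathbb{G})$.

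I do not expect any serious obstacle: the content is entirely formal, resting only on the anti-comultiplicativity of $S$ and the commutativity of scalar multiplication. The one point requiring care is that $S$ is in general only densely defined, so that the compositions $\varphi_i\circ S$ and the continuity argument need justification. This is precisely where the Kac type hypothesis recorded above ($S^2=\mathrm{id}$) enters: the antipode is then a bounded $\ast$-anti-automorphism of $C(\mathbb{G})$ with $S(a^*)=S(a)^*$, so $\varphi\circ S$ is again a state and all the functionals in the statement are genuinely defined and continuous on $C(\mathbb{G})$.
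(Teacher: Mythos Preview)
Your proof is correct and follows essentially the same route as the paper's: use the anti-comultiplicativity $\Delta\circ S=(S\otimes S)\circ\tau\circ\Delta$ on $\Ol(\mathbb{G})$ to verify the identity there, then extend to $C(\mathbb{G})$ by continuity using the boundedness of $S$ (Kac type) and of $\Delta$. Your write-up is in fact more explicit than the paper's, which merely records the antipodal identity, asserts the equality on $\Ol(\mathbb{G})$, and appeals to norm-continuity.
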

\begin{proof}
Where $\tau$ is the flip, $f\otimes g\mapsto g\otimes f$, in $\mathcal{O}(\mathbb{G})$:
$$\Delta\circ S=(S\otimes S)\circ \tau\circ \Delta.$$
If $f\in\mathcal{O}(\mathbb{G})$, then using the antipodal property
\begin{align*}
  ((\varphi_1\star\varphi_2)\circ S)(f) & =((\varphi_2\circ S)\star (\varphi_1\circ S))(f).
\end{align*}
The same holds for all $f\in C(\mathbb{G})$ because the antipode is bounded, and the comultiplication is a $\ast$-homomorphism, and thus both are norm-continuous.
\end{proof}

\begin{lemma}[Section 3, \cite{frs}]\label{idemS}
If a state $\phi$ on $C(\mathbb{G})$ is idempotent, $\phi\star\phi=\phi$, then $\phi\circ S=\phi$.
\end{lemma}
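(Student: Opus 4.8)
The plan is to reduce the claim to the observation that $\phi\circ S$ is again idempotent and then to identify it with $\phi$. First I would check that $\psi:=\phi\circ S$ is a state. Because $\mathbb{G}\subseteq S_N^+$ is of Kac type, the antipode is a bounded $\ast$-anti-automorphism with $S^2=\operatorname{id}$, so $S^{-1}=S$ and $S(a^\ast)=S(a)^\ast$; hence $\psi(a^\ast a)=\phi\bigl(S(a)S(a)^\ast\bigr)\geq 0$ and $\psi(\ind_{\mathbb{G}})=1$, so $\psi\in\mathcal{S}(C(\mathbb{G}))$. Applying Proposition \ref{reverse} to $\phi\star\phi=\phi$ then gives $\psi=(\phi\star\phi)\circ S=(\phi\circ S)\star(\phi\circ S)=\psi\star\psi$, so $\psi$ is idempotent. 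The task is therefore to prove the equality $\phi=\psi$ of two idempotent states.

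To upgrade ``both idempotent'' to ``equal'' I would reformulate idempotency through the unital completely positive map $E_\phi:=(\operatorname{id}\otimes\phi)\Delta$. Coassociativity and $\phi\star\phi=\phi$ give $E_\phi^2=E_\phi$ and $\phi\circ E_\phi=\phi\star\phi=\phi$, so $E_\phi$ is an idempotent completely positive map fixed by $\phi$. Using the identity $\Delta\circ S=(S\otimes S)\circ\tau\circ\Delta$ from the proof of Proposition \ref{reverse} together with $S^2=\operatorname{id}$, a short Sweedler-notation computation yields the intertwining $S\circ E_\phi\circ S=(\psi\otimes\operatorname{id})\Delta$, relating the left averaging by $\phi$ to the right averaging by $\psi$. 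I would then pass to the bidual $C(\mathbb{G})^{\ast\ast}$, where the Haar state extends to a trace (Kac type) satisfying $h\circ S=h$, and argue at the level of the support projection $p$ of $\phi$: the trace-invariance and antipode-invariance of $h$, applied to the intertwining above, should show that $S(p)$ is the support projection of an idempotent absorbing the same states as $\phi$, whence $S(p)=p$ and $\phi\circ S=\phi$.

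The main obstacle is precisely this last equality. Proposition \ref{reverse} is completely symmetric in $\phi$ and $\psi$, so the reverse-convolution identity alone can never distinguish them and can only certify that both are idempotent; breaking this symmetry requires genuine structural input. The most promising is the theory of group-like support projections in the bidual developed later in this work, combined with the traciality and $S$-invariance of the Haar state that are special to the Kac type setting, as these are what pin the support projection of an idempotent down uniquely and render it invariant under the unitary antipode. As a consistency check, on the magic fundamental representation the statement reduces to the matrix $\phi(u)$ being symmetric, and indeed $\phi(u)$ is a doubly stochastic idempotent matrix; but this finite-dimensional computation controls $\phi$ only on $u$ and not on the products that generate $C(\mathbb{G})$, which is exactly why the operator-algebraic argument on the support projection is needed.
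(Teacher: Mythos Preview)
The paper does not supply its own proof; the lemma is quoted from \cite{frs}. Your reduction is correct as far as it goes: in the Kac setting $\psi:=\phi\circ S$ is a state, and Proposition~\ref{reverse} gives $\psi\star\psi=\psi$. The gap is exactly where you locate it, and the repair you propose does not close it. Invoking the group-like support-projection material from later in the paper is unavailable for a general idempotent: Corollary~\ref{conversefail} shows that the support projection of an idempotent state need \emph{not} be group-like---already the Haar state on $C(S_N^+)$, $N\geq 5$, fails this---so the hypothesis your sketch rests on is simply absent. Even granting a group-like $p_\phi$ and an identity $S^{**}(p_\phi)=p_\psi$, two further steps are owed: that $p_\phi=p_\psi$, and that equal support projections force equal states. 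Neither is argued, and the second is false without additional input (distinct states routinely share a support projection). The sentence ``should show that $S(p)$ is the support projection of an idempotent absorbing the same states as $\phi$'' is an assertion rather than an argument, and ``absorbing the same states as $\phi$'' is tantamount to $\mathbb{S}_\phi=\mathbb{S}_\psi$, which is essentially the conclusion. Nothing in the outline breaks the $\phi\leftrightarrow\psi$ symmetry you correctly diagnose.

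What is actually needed is a direct convolution-level relation between $\phi$ and $\psi$, not a detour through support projections. In the Kac case the decisive structure is that $E_\phi=(\mathrm{id}\otimes\phi)\Delta$ is an $h$-preserving idempotent, hence a conditional expectation onto a $\ast$-subalgebra on which $\phi$ behaves like a Haar-type functional; your (correct) intertwining $S\,E_\phi\,S=(\psi\otimes\mathrm{id})\Delta$ is then fed into \emph{that} structure to compare $\phi$ and $\psi$ directly, rather than into the bidual. Your ingredients point the right way, but the support-projection route is a dead end here.
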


\subsection{The Bidual}\label{bid}
In the sequel the \emph{bidual} $C(\mathbb{X})^{**}$ of a unital $\mathrm{C}^{*}$-algebra $C(\mathbb{X})$ will be used. Here some of its properties are summarised from Takesaki, Vol. I. \cite{ta1}. The bidual admits $C(\mathbb{X})^{*}$ as a predual, and so is a von Neumann algebra. States $\varphi$ on $C(\mathbb{X})$ have extensions to states $\omega_\varphi$ on $C(\mathbb{X})^{**}$.   The \emph{support projection} $p_\varphi\in C(\mathbb{X})^{**}$ of a state $\varphi$ on $C(\mathbb{X})$ is the smallest projection $p$ such that $\omega_\varphi(p)=1$: if $\omega_\varphi(p)=1$ then $\varphi$ is said to be \emph{supported on $p$}, and   $p_\varphi\leq p$. It has the property that:
$$\varphi(f)=\omega_{\varphi}(fp_\varphi)=\omega_{\varphi}(p_\varphi f)=\omega_{\varphi}(p_\varphi fp_\varphi)\qquad (f\in C(\mathbb{X})).$$
If $N\subseteq C(\mathbb{X})$ is an ideal, then $N^{**}\subseteq C(\mathbb{X})^{**}$ is $\sigma$-weakly closed, and so equal to $C(\mathbb{X})^{**}q$ for a central projection $q\in C(\mathbb{X})^{**}$. Then, as $\mathrm{C}^*$-algebras:
\begin{equation}
C(\mathbb{X})^{**}\cong\left(C(\mathbb{X})/N\right)^{**}\oplus N^{**}.\label{directsum}
\end{equation}

The embedding $C(\mathbb{X})\subseteq \mathbb{C}(\mathbb{X})^{**}$ is an isometry, so that $C(\mathbb{X})$ is norm closed, and the norm closure of a norm dense $\ast$-subalgebra $\mathcal{O}(\mathbb{X})\subseteq C(\mathbb{X})$ in $C(\mathbb{X})^{**}$ is $C(\mathbb{X})$. In addition, the $\sigma$-weak closures of $\mathcal{O}(\mathbb{X})$ and $C(\mathbb{X})$ are both $C(\mathbb{X})^{**}$. A $\ast$-homomorphism $T:C(\mathbb{X})\to C(\mathbb{Y})$ extends to a $\sigma$-weakly continuous $\ast$-homomorphism:
$$T^{**}:C(\mathbb{X})^{**}\to C(\mathbb{Y})^{**}.$$
In particular, the extension of a character on $C(\mathbb{X})$ is a character on $C(\mathbb{X})^{**}$. The product on the bidual is separately $\sigma$-weakly continuous:
$$\left(\lim_\lambda f_\lambda\right)f=\lim_\lambda(f_\lambda f)\qquad (f_\lambda,f\in C(\mathbb{X})^{**}).$$

Via the Sherman--Takeda Theorem \cite{she,tak}, projections $p_1,\dots,p_N\in C(\mathbb{X})$ may be viewed as Hilbert space projections. Then
\begin{equation}\lim_{n\to \infty}[(p_1\cdots p_N)^n]=p_1\wedge\cdots\wedge p_N,\label{altern}\end{equation}
strongly \cite{hal}. The powers of products of projections are in the unit ball. The strong and $\sigma$-strong topologies coincide on the unit ball, and $\sigma$-strong convergence implies $\sigma$-weak convergence of (\ref{altern}). Finally, for any Borel set $E\subseteq \sigma(f)$ of self-adjoint $f\in C(\mathbb{X})$, the spectral projection $\mathds{1}_E(f)\in C(\mathbb{X})^{**}$.

\section{Pal sets and quasi-subgroups\label{palquasi}}
\subsection{Pal sets}
The following notation/terminology is outlined in \cite{mcc} and used hereafter:
\begin{definition}\label{gelbir}
Given a quantum permutation group $\mathbb{G}$,
the \emph{Gelfand--Birkhoff picture} interprets elements of the state-space as quantum permutations, so that $\varphi\in \mathbb{G}$ means $\varphi$ is a state on $C(\mathbb{G})$, and $\mathbb{S}\subseteq \mathbb{G}$ denotes a subset of the state space $\mathcal{S}(C(\mathbb{G}))$.
\end{definition}

\begin{definition}
A subset $\mathbb{S}\subseteq \mathbb{G}$ is \emph{closed under convolution} if
$$\varphi,\rho\in \mathbb{S}\implies \varphi\star\rho\in\mathbb{S};$$
it is \emph{closed under reverses} if
$$\varphi\in\mathbb{S}\implies (\varphi\circ S)\in\mathbb{S};$$
it \emph{contains the identity} if $C(\mathbb{G})$ admits a counit $\varepsilon$, and $\varepsilon\in\mathbb{S}$.
\end{definition}
\begin{proposition}
Suppose that $\pi:C(\mathbb{G})\to C(\mathbb{H})$ gives a (closed) quantum subgroup $\mathbb{H}\leq \mathbb{G}$. Then the set:
$$\mathbb{H}^{\leq \mathbb{G}}:=\{\varphi\circ \pi\,\colon\,\varphi\in \mathbb{H}\},$$
is closed under convolution, closed under reverses, and contains the identity.
\end{proposition}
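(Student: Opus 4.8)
The plan is to reduce all three closure properties to the compatibility of $\pi$ with the Hopf structure maps of $\mathbb{G}$ and $\mathbb{H}$. Beyond the defining hypothesis $\Delta_{C(\mathbb{H})}\circ\pi=(\pi\otimes\pi)\circ\Delta$, I would invoke the two companion intertwinings $\varepsilon_{\mathbb{H}}\circ\pi=\varepsilon_{\mathbb{G}}$ and $\pi\circ S=S_{\mathbb{H}}\circ\pi$, where $\varepsilon$ and $S$ denote the counits and antipodes of the two quantum groups. Granting these, each of the three assertions becomes essentially a one-line computation.

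For closure under convolution, I would take states $\varphi_1,\varphi_2$ on $C(\mathbb{H})$ and compute
$$(\varphi_1\circ\pi)\star(\varphi_2\circ\pi)=(\varphi_1\otimes\varphi_2)\circ(\pi\otimes\pi)\circ\Delta=(\varphi_1\otimes\varphi_2)\circ\Delta_{C(\mathbb{H})}\circ\pi=(\varphi_1\star\varphi_2)\circ\pi,$$
which lies in $\mathbb{H}^{\subseteq\mathbb{G}}$ because the state space of $C(\mathbb{H})$ is itself closed under convolution. For the identity, since everything is at the universal level both counits exist as characters; the intertwining $\varepsilon_{\mathbb{H}}\circ\pi=\varepsilon_{\mathbb{G}}$ then exhibits the convolution identity $\varepsilon_{\mathbb{G}}$ as $\varepsilon_{\mathbb{H}}\circ\pi$ with $\varepsilon_{\mathbb{H}}$ a state on $C(\mathbb{H})$, so $\varepsilon_{\mathbb{G}}\in\mathbb{H}^{\subseteq\mathbb{G}}$. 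For closure under reverses, I would write
$$(\varphi\circ\pi)\circ S=\varphi\circ(\pi\circ S)=\varphi\circ(S_{\mathbb{H}}\circ\pi)=(\varphi\circ S_{\mathbb{H}})\circ\pi,$$
and then note that since $\mathbb{H}\subseteq S_N^+$ is again of Kac type, $S_{\mathbb{H}}$ is a unit-preserving $\ast$-anti-automorphism with $S_{\mathbb{H}}(a^*a)=S_{\mathbb{H}}(a)S_{\mathbb{H}}(a)^*\geq 0$, so $\varphi\circ S_{\mathbb{H}}$ is a state on $C(\mathbb{H})$ and the reverse again lands in $\mathbb{H}^{\subseteq\mathbb{G}}$.

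The only real work is in the two auxiliary intertwinings. I would establish them by passing to the dense Hopf $\ast$-algebras: the excerpt records that every such $\pi$ restricts to a morphism $\mathcal{O}(\mathbb{G})\to\mathcal{O}(\mathbb{H})$, and a bialgebra morphism that intertwines the coproducts automatically intertwines the counit and the antipode, by their uniqueness on a Hopf algebra. Thus $\varepsilon_{\mathbb{H}}\circ\pi=\varepsilon_{\mathbb{G}}$ and $\pi\circ S=S_{\mathbb{H}}\circ\pi$ hold on $\mathcal{O}(\mathbb{G})$. The step I expect to be the main, though still routine, obstacle is the extension from $\mathcal{O}(\mathbb{G})$ to all of $C(\mathbb{G})$: for the antipode identity this is legitimate because in the Kac-type case $S$ and $S_{\mathbb{H}}$ are bounded (as already used in the proof of Proposition \ref{reverse}) and $\pi$ is contractive, so both sides of $\pi\circ S=S_{\mathbb{H}}\circ\pi$ are norm-continuous and agree on a norm-dense subalgebra; the same density argument handles the counit identity, the counit being a character and hence contractive. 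Bundled with the verification that the resulting functionals are genuinely states, this continuity bookkeeping is the one place demanding care.
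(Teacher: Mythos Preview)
Your proof is correct. The paper states this proposition without proof, treating it as a routine consequence of the compatibility of $\pi$ with the Hopf structure; your argument is precisely the standard verification one would supply, and the care you take over extending the counit and antipode intertwinings from $\mathcal{O}(\mathbb{G})$ to $C(\mathbb{G})$ via boundedness in the Kac setting is appropriate.
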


There are subsets $\mathbb{S}\subset \mathbb{G}$ that are closed under convolution, closed under reverses, and contain the identity that are \emph{not} associated with quantum subgroups in this way.

\begin{example}
Let $\Gamma$ be a finite group with a non-normal subgroup $\Lambda\leq \Gamma$. The state space of $C(\widehat{\Gamma})$, denoted here $\widehat{\Gamma}$, is the set of positive-definite functions on $\Gamma$. Define:
\begin{equation}\mathbb{S}_{\Lambda}=\{\varphi\in\widehat{\Gamma}:\varphi(\lambda)=1\text{ for all }\lambda\in \Lambda\}.\label{SLAM}\end{equation}
The convolution for states on $C(\widehat{\Gamma})$ is pointwise multiplication, therefore $\mathbb{S}_{\Lambda}$ is closed under convolution. The reverse of $\varphi\in \widehat{\Gamma}$ is:
$$(\varphi\circ S)(\gamma)=\varphi(\gamma^{-1}),$$
and $\Lambda$ is a group so $\mathbb{S}_{\Lambda}$ is closed under reverses. The identity, $\mathds{1}_{\Gamma}\in \mathbb{S}_{\Lambda}$.
\end{example}
\begin{example}\label{KacP}(\cite{mcc})
Let $G_0$ be the Kac--Paljutkin quantum group with algebra of functions
$$C(G_0)=\mathbb{C}f_1\oplus \mathbb{C}f_2\oplus\mathbb{C}f_3\oplus\mathbb{C}f_4\oplus M_2(\mathbb{C}).$$
Where $f^i$ is dual to  $f_i$, and $E^{ij}$ is dual to the matrix unit $E_{ij}$ in the $M_2(\mathbb{C})$ factor, the convex hulls $\operatorname{co}(\{f^1,f^4,E^{11}\})$ and $\operatorname{co}(\{f^1,f^4,E^{22}\})$ are closed under convolution, under reverses, and contain the identity, $\varepsilon=f^1$.
\end{example}
\begin{example}
Let $\mathbb{G}$ be a  quantum permutation group with $u_{ii}\in C(\mathbb{G})$ non-central. Define a subset $\mathbb{G}_i\subset \mathbb{G}$ by:
$$\mathbb{G}_i:=\{\varphi\in \mathbb{G}\,:\,\varphi(u_{ii})=1\}.$$
This set is closed under convolution, and closed under reverses because $S(u_{ii})=u_{ii}$. Finally $\varepsilon\in\mathbb{G}_i$ as $\varepsilon(u_{ij})=\delta_{i,j}$. More in Section  \ref{stab}.
\end{example}
\begin{definition}\label{Pal}
 A \emph{Pal set} is a non-empty convex weak-* closed subset $\mathbb{S}\subseteq \mathbb{G}$ that is closed under convolution.
\end{definition}

\begin{theorem}\label{THQI}
A Pal set $\mathbb{S}\subseteq\mathbb{G}$ contains a unique $\mathbb{S}$-invariant state, $\phi_{\mathbb{S}}\in\mathbb{S}$, such that for all $\varphi\in\mathbb{S}$:
$$\phi_{\mathbb{S}}\star \varphi=\phi_{\mathbb{S}}=\varphi\star\phi_{\mathbb{S}}.$$.
\end{theorem}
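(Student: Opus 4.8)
The plan is to adapt Van Daele's proof of Theorem \ref{THQIH}, running his compactness argument \emph{inside} the Pal set $\mathbb{S}$ rather than over the whole state space. Since $\mathbb{S}$ is weak-* closed inside the weak-* compact state space $\mathcal{S}(C(\mathbb{G}))$, it is itself weak-* compact, and by convexity together with closure under convolution it plays exactly the role $\mathcal{S}(C(\mathbb{G}))$ plays in the original argument. For each $\varphi\in\mathbb{S}$ I would set
$$K_\varphi:=\{\psi\in\mathbb{S}\,\colon\,\varphi\star\psi=\psi\text{ and }\psi\star\varphi=\psi\},$$
and aim to show $\bigcap_{\varphi\in\mathbb{S}}K_\varphi\neq\emptyset$; any element of this intersection is an $\mathbb{S}$-invariant state lying in $\mathbb{S}$.

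First I would check each $K_\varphi$ is weak-* closed, hence compact. This is immediate from the identities $(\varphi\star\psi)(f)=\psi((\varphi\otimes\mathrm{id})\Delta(f))$ and $(\psi\star\varphi)(f)=\psi((\mathrm{id}\otimes\varphi)\Delta(f))$, which exhibit $\psi\mapsto\varphi\star\psi$ and $\psi\mapsto\psi\star\varphi$ as weak-* continuous (each is evaluation of $\psi$ against a fixed element of $C(\mathbb{G})$). Next I would verify $K_\varphi\neq\emptyset$: the Cesàro averages $\varphi_n=\tfrac1n(\varphi+\cdots+\varphi^{\star n})$ of Lemma \ref{van2.1} all lie in $\mathbb{S}$ (they are convex combinations of convolution powers of $\varphi$), so their weak-* limit point $\phi_\varphi$ lies in $\mathbb{S}$; and since $\varphi\star\varphi_n-\varphi_n=\tfrac1n(\varphi^{\star(n+1)}-\varphi)=\varphi_n\star\varphi-\varphi_n\to 0$ in norm, this limit point is invariant under $\varphi$ on \emph{both} sides, so $\phi_\varphi\in K_\varphi$.

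The heart of the argument is the finite intersection property. Given $\varphi_1,\dots,\varphi_n\in\mathbb{S}$, put $\varphi=\tfrac1n\sum_i\varphi_i\in\mathbb{S}$ and choose $\psi=\phi_\varphi\in K_\varphi$ as above. For each $i$ one has $0\leq\rho\leq\varphi$ with $\rho=\tfrac1n\varphi_i$ and $\rho(\ind_{\mathbb{G}})=\tfrac1n$. From $\varphi\star\psi=\psi$, Lemma \ref{van2.2} gives $\rho\star\psi=\tfrac1n\psi$, i.e. $\varphi_i\star\psi=\psi$; the right-hand statement $\psi\star\varphi_i=\psi$ follows the same way from $\psi\star\varphi=\psi$ and the right-handed analogue of Lemma \ref{van2.2}. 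Thus $\psi\in\bigcap_i K_{\varphi_i}$, and weak-* compactness of $\mathbb{S}$ then yields a point $\phi_{\mathbb{S}}\in\bigcap_{\varphi\in\mathbb{S}}K_\varphi$, a two-sided $\mathbb{S}$-invariant state.

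The step I expect to need the most care is securing this right-handed version of Lemma \ref{van2.2}, since the paper records only the left one and a Pal set need not be closed under reverses. I would obtain it without leaving $C(\mathbb{G})^*$ by transporting through the antipode: if $\psi\star\varphi=\psi$ then $(\varphi\circ S)\star(\psi\circ S)=\psi\circ S$ by Proposition \ref{reverse}, and since $S$ is a $\ast$-anti-automorphism with $S^2=\mathrm{id}$ (Kac type) the inequality $0\leq\rho\leq\varphi$ is preserved as $0\leq\rho\circ S\leq\varphi\circ S$; applying Lemma \ref{van2.2} and then Proposition \ref{reverse} once more, and cancelling the injective map $(-)\circ S$, returns $\psi\star\rho=\rho(\ind_{\mathbb{G}})\psi$. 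Finally, uniqueness is routine: if $\phi_1,\phi_2\in\mathbb{S}$ are both $\mathbb{S}$-invariant then, evaluating the invariance relations at one another, $\phi_1=\phi_1\star\phi_2=\phi_2$.
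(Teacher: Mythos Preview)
Your argument is correct and follows essentially the same approach as the paper, which simply says to run Van Daele's proof of Theorem~\ref{THQIH} with $K_\omega\cap\mathbb{S}$ for $\omega\in\operatorname{cone}(\mathbb{S})$ in place of $K_\omega$. The only visible difference is packaging: you index by states in $\mathbb{S}$ and build two-sided invariance into $K_\varphi$ from the outset, deriving the right-handed analogue of Lemma~\ref{van2.2} via the antipode (legitimate here since quantum permutation groups are Kac type), whereas the paper's one-sided $K_\omega$ yields left $\mathbb{S}$-invariance first and leaves the symmetric half implicit.
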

\begin{proof}
This has exactly the same proof as Theorem \ref{THQIH}, except rather than considering a $K_\omega$ for each positive linear functional $\omega$ on $C(\mathbb{G})$, the proof uses for each $\omega\in\operatorname{cone}(\mathbb{S})$ the closed set $K_\omega\cap \mathbb{S}$.
\end{proof}

The strength of the notion of a Pal set is that, as will be seen in Section \ref{stab}, they can be easy to describe, and yield idempotent states with certain properties. The problem with Definition \ref{Pal}  is that Pal sets are not in general sub-objects, not state-spaces of algebras of continuous functions on a compact quantum group. It is possible to talk about compact quantum \emph{hypergroups} in this setting \cite{frs,fst,lvd}, but this avenue will not be pursued here. Furthermore, the correspondence $\mathbb{S}\to \phi_{\mathbb{S}}$ is not one-to-one. For example,  the Pal set $\mathbb{H}^{\leq \mathbb{G}}$ yields the Haar idempotent $h_{\mathbb{H}}\circ \pi$. The singleton $\{h_{\mathbb{H}}\circ \pi\}$ is  a Pal set with the same idempotent $h_{\mathbb{H}}\circ \pi$.

\bigskip

Another such non-correspondence occurs for the Pal set of central states:
\begin{definition}
Where
$$\{u^{\alpha}_{ij}\,\colon\,i,j=1,\dots, d_\alpha,\,\alpha\in\operatorname{Irr}(\mathbb{G})\}$$
are matrix coefficients of mutually inequivalent irreducible unitary representations, a  state $\varphi\in\mathbb{G}$ is a \emph{central state} if for all $\alpha\in\operatorname{Irr}(\mathbb{G})$ there exists $\varphi(\alpha)\in\mathbb{C}$ such that:
$$\varphi(u^{\alpha}_{ij})=\varphi(\alpha)\delta_{i,j}.$$
Equivalently, a state $\varphi\in\mathbb{G}$ is a central state if it commutes with all states with respect to convolution:
$$\varphi\star\rho=\rho\star \varphi\qquad(\rho\in\mathbb{G}).$$
\end{definition}
\begin{proposition}
The set of central states is a Pal set with idempotent state $h\in \mathbb{G}$.
\end{proposition}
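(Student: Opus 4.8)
The plan is to check the four clauses of Definition \ref{Pal} for the set $\mathbb{S}$ of central states, and then to identify its distinguished idempotent with $h$ via Theorem \ref{THQI}. Three of the clauses are routine. The set is non-empty, since both the counit and the Haar state are central: unitarity of each representation forces $\varepsilon(u^\alpha_{ij})=\delta_{i,j}$, so $\varepsilon(\alpha)=1$ for every $\alpha$, while invariance of $h$ gives $h(u^\alpha_{ij})=0$ for non-trivial $\alpha$ and $h(\mathds{1}_{\mathbb{G}})=1$, so $h(\alpha)=\delta_{\alpha,\mathrm{triv}}$. Convexity is immediate because centrality is a linear condition: if $\varphi_0,\varphi_1$ are central then $t\varphi_0+(1-t)\varphi_1$ is central with value $t\varphi_0(\alpha)+(1-t)\varphi_1(\alpha)$. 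Weak-* closedness holds because $\mathbb{S}$ is cut out of the weak-* compact state space by the family of weak-* continuous conditions $\varphi(u^\alpha_{ij})=0$ $(i\neq j)$ and $\varphi(u^\alpha_{ii})=\varphi(u^\alpha_{jj})$ (over all $i,j$ and all $\alpha\in\operatorname{Irr}(\mathbb{G})$), each of which defines a weak-* closed set.

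The one substantive clause is closure under convolution, and this is where I would spend the effort. The comultiplication acts on the matrix coefficients of a unitary representation by $\Delta(u^\alpha_{ij})=\sum_k u^\alpha_{ik}\otimes u^\alpha_{kj}$, the representation-theoretic version of (\ref{rep}). Hence for central $\varphi_1,\varphi_2$ I would compute
\begin{align*}
(\varphi_1\star\varphi_2)(u^\alpha_{ij}) &= \sum_k \varphi_1(u^\alpha_{ik})\,\varphi_2(u^\alpha_{kj}) \\
&= \sum_k \varphi_1(\alpha)\delta_{i,k}\,\varphi_2(\alpha)\delta_{k,j} = \varphi_1(\alpha)\varphi_2(\alpha)\,\delta_{i,j}.
\end{align*}
Since $\varphi_1\star\varphi_2$ is automatically a state on $C(\mathbb{G})$, this shows it is central with value $\varphi_1(\alpha)\varphi_2(\alpha)$, so $\mathbb{S}$ is closed under convolution and is therefore a Pal set. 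I note in passing that the computation exhibits convolution of central states as pointwise multiplication of the scalars $\alpha\mapsto\varphi(\alpha)$.

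To identify $\phi_{\mathbb{S}}$, recall from Theorem \ref{THQI} that $\mathbb{S}$ contains a unique $\mathbb{S}$-invariant state. Since $h\in\mathbb{S}$ and, by Theorem \ref{THQIH}, $h\star\varphi=h=\varphi\star h$ for every state $\varphi$ and in particular for every $\varphi\in\mathbb{S}$, the Haar state $h$ is itself $\mathbb{S}$-invariant. The uniqueness clause then forces $\phi_{\mathbb{S}}=h$.

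The only point requiring care is the convolution computation, which I regard as the crux even though the work is ultimately light, reducing to the standard coproduct formula for matrix coefficients. The subtlety to watch is that centrality is a condition on $\mathcal{O}(\mathbb{G})$, where $\Delta$ takes the displayed form, and this is exactly the domain on which closure under convolution must be verified.
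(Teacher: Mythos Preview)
Your proof is correct. The paper states this proposition without proof, evidently regarding it as routine; your argument is exactly the expected one, verifying the Pal-set axioms directly from the coproduct formula $\Delta(u^\alpha_{ij})=\sum_k u^\alpha_{ik}\otimes u^\alpha_{kj}$ and then invoking the uniqueness clause of Theorem~\ref{THQI} together with $h\in\mathbb{S}$ to identify the idempotent.
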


In \cite{fre}, an $S_N^+$ analogue of the measure on $S_N$ constant on transpositions, a central state $\varphi_{\text{tr}}$ on $C(S_N^+)$, is studied, and it is shown that the convolution powers $(\varphi_{\text{tr}}^{\star k})_{k\geq 0}$ are a sequence of central states converging to the Haar state.

\subsection{Quasi-subgroups}

To fix the non-injectivity of the association of a Pal set $\mathbb{S}$ with an idempotent $\phi_{\mathbb{S}}$ is to define a  \emph{quasi-subgroup}. This nomenclature of \emph{quasi}-subgroup is inspired by Kasprzak and So{\l}tan \cite{kap}.

\begin{proposition}
Given an idempotent state $\phi\in \mathbb{G}$, the set:
  \begin{equation}
  \mathbb{S}_{\phi}:=\{\varphi\in\mathbb{G}\colon \varphi\star\phi=\phi=\phi\star \varphi\}\end{equation}
  is a Pal set   with idempotent state $\phi$.
\end{proposition}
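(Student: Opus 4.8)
The plan is to verify the three defining properties of a Pal set (Definition \ref{Pal}) for the set $\mathbb{S}_\phi$, namely that it is non-empty, convex, weak-* closed, and closed under convolution; and then to identify its invariant idempotent state (guaranteed by Theorem \ref{THQI}) with $\phi$ itself. Non-emptiness is immediate: since $\phi$ is idempotent, $\phi\star\phi=\phi=\phi\star\phi$, so $\phi\in\mathbb{S}_\phi$. Convexity follows because the convolution $\varphi\mapsto \varphi\star\phi=(\varphi\otimes\phi)\Delta$ is affine in $\varphi$ (and similarly $\varphi\mapsto\phi\star\varphi$), so if $\varphi_1,\varphi_2\in\mathbb{S}_\phi$ and $t\in[0,1]$, then $(t\varphi_1+(1-t)\varphi_2)\star\phi=t\phi+(1-t)\phi=\phi$, and the same on the other side.

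Next I would check weak-* closedness. The maps $\varphi\mapsto \varphi\star\phi$ and $\varphi\mapsto \phi\star\varphi$ are weak-* continuous on the state space: for fixed $f\in C(\mathbb{G})$ one has $(\varphi\star\phi)(f)=(\varphi\otimes\phi)\Delta(f)$, and since $\Delta(f)\in C(\mathbb{G})\otimes C(\mathbb{G})$ can be approximated by finite sums $\sum_k a_k\otimes b_k$, the functional $\varphi\mapsto(\varphi\star\phi)(f)=\sum_k\varphi(a_k)\phi(b_k)$ is a weak-* limit of weak-* continuous functionals, hence weak-* continuous (this is the standard separate-continuity argument for convolution). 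Therefore $\mathbb{S}_\phi$ is the intersection of the preimages of the single point $\phi$ under two weak-* continuous maps into the weak-* compact state space, and so is weak-* closed.

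The closure under convolution is the step requiring a little algebra, and I expect it to be the only non-formal point. Given $\varphi_1,\varphi_2\in\mathbb{S}_\phi$, I want $(\varphi_1\star\varphi_2)\star\phi=\phi=\phi\star(\varphi_1\star\varphi_2)$. Using coassociativity of $\Delta$, convolution of states is associative, so $(\varphi_1\star\varphi_2)\star\phi=\varphi_1\star(\varphi_2\star\phi)=\varphi_1\star\phi=\phi$, where the middle equality uses $\varphi_2\in\mathbb{S}_\phi$ and the last uses $\varphi_1\in\mathbb{S}_\phi$. Symmetrically, $\phi\star(\varphi_1\star\varphi_2)=(\phi\star\varphi_1)\star\varphi_2=\phi\star\varphi_2=\phi$. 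Thus $\varphi_1\star\varphi_2\in\mathbb{S}_\phi$, and $\mathbb{S}_\phi$ is a Pal set.

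Finally I would confirm that the distinguished idempotent $\phi_{\mathbb{S}_\phi}$ produced by Theorem \ref{THQI} is exactly $\phi$. By that theorem there is a unique $\mathbb{S}_\phi$-invariant state, characterised by $\phi_{\mathbb{S}_\phi}\star\varphi=\phi_{\mathbb{S}_\phi}=\varphi\star\phi_{\mathbb{S}_\phi}$ for every $\varphi\in\mathbb{S}_\phi$. But $\phi$ itself has precisely this absorbing property by the very definition of $\mathbb{S}_\phi$: every $\varphi\in\mathbb{S}_\phi$ satisfies $\varphi\star\phi=\phi=\phi\star\varphi$. By the uniqueness clause of Theorem \ref{THQI}, $\phi_{\mathbb{S}_\phi}=\phi$, which completes the proof. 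The only genuine content beyond bookkeeping is the associativity of convolution together with the weak-* continuity of one-sided convolution, both of which are standard consequences of coassociativity and of $\Delta$ landing in the minimal tensor product.
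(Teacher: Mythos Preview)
Your proof is correct and follows essentially the same approach as the paper's: associativity for closure under convolution, affinity for convexity, and a density/continuity argument for weak-* closedness; you additionally spell out non-emptiness and the identification $\phi_{\mathbb{S}_\phi}=\phi$, which the paper leaves implicit. One small phrasing point: in the weak-* closure step, ``weak-* limit of weak-* continuous functionals, hence weak-* continuous'' should be stated as a \emph{uniform} (norm) limit on the state space --- or, more cleanly, note that $(\varphi\star\phi)(f)=\varphi\bigl((I\otimes\phi)\Delta(f)\bigr)$ is evaluation at a fixed element of $C(\mathbb{G})$ --- since a mere pointwise weak-* limit need not preserve continuity.
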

\begin{proof}
 By associativity, $\mathbb{S}_{\phi}$ is closed under convolution. Convexity is straightforward. Weak-* closure follows from the fact that for $\rho\in \mathbb{G}$, the convolution operators $\varphi\mapsto \rho\star \varphi$ and $\varphi\mapsto \varphi\star \rho$ are weak-* continuous.
\end{proof}
\begin{definition}
A \emph{quasi-subgroup} is a subset of the state space of the form $\mathbb{S}_{\phi}$ for an idempotent state $\phi$ on $C(\mathbb{G})$; the \emph{quasi-subgroup generated by $\phi$.}
\end{definition}
The quasi-subgroup $\mathbb{S}_{\phi}$ is the largest Pal set $\mathbb{S}$ such that $\phi$ is the $\mathbb{S}$-invariant state in $\mathbb{S}$, and there is a one-to-one correspondence between quasi-subgroups and idempotent states.

\subsection{Group-like projections}
Group-like projections, and their link with idempotent states, were first noted by Landstad and Van Daele \cite{lvd}. This definition can be extended to the bidual:
\begin{definition}
A group-like projection $p\in C(\mathbb{G})^{**}$ is a non-zero projection such that:
$$\Delta^{**}(p)(\mathds{1}_{\mathbb{G}}\otimes p)=p\otimes p.$$
\end{definition}

 In the finite case, there is a bijective correspondence between idempotent states and group-like projections: every idempotent state has group-like density with respect to the Haar state \cite{frs} (and this group-like density coincides with the support projection \cite{mcc}). In the compact case, continuous group-like projections $p\in C(\mathbb{G})$ with $h(p)>0$ give densities to idempotent states via the Fourier transform, $p\mapsto h(\cdot p)/h(p)$, but the converse does not hold (see Section \ref{exotic} and Corollary \ref{Haartq}). However  it is shown here  that every group-like projection in the \emph{bidual} yields a Pal set, and thus an idempotent state, but as seen in Proposition \ref{conversefail} a converse statement does not hold.   In general, it can only be said that idempotent states are associated with group-like projections in the multiplier algebra of the dual discrete quantum group \cite{frs}. Furthermore this result has been generalised to the locally compact setting by Kasprzak and So{\l}tan \cite{kap}.

\bigskip

The language of wave-function collapse will be used to talk about idempotent states with group-like density, and later illustrate the difference between Haar and non-Haar idempotents:

\begin{definition}\label{condition}
Let $q\in C(\mathbb{G})^{**}$ be a projection and $\varphi\in \mathbb{G}$. If $\omega_{\varphi}(q)>0$, then  \emph{$\varphi$ conditioned by $q=1$} is given by:
$$\widetilde{q}\varphi(g):=\frac{\omega_\varphi(qg q)}{\omega_\varphi(q)}\qquad (g\in C(\mathbb{G})),$$
and  $\varphi\to \widetilde{q}\varphi$ is referred to as \emph{wave-function collapse}. Furthermore, say that a subset $\mathbb{S}\subseteq \mathbb{G}$ is \emph{stable under wave-function collapse} if for all projections $q\in C(\mathbb{G})^{**}$,
\begin{equation}(\varphi\in\mathbb{S}\text{ and }\omega_{\varphi}(q)>0)\implies \widetilde{q}\varphi\in \mathbb{S}.\label{wfc}\end{equation}
\end{definition}

The following is well known in the algebraic setting  (Prop. 1.8, \cite{lvd}), and a similar proof is known to work in the finite quantum group setting (Cor. 4.2, \cite{frs}). For the benefit of the reader, the proof is reproduced in the current setting:
\begin{proposition}\label{prop8.3}
If $p\in C(\mathbb{G})$ is a continuous group-like projection such that $h(p)>0$,  then $\widetilde{p}h\in\mathbb{G}$ is an idempotent state.
\end{proposition}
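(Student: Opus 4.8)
The plan is to exhibit $\widetilde{p}h$ simultaneously in a manifestly-positive form and in a manifestly-idempotent form, and then to reconcile the two using that the Haar state is a trace. By Definition \ref{condition}, $\widetilde{p}h(g)=\omega_h(pgp)/\omega_h(p)=h(pgp)/h(p)$ for $g\in C(\mathbb{G})$ (here $\omega_h$ restricts to $h$ on $C(\mathbb{G})\supseteq\{p,\,pgp\}$, and $\omega_h(p)=h(p)>0$). In this form positivity and unitality are immediate: $\widetilde{p}h(g^*g)=h((gp)^*(gp))/h(p)\geq 0$ since $h$ is positive, and $\widetilde{p}h(\ind_{\mathbb{G}})=h(p)/h(p)=1$; hence $\widetilde{p}h$ is a state, i.e.\ $\widetilde{p}h\in\mathbb{G}$. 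Because $\mathbb{G}$ is a quantum permutation group it is of Kac type, so $h$ is tracial, and therefore $h(pgp)=h\big(p(gp)\big)=h\big((gp)p\big)=h(gp^2)=h(gp)$. Thus $\widetilde{p}h$ coincides with the Fourier-transform functional
$$\psi_p(g):=\frac{h(gp)}{h(p)}\qquad(g\in C(\mathbb{G})),$$
and it suffices to prove $\psi_p\star\psi_p=\psi_p$.

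To establish idempotency I would compute, using that the slice maps $(h\otimes\operatorname{id})$ and $(\operatorname{id}\otimes h)$ are bounded,
$$\psi_p\star\psi_p(g)=(\psi_p\otimes\psi_p)\Delta(g)=\frac{1}{h(p)^2}\,(h\otimes h)\big[\Delta(g)(p\otimes p)\big].$$
The group-like identity $\Delta(p)(\ind_{\mathbb{G}}\otimes p)=p\otimes p$ (the restriction of the bidual definition to continuous $p$) and the homomorphism property of $\Delta$ let me rewrite $\Delta(g)(p\otimes p)=\Delta(g)\Delta(p)(\ind_{\mathbb{G}}\otimes p)=\Delta(gp)(\ind_{\mathbb{G}}\otimes p)$. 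Applying the first slice map, and then the invariance $(h\otimes\operatorname{id})\Delta(x)=h(x)\ind_{\mathbb{G}}$ of the Haar state, gives
$$(h\otimes\operatorname{id})\big[\Delta(gp)(\ind_{\mathbb{G}}\otimes p)\big]=\big[(h\otimes\operatorname{id})\Delta(gp)\big]\,p=h(gp)\,p,$$
where the first equality holds because right multiplication by $\ind_{\mathbb{G}}\otimes p$ only affects the second leg, on which $\operatorname{id}$ acts. A final application of $h$ yields $(h\otimes h)[\Delta(g)(p\otimes p)]=h(gp)h(p)$, so dividing by $h(p)^2$ returns $\psi_p(g)$. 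Hence $\psi_p\star\psi_p=\psi_p$, and $\widetilde{p}h=\psi_p$ is an idempotent state.

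The computation is essentially formal, so the points requiring care are analytic rather than algebraic. Since $p$ is continuous, $\Delta(p)\in C(\mathbb{G})\otimes C(\mathbb{G})$, the products $gp$ and $pgp$ remain in $C(\mathbb{G})$, and the invariance identity holds on all of $C(\mathbb{G})$ by norm-continuity, so phrasing everything through the bounded slice maps avoids any appeal to the dense Hopf $\ast$-algebra $\mathcal{O}(\mathbb{G})$. The genuine crux, however, is the identification $h(pgp)=h(gp)$: it is what lets the manifestly-positive expression for $\widetilde{p}h$ be matched with the manifestly-idempotent $\psi_p$. Here I would lean on the Kac-type hypothesis, i.e.\ the traciality of $h$; in a non-Kac setting one would instead have to derive the two-sided group-like relations and invoke the antipode, which is precisely the more delicate route sidestepped by the standing assumption $\mathbb{G}\subseteq S_N^+$.
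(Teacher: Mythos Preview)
Your proof is correct and follows essentially the same line as the paper's: both use traciality of $h$ to pass from $h(pgp)$ to $h(gp)$, the group-like relation to rewrite $\Delta(g)(p\otimes p)=\Delta(gp)(\ind_{\mathbb{G}}\otimes p)$, and Haar invariance $(h\otimes\operatorname{id})\Delta(x)=h(x)\ind_{\mathbb{G}}$ to finish. The only cosmetic difference is that the paper computes on $\mathcal{O}(\mathbb{G})$ in Sweedler notation and then extends by norm-continuity, whereas you work directly on $C(\mathbb{G})$ via bounded slice maps.
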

\begin{proof}Let $\phi=\widetilde{p}h$. The difference between $\omega_h$ and $h$ can be suppressed here as ${\omega_h}_{\left.\right|_{C(\mathbb{G})}}=h$. Let $f\in \mathcal{O}(\mathbb{G})$:
\begin{align*}
  (\phi\star\phi)(f) & =\frac{1}{h(p)^2}\sum h(pf_{(1)}p)h(pf_{(2)}p)
 =\frac{1}{h(p)^2}\sum h(f_{(1)}p)h(f_{(2)}p) \\
   & =\frac{1}{h(p)^2}(h\otimes h)\left(\Delta(f)(p\otimes p)\right)  =\frac{1}{h(p)^2}(h\otimes h)\left(\Delta(f)\Delta(p)(\mathds{1}_{\mathbb{G}}\otimes p)\right)
   \\&=\frac{1}{h(p)^2}(h\otimes h)\left(\Delta(fp)(\mathds{1}_{\mathbb{G}}\otimes p)\right)  =\frac{1}{h(p)^2}h(fp)h(p)=\frac{h(pfp)}{h(p)}=\phi(f),
\end{align*}
where the traciality of the Haar state, $p^2=p$, and $(h\otimes \varphi)(\Delta(f)(\mathds{1}_{\mathbb{G}}\otimes g))=h(f)\varphi(g)$ (Remark 2.2.2 i., \cite{tim}) were used. By norm-continuity this implies that $\widetilde{p}h$ is idempotent.
\end{proof}

\begin{proposition}\label{prop27}
Let $\phi=\widetilde{p}h$ be an idempotent state with $p\in \mathcal{O}(\mathbb{G})$ a group-like projection. Then
  $$\mathbb{S}_\phi=\{\varphi\in\mathbb{G}\,\colon\,\varphi(p)=1\}.$$
\end{proposition}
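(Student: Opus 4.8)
The plan is to prove the two inclusions separately, working throughout with $\phi(f)=h(pf)/h(p)=h(fp)/h(p)$ (the second equality by traciality of $h$, as in Proposition \ref{prop8.3}) and freely using that a state with $\varphi(p)=1$ satisfies $\varphi(\,\cdot\,)=\varphi(p\,\cdot\,)=\varphi(\,\cdot\,p)=\varphi(p\,\cdot\,p)$ by Cauchy--Schwarz. The two ingredients doing the work are the group-like relation $\Delta(p)(\ind_{\mathbb{G}}\otimes p)=p\otimes p$ (together with its adjoint $(\ind_{\mathbb{G}}\otimes p)\Delta(p)=p\otimes p$, valid since $p$ and $\Delta(p)$ are self-adjoint) and left invariance of the Haar state in the form $(h\otimes\iota)\Delta(g)=h(g)\ind_{\mathbb{G}}$.

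The inclusion $\mathbb{S}_\phi\subseteq\{\varphi\colon\varphi(p)=1\}$ is immediate. Evaluating the defining identity $\varphi\star\phi=\phi$ at $p$ and using $\phi(g)=h(gp)/h(p)$ gives $\varphi\star\phi(p)=\frac1{h(p)}(\varphi\otimes h)(\Delta(p)(\ind_{\mathbb{G}}\otimes p))=\frac1{h(p)}(\varphi\otimes h)(p\otimes p)=\varphi(p)$ by the group-like relation, whereas $\phi(p)=1$; since $h(p)>0$ this forces $\varphi(p)=1$.

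For the reverse inclusion, fix $\varphi$ with $\varphi(p)=1$. I would first establish $\phi\star\varphi=\phi$ directly: writing $\phi\star\varphi(f)=\frac1{h(p)}\sum h(pf_{(1)})\varphi(f_{(2)})$ and replacing $\varphi(f_{(2)})$ by $\varphi(pf_{(2)})$ yields $\frac1{h(p)}(h\otimes\varphi)\big((p\otimes p)\Delta(f)\big)$; then $p\otimes p=(\ind_{\mathbb{G}}\otimes p)\Delta(p)$ lets me rewrite $(p\otimes p)\Delta(f)=(\ind_{\mathbb{G}}\otimes p)\Delta(pf)$, and collapsing the first leg by left invariance produces $\frac1{h(p)}h(pf)\varphi(p)=\phi(f)$. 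This is the clean side precisely because the group-like relation places $p$ in the same leg that the Haar state integrates.

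The genuinely delicate step is the opposite absorption $\varphi\star\phi=\phi$, which I would obtain by reverse symmetry rather than by direct calculation (the direct route would instead require the companion relation $\Delta(p)(p\otimes\ind_{\mathbb{G}})=p\otimes p$). By Proposition \ref{reverse} and Lemma \ref{idemS}, $(\varphi\star\phi)\circ S=(\phi\circ S)\star(\varphi\circ S)=\phi\star(\varphi\circ S)$; if $(\varphi\circ S)(p)=1$, then the identity already proved gives $\phi\star(\varphi\circ S)=\phi$, whence $\varphi\star\phi=\phi\circ S=\phi$ after applying $S$ once more (using $S^2=\iota$ and $\phi\circ S=\phi$). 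Everything therefore reduces to $(\varphi\circ S)(p)=\varphi(S(p))=\varphi(p)$, i.e. to $S(p)=p$. I expect this to be the main obstacle: it follows formally from the Kac property ($h\circ S=h$, $S^2=\iota$) and $\phi\circ S=\phi$, which give $h\big(f(p-S(p))\big)=0$ for all $f\in C(\mathbb{G})$, but promoting this to the equality $S(p)=p$ needed to evaluate a general (possibly non-$C_{\mathrm{r}}(\mathbb{G})$-factoring) state $\varphi$ must be handled carefully in the universal setting, where $h$ is not faithful on $C_{\mathrm{u}}(\mathbb{G})$. Accordingly I would isolate $S(p)=p$ as a preliminary lemma on the $S$-invariance of group-like projections.
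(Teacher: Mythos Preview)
Your argument is correct and in fact more careful than the paper's: the paper only explicitly establishes the equivalence $\varphi(p)=1\Leftrightarrow\phi\star\varphi=\phi$ and never addresses the other absorption $\varphi\star\phi=\phi$ (it simply writes ``so that $\varphi\in\mathbb{S}_\phi$'' as though one-sided absorption were enough). Your Steps~1 and~2 are exactly right; the issue is only the closing Step~3.

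Your worry about $S(p)=p$ in the universal setting is legitimate---the route through $h(f(p-S(p)))=0$ does not obviously give $p=S(p)$ when $h$ fails to be faithful on $C_{\mathrm u}(\mathbb{G})$---but it can be bypassed entirely by bootstrapping with what you already have. From $\varphi(p)=1$ your Step~2 gives $\phi\star\varphi=\phi$; composing with $S$ and using Proposition~\ref{reverse} together with $\phi\circ S=\phi$ (Lemma~\ref{idemS}) yields $(\varphi\circ S)\star\phi=\phi$. Now apply your Step~1, whose proof only used the relation $\psi\star\phi=\phi$ (not two-sided membership in $\mathbb{S}_\phi$), to the state $\psi=\varphi\circ S$: this gives $(\varphi\circ S)(p)=1$. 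Feed that back into Step~2 to obtain $\phi\star(\varphi\circ S)=\phi$, and one more application of $\circ\,S$ gives $\varphi\star\phi=\phi$. So $S(p)=p$ is not needed as an input; it is rather a by-product, since you have shown that $\varphi(p)=1\Leftrightarrow\varphi(S(p))=1$ for every state, and a projection in a $\Cstar$-algebra is determined by the face of states taking value~$1$ on it.
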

\begin{proof}
Suppose that $\varphi(p)=1$.
Similarly to the proof of Proposition \ref{prop8.3}, for $f\in \mathcal{O}(\mathbb{G})$:
    \begin{equation}(\phi\star \varphi)(f)=\frac{1}{h(p)}(h\otimes \varphi)(\Delta(fp_{\phi})(\mathds{1}_{\mathbb{G}}\otimes p_{\phi}))=\frac{h(fp_{\phi})}{h(p)}\varphi(p)=\phi(f),\label{eq1}\end{equation}
    and by weak-* continuity, $\phi\star \varphi=\phi$. To show $\varphi\star \phi=\phi$, conduct a similar calculation, using $\Delta(p)(p\otimes \mathds{1}_{\mathbb{G}})=p\otimes p$ (Prop. 1.6, \cite{lvd}), and $(\varphi\otimes h)(\Delta(f)(g\otimes \mathds{1}_{\mathbb{G}}))=\varphi(g)h(f)$ (Remark 2.2.2 i., \cite{tim}).

    \bigskip

     On the other hand, suppose that $\varphi\in \mathbb{S}_\phi$, so that $\phi\star\varphi=\phi$.
Considering the equality of the first and third quantities of (\ref{eq1})  at $f=p$, with the existence of $\widetilde{p}h$ implying $h(p)>0$:
   \begin{align*}
     (\phi\star \varphi)(p)=\frac{h(pp_\phi)}{h(p)}\varphi(p)=\phi(p_\phi)\varphi(p)=\varphi(p).
   \end{align*}
If $\phi\star \varphi=\phi$, then $\varphi(p)=\phi(p)=1$.
   \end{proof}

\begin{proposition}\label{support}
  If states $\varphi_1,\varphi_2$ on $C(\mathbb{G})$   are supported on a group-like projection $p\in C(\mathbb{G})^{**}$, then so is $\varphi_1\star \varphi_2$.
\end{proposition}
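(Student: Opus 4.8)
The plan is to prove the equivalent statement that the normal extension $\omega_{\varphi_1\star\varphi_2}$ of $\varphi_1\star\varphi_2$ to the bidual satisfies $\omega_{\varphi_1\star\varphi_2}(p)=1$. Two soft facts drive everything. The first is an elementary support lemma: if a normal state $\omega$ on a von Neumann algebra has $\omega(q)=1$ for a projection $q$, then $\omega(xq)=\omega(x)=\omega(qx)$ for all $x$, which follows from Cauchy--Schwarz and $\omega(\ind_{\mathbb{G}}-q)=0$. Here ``$\varphi_i$ supported on $p$'' is precisely $\omega_{\varphi_i}(p)=1$. The second is the defining relation $\Delta^{**}(p)(\ind_{\mathbb{G}}\otimes p)=p\otimes p$ of a group-like projection.

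First I would establish the reduction
$$\omega_{\varphi_1\star\varphi_2}=(\omega_{\varphi_1}\otimes\omega_{\varphi_2})\circ\Delta^{**},$$
where $\omega_{\varphi_1}\otimes\omega_{\varphi_2}$ denotes the normal extension of the product state $\varphi_1\otimes\varphi_2$ to $(C(\mathbb{G})\otimes C(\mathbb{G}))^{**}$ and $\Delta^{**}$ is the $\sigma$-weakly continuous extension of $\Delta$. Both sides are normal functionals on $C(\mathbb{G})^{**}$ (the right-hand side because $\Delta^{**}$ is $\sigma$-weakly continuous and the product-state extension is normal), and they agree on $C(\mathbb{G})$ by the very definition $\varphi_1\star\varphi_2=(\varphi_1\otimes\varphi_2)\Delta$. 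Since $C(\mathbb{G})$ is $\sigma$-weakly dense in the bidual, the two normal functionals coincide everywhere, in particular at $p$.

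Next I would evaluate $(\omega_{\varphi_1}\otimes\omega_{\varphi_2})(\Delta^{**}(p))$. Regarding $\omega_{\varphi_1}\otimes\omega_{\varphi_2}$ as a state on the tensor bidual, it is supported on the projection $\ind_{\mathbb{G}}\otimes p$: slicing the second leg---the composite of $\omega_{\varphi_1}\otimes\omega_{\varphi_2}$ with the leg-embedding $a\mapsto \ind_{\mathbb{G}}\otimes a$ is the normal functional agreeing with $\varphi_2$ on $C(\mathbb{G})$, hence equal to $\omega_{\varphi_2}$---gives $(\omega_{\varphi_1}\otimes\omega_{\varphi_2})(\ind_{\mathbb{G}}\otimes p)=\omega_{\varphi_2}(p)=1$. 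The support lemma then lets me insert $\ind_{\mathbb{G}}\otimes p$ on the right without changing the value, after which the group-like relation and the factorisation of the product state finish the computation:
\begin{align*}
(\omega_{\varphi_1}\otimes\omega_{\varphi_2})(\Delta^{**}(p))
&=(\omega_{\varphi_1}\otimes\omega_{\varphi_2})\bigl(\Delta^{**}(p)(\ind_{\mathbb{G}}\otimes p)\bigr)\\
&=(\omega_{\varphi_1}\otimes\omega_{\varphi_2})(p\otimes p)=\omega_{\varphi_1}(p)\,\omega_{\varphi_2}(p)=1.
\end{align*}
Combining with the reduction yields $\omega_{\varphi_1\star\varphi_2}(p)=1$, i.e. $\varphi_1\star\varphi_2$ is supported on $p$.

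The step I expect to be the main obstacle is purely the bookkeeping of the tensor-product bidual: one must fix a consistent meaning for $\ind_{\mathbb{G}}\otimes p$ and $p\otimes p$ as elements of $(C(\mathbb{G})\otimes C(\mathbb{G}))^{**}$, via the two commuting normal leg-embeddings induced by $a\mapsto a\otimes\ind_{\mathbb{G}}$ and $a\mapsto\ind_{\mathbb{G}}\otimes a$, so that the group-like relation and the slice identities genuinely live in a single algebra. Once that is pinned down, the argument is just the two soft facts above together with $\sigma$-weak density and the separate $\sigma$-weak continuity of multiplication recorded in Section \ref{bid}.
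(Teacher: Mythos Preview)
Your proof is correct and uses the same two ingredients as the paper---the group-like relation $\Delta^{**}(p)(\ind_{\mathbb{G}}\otimes p)=p\otimes p$ and the support lemma for $\omega_{\varphi_2}$---but the execution differs in a way worth noting. You work directly in the tensor bidual, establishing $\omega_{\varphi_1\star\varphi_2}=(\omega_{\varphi_1}\otimes\omega_{\varphi_2})\circ\Delta^{**}$ as an identity of normal functionals and then evaluating at $p$. The paper instead approximates $p$ by a net $(p^\lambda)\subset\mathcal{O}(\mathbb{G})$, applies $\omega_{\varphi_1}\otimes\omega_{\varphi_2}$ to $\Delta(p^\lambda)(\ind_{\mathbb{G}}\otimes p)$ in Sweedler notation, uses the support of $\varphi_2$ on $p$ to strip the $p$ from the second leg, and passes to the limit to obtain $\lim_\lambda(\varphi_1\star\varphi_2)(p^\lambda)=\omega_{\varphi_1\star\varphi_2}(p)=1$. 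The net argument is precisely a device to sidestep the tensor-bidual bookkeeping you flag as the main obstacle: it keeps one leg in $\mathcal{O}(\mathbb{G})$ at every stage so that only separate $\sigma$-weak continuity of the product and of $\omega_{\varphi_1}\otimes\omega_{\varphi_2}$ are needed, and no explicit leg-embedding machinery has to be set up. Your route is more conceptual and shorter once the bidual formalism is in place; the paper's route is more hands-on but avoids that setup cost.
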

\begin{proof}
The proof for the finite case (Prop. 3.12, \cite{mc1}) applies with some adjustments.
 Let $(p^\lambda)\subset \mathcal{O}(\mathbb{G})$ converge $\sigma$-weakly to $p\in C(\mathbb{G})^{**}$. As the extension of $\Delta$ to $\Delta^{**}$ is $\sigma$-weakly continuous
\begin{align*}
  \lim_\lambda\left[\Delta(p^\lambda)\right](1\otimes p) & =p\otimes p
  \end{align*}
The product is separately continuous, and $\omega_{\varphi_1}\otimes\omega_{\varphi_2}$ is $\sigma$-weakly continuous.
  \begin{align*}
  \implies \lim_\lambda(\omega_{\varphi_1}\otimes\omega_{\varphi_2})\sum p^{\lambda}_{(0)}\otimes p^\lambda_{(1)}p & =(\omega_{\varphi_1}\otimes\omega_{\varphi_2})(p\otimes p) \\
  \implies \lim_\lambda \sum \omega_{\varphi_1}( p^{\lambda}_{(0)})\omega_{\varphi_2}(p^\lambda_{(1)}p) & =1
  \end{align*}
   Note that as $\varphi_2$ is supported on $p$:
  \begin{align*}
 \implies \lim_\lambda \sum \varphi_1( p^{\lambda}_{(0)})\varphi_2(p^\lambda_{(1)}) & =1
   \\ \implies \lim_\lambda (\varphi_1\star\varphi_2)(p^\lambda)&=1
   \\ \implies \lim_\lambda \omega_{\varphi_1\star\varphi_2}(p^\lambda)= \omega_{\varphi_1\star\varphi_2}(p)&=1.
\end{align*}

\end{proof}

\begin{proposition}\label{Palidemp}
Suppose $p\in C(\mathbb{G})^{**}$ is a group-like projection. Then:
$$\mathbb{S}_p:=\{\varphi\in\mathbb{G}\,\colon\,\omega_\varphi(p)=1\},$$
is a Pal set, and so there is an idempotent $\phi$ supported on $p$ such that $p_\phi\leq p$.
\end{proposition}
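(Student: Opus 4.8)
The plan is to verify that $\mathbb{S}:=\{\varphi\in\mathbb{G}\colon \omega_\varphi(p)=1\}$ satisfies the three defining conditions of a Pal set (Definition \ref{Pal}) and then invoke Theorem \ref{THQI}. Three of these are immediate. For non-emptiness, note that a group-like projection is by definition non-zero, so there is a normal state $\omega$ on the von Neumann algebra $C(\mathbb{G})^{**}$ with $\omega(p)=1$ (take $\omega(x)=\omega_0(pxp)$ for any normal state $\omega_0$ on the corner $pC(\mathbb{G})^{**}p$); its restriction $\varphi:=\omega|_{C(\mathbb{G})}$ has canonical normal extension $\omega_\varphi=\omega$, since a normal state is determined by its values on the $\sigma$-weakly dense subalgebra $C(\mathbb{G})$, and hence $\varphi\in\mathbb{S}$. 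Convexity is clear, as $\varphi\mapsto\omega_\varphi$ is affine, so that $\omega_{t\varphi_1+(1-t)\varphi_2}(p)=t\omega_{\varphi_1}(p)+(1-t)\omega_{\varphi_2}(p)=1$. Closure under convolution is exactly the content of Proposition \ref{support}: being ``supported on $p$'' means precisely $\omega_\varphi(p)=1$, and that proposition shows this property is preserved by $\star$.

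The one substantive point, and the main obstacle, is weak-* closure. The difficulty is that $p$ lives in the bidual and not in $C(\mathbb{G})$, so $\varphi\mapsto\omega_\varphi(p)$ pairs $\varphi\in C(\mathbb{G})^{*}$ against an element of $C(\mathbb{G})^{**}$, and is therefore continuous for $\sigma(C(\mathbb{G})^{*},C(\mathbb{G})^{**})$ but not, in general, for the weak-* topology $\sigma(C(\mathbb{G})^{*},C(\mathbb{G}))$. For a general projection the set $\{\omega_\varphi(p)=1\}$ genuinely need not be weak-* closed: already on $C[0,1]$ the open projection $\ind_{[0,1/2)}$ gives a net of point-masses $\delta_{1/2-1/n}$ lying in the analogous set but converging weak-* to $\delta_{1/2}$, which does not. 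What rescues the present statement is that $\omega_\varphi(p)=1$ is the \emph{extreme} value, so $\mathbb{S}$ is the face $\{\omega_\varphi(p)\geq 1\}$, and such a face is weak-* closed precisely when $p$ is a \emph{closed} projection, equivalently when $\varphi\mapsto\omega_\varphi(p)$ is weak-* upper semicontinuous.

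Accordingly, the heart of the argument is to show that a group-like projection is closed, and the cleanest route uses the structure recorded in Section \ref{bid}. The aim is to realise $p$ as an infimum $p=\bigwedge_\lambda q_\lambda$ of a decreasing net of continuous projections $q_\lambda\in C(\mathbb{G})$; formula (\ref{altern}) indicates how finite infima of the (abundant) projections available in $C(\mathbb{G})\subseteq C(S_N^+)$ already land in the bidual and are computed $\sigma$-weakly. Granting such a net, normality of each $\omega_\varphi$ gives $\omega_\varphi(p)=\lim_\lambda\omega_\varphi(q_\lambda)=\inf_\lambda\varphi(q_\lambda)$, whence $\omega_\varphi(p)=1$ if and only if $\varphi(q_\lambda)=1$ for every $\lambda$. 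Since each $q_\lambda\in C(\mathbb{G})$ is continuous, every set $\{\varphi\in\mathbb{G}\colon\varphi(q_\lambda)=1\}$ is weak-* closed, and therefore
$$\mathbb{S}=\bigcap_\lambda\{\varphi\in\mathbb{G}\colon\varphi(q_\lambda)=1\}$$
is weak-* closed as an intersection of weak-* closed sets. Producing the net $(q_\lambda)$, that is, establishing that group-like projections are closed, is exactly where the work lies, and I expect this to be the crux of the proof.

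With the Pal-set conditions in hand, Theorem \ref{THQI} supplies the unique $\mathbb{S}$-invariant idempotent $\phi:=\phi_{\mathbb{S}}\in\mathbb{S}$. Membership $\phi\in\mathbb{S}$ says $\omega_\phi(p)=1$, i.e. $\phi$ is supported on $p$, and minimality of the support projection then yields $p_\phi\leq p$, completing the statement.
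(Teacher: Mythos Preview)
Your proposal is more scrupulous than the paper's own proof, which handles weak-* closure in a single word: ``straightforward''. The paper does not produce a net of continuous projections decreasing to $p$, does not invoke the notion of a closed projection, and does not otherwise justify upper semicontinuity of $\varphi\mapsto\omega_\varphi(p)$. So the ``crux'' you anticipate is simply not there.

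Your diagnosis of the difficulty is correct: for an arbitrary projection $p\in C(\mathbb{G})^{**}$ the level set $\{\varphi:\omega_\varphi(p)=1\}$ need not be weak-* closed, precisely because $\varphi\mapsto\omega_\varphi(p)$ is continuous only for $\sigma(C(\mathbb{G})^*,C(\mathbb{G})^{**})$, and your $C[0,1]$ example is apt. The gap in your proposal is that you do not actually establish that a group-like projection is closed; you only outline the shape such an argument would take (realise $p$ as a decreasing limit of continuous projections) and declare this to be ``where the work lies''. Nothing in the group-like condition $\Delta^{**}(p)(\ind_{\mathbb{G}}\otimes p)=p\otimes p$ obviously forces $p$ to be a decreasing limit of elements of $C(\mathbb{G})$, and you give no mechanism for producing the net $(q_\lambda)$. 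In the paper's actual applications (the diagonal generators $u_{jj}$, and the projection $p_G$ coming from the finite-dimensional summand $C(G)\subset C(\mathbb{G})^{**}$) the relevant projections \emph{are} closed for structural reasons, so the proposition is never invoked at a point where this matters; but as a general statement both the paper's one-word dismissal and your unfinished outline leave the same step open.

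Your treatment of non-emptiness is in fact cleaner than the paper's: you build a \emph{normal} state on $C(\mathbb{G})^{**}$ supported on $p$, so that its restriction to $C(\mathbb{G})$ has canonical extension equal to the state you started with. The paper cites the existence of a state on $C(\mathbb{G})^{**}$ attaining $1$ at $p$ without insisting on normality, which leaves the same identification slightly implicit.
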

\begin{proof}
First $\mathbb{S}_p$ is non-empty because $p$ is normal and as $\|p\|_{C(\mathbb{G})^{**}}=1$,  there exists a state $\omega$ on $C(\mathbb{G})^{**}$ such that $\omega(p)=1$ \cite{mur}, whose restriction to $C(\mathbb{G})$ is a state in $\mathbb{S}_p$. Weak-* closure and convexity are straightforward, and closure under convolution follows from Proposition \ref{support}.
\end{proof}

It is not claimed that $p$ is  necessarily equal to the support projection of the idempotent state in $\mathbb{S}_p$; and in the below it is not claimed that the idempotent state in $\mathbb{S}_p$ is necessarily equal to $\phi$.
\begin{theorem}\label{pqprops}
Suppose that an idempotent state $\phi\in\mathbb{G}$  has group-like support projection $p\in C(\mathbb{G})^{**}$. Then the quasi-subgroup generated by $\phi$:
$$\mathbb{S}_\phi\subseteq \mathbb{S}_p.$$
\end{theorem}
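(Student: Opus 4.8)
The plan is to adapt the net-approximation argument of Proposition \ref{support}, the only twist being that here one wants to \emph{deduce}, rather than assume, that $\varphi$ is supported on $p$. Write $p=p_\phi$ for the (group-like) support projection, so that $\omega_\phi(p)=1$ and hence $\omega_\phi(xp)=\omega_\phi(x)$ for every $x\in C(\mathbb{G})^{**}$. Fix $\varphi\in\mathbb{S}_\phi$, so in particular $\varphi\star\phi=\phi$, and choose a net $(p^\lambda)\subset\mathcal{O}(\mathbb{G})$ converging $\sigma$-weakly to $p$ (possible since $\mathcal{O}(\mathbb{G})$ is $\sigma$-weakly dense in the bidual).

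First I would feed the group-like identity into the bidual exactly as in Proposition \ref{support}: using the $\sigma$-weak continuity of $\Delta^{**}$ together with the separate $\sigma$-weak continuity of the product, $\lim_\lambda\Delta(p^\lambda)(\ind_{\mathbb{G}}\otimes p)=\Delta^{**}(p)(\ind_{\mathbb{G}}\otimes p)=p\otimes p$. Applying the normal functional $\omega_\varphi\otimes\omega_\phi$ and evaluating the right-hand side gives $\omega_\varphi(p)\,\omega_\phi(p)=\omega_\varphi(p)$. On the left-hand side, writing $\Delta(p^\lambda)=\sum p^\lambda_{(0)}\otimes p^\lambda_{(1)}$ and using that $\phi$ is supported on $p$ to replace $\omega_\phi(p^\lambda_{(1)}p)$ by $\omega_\phi(p^\lambda_{(1)})=\phi(p^\lambda_{(1)})$, the $\lambda$-term collapses to $(\varphi\star\phi)(p^\lambda)$. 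Hence $\lim_\lambda(\varphi\star\phi)(p^\lambda)=\omega_\varphi(p)$.

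To finish, I would substitute the hypothesis $\varphi\star\phi=\phi$: the left-hand side becomes $\lim_\lambda\phi(p^\lambda)=\omega_\phi(p)=1$, forcing $\omega_\varphi(p)=1$, which is exactly the assertion $\varphi\in\{\varphi\in\mathbb{G}\colon\omega_\varphi(p)=1\}$. It is worth recording that the identity $\omega_{\varphi\star\phi}(p)=\omega_\varphi(p)$ obtained en route holds for \emph{any} state $\phi$ supported on a group-like projection $p$, and needs neither idempotency of $\phi$ nor any a priori support condition on $\varphi$; idempotency enters the argument only through the relation $\varphi\star\phi=\phi$ defining $\mathbb{S}_\phi$.

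The genuinely routine ingredients -- existence of the net, the $\sigma$-weak continuity of $\Delta^{**}$, and the separate continuity of multiplication -- are already assembled in the proof of Proposition \ref{support}, so no new idea is required. The one point demanding care is the tensor-product bookkeeping in the bidual: one must verify that $\omega_\varphi\otimes\omega_\phi$ is a legitimate normal functional through which the displayed $\sigma$-weak limit may be passed, and that the ``supported on $p$'' simplification $\omega_\phi(xp)=\omega_\phi(x)$ may be invoked inside the second tensor leg. Since this is precisely the machinery already exercised in Proposition \ref{support}, I expect the main obstacle to be careful bookkeeping rather than any essential difficulty.
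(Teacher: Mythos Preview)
Your argument is correct and in fact streamlines the paper's own proof. Both arguments ultimately hinge on the same computation: applying $\omega_\psi\otimes\omega_\phi$ to the group-like identity $\Delta^{**}(p)(\ind_{\mathbb{G}}\otimes p)=p\otimes p$ and using $\omega_\phi(\cdot\,p)=\omega_\phi(\cdot)$ in the second leg to obtain $\omega_{\psi\star\phi}(p)=\omega_\psi(p)$. You apply this with $\psi=\varphi$ directly and read off $\omega_\varphi(p)=\omega_\phi(p)=1$. The paper instead argues by contradiction: assuming $\omega_\varphi(p)<1$, it invokes Van Daele's Lemma~\ref{van2.2} (via the Cauchy--Schwarz bound $0\leq\omega_\varphi(q\,\cdot\,q)\leq\varphi$) to show the collapsed state $\widetilde{q}\varphi$ also lies in $\mathbb{S}_\phi$, and then runs the identical computation with $\psi=\widetilde{q}\varphi$ to reach $\omega_\phi(p)=0$. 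Your route therefore bypasses both Lemma~\ref{van2.2} and the wave-function collapse step, at no cost; the paper's detour does however foreshadow the role of wave-function collapse in Theorem~\ref{wfct}, which may explain the choice. Your closing remark that the identity $\omega_{\varphi\star\phi}(p)=\omega_\varphi(p)$ requires only that $\phi$ be supported on the group-like projection $p$ is a useful observation and is implicit in the paper's argument as well.
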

\begin{proof}
Consider $\varphi\in \mathbb{S}_\phi$ not supported on $p$ so that $\omega_{\varphi}(p)<1$.
Using similar notation and techniques to Proposition \ref{support}, apply the $\sigma$-weakly continuous $\omega_{\varphi}\otimes \omega_{\phi}$ to both sides of $\Delta^{**}(p)(\mathds{1}_{\mathbb{G}}\otimes p)=p\otimes p$, using the fact that $p$ is the support projection of $\phi$:
  \begin{align*}
    \implies \lim_\lambda \left(\sum \omega_{\varphi}(p^\lambda_{(0)})\otimes \omega_{\phi}(p^\lambda_{(1)}p)\right) & =\omega_{\varphi}(p)\otimes \omega_{\phi}(p)\\
     \implies \lim_\lambda \left(\sum \varphi(p^\lambda_{(0)})\otimes \omega_{\phi}(p^\lambda_{(1)})\right) & <1\\
      \implies \lim_\lambda \left(\sum \varphi(p^\lambda_{(0)})\otimes \phi(p^\lambda_{(1)})\right) & <1\\
        \implies \lim_\lambda \left((\varphi\star \phi)(p^\lambda)\right) & <1\\
        \implies \lim_\lambda \left(\phi(p^\lambda)\right) & <1
        \\ \implies \omega_{\phi}(p)&<1,
  \end{align*}
a nonsense,  and so $\omega_\varphi(p)=1$.
\end{proof}

It is not the case that every idempotent state $\phi$  has group-like support projection $p_\phi\in C(\mathbb{G})^{**}$. Nor does Theorem \ref{pqprops} hold more generally:

\begin{corollary}\label{conversefail}
Suppose $\mathbb{G}$ is non-coamenable. Then the support projection $p_h\in C(\mathbb{G})^{**}$ of the Haar state is not a group-like projection. Furthermore:
$$\mathbb{S}_{p_h}\subsetneq \mathbb{S}_h.$$

\end{corollary}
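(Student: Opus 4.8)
The plan is to reduce both assertions to a single computation about the counit. First I would record that the Haar state absorbs \emph{every} state (Theorem \ref{THQIH}), so that $h\star\varphi=h=\varphi\star h$ for all $\varphi\in\mathbb{G}$ and hence $\mathbb{S}_h=\mathbb{G}$ is the whole state space. The inclusion $\{\varphi\in\mathbb{G}\colon\omega_\varphi(p_h)=1\}\subseteq\mathbb{S}_h$ is then automatic, and the entire task is to find one state in $\mathbb{S}_h$ on which $\omega_{(\cdot)}(p_h)\neq 1$. The natural candidate is the counit $\varepsilon$: it is available because we work universally, and in the non-coamenable case it does \emph{not} factor through $\pi_{\text{r}}$.

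The core step is to prove $\omega_\varepsilon(p_h)=0$. I would set $N:=\ker\pi_{\text{r}}$, which is the null ideal $\{f\colon h(f^\ast f)=0\}$ of the Haar state; since quantum permutation groups are of Kac type, $h$ is tracial, so $N$ is a two-sided ideal and $h$ vanishes on it. Applying the bidual decomposition (\ref{directsum}) to $N$ gives a central projection $q\in C(\mathbb{G})^{**}$ with $N^{**}=C(\mathbb{G})^{**}q$ and $C(\mathbb{G})^{**}\cong C_{\text{r}}(\mathbb{G})^{**}\oplus N^{**}$. Because $h$ vanishes on the $\sigma$-weakly dense subset $N\subseteq N^{**}$ and $\omega_h$ is normal, $\omega_h(q)=0$; thus $\omega_h(\mathds{1}_{\mathbb{G}}-q)=1$ and, by minimality of the support projection, $p_h\leq \mathds{1}_{\mathbb{G}}-q$.

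It remains to locate $\varepsilon$ on the other summand. Its canonical extension $\omega_\varepsilon$ is a \emph{character} on $C(\mathbb{G})^{**}$, so $\omega_\varepsilon(q)\in\{0,1\}$. If $\omega_\varepsilon(q)=0$, then for $f\in N$ (which satisfies $f=qf$, as $q$ is the central unit of $N^{**}$) multiplicativity gives $\varepsilon(f)=\omega_\varepsilon(qf)=\omega_\varepsilon(q)\omega_\varepsilon(f)=0$; hence $N=\ker\pi_{\text{r}}\subseteq\ker\varepsilon$ and $\varepsilon$ would factor through $\pi_{\text{r}}$, contradicting non-coamenability. Therefore $\omega_\varepsilon(q)=1$, and since $p_h\leq \mathds{1}_{\mathbb{G}}-q$ we conclude $0\leq\omega_\varepsilon(p_h)\leq\omega_\varepsilon(\mathds{1}_{\mathbb{G}}-q)=0$. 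This exhibits $\varepsilon\in\mathbb{S}_h$ with $\omega_\varepsilon(p_h)=0$, proving the strict inclusion. Finally, were $p_h$ group-like, Theorem \ref{pqprops} applied to the idempotent $h$ would force $\mathbb{S}_h\subseteq\{\varphi\colon\omega_\varphi(p_h)=1\}$, i.e. equality, contradicting what was just shown; hence $p_h$ is not group-like.

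The main obstacle is the middle computation identifying where $p_h$ sits: one must pass through the kernel of the reduced representation and use normality of $\omega_h$ to bound $p_h\leq\mathds{1}_{\mathbb{G}}-q$, noting that only this inequality (not equality, since $p_h$ may be strictly smaller than $\mathds{1}_{\mathbb{G}}-q$) is needed. Everything else is bookkeeping around the character property of $\omega_\varepsilon$ and the absorbing property of $h$.
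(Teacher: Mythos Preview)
Your argument is correct. The paper's proof is shorter and more abstract: it simply observes that non-coamenability forces $h$ to be non-faithful on the universal algebra, so $\mathds{1}_{\mathbb{G}}-p_h$ is a nonzero projection in $C(\mathbb{G})^{**}$, and then picks \emph{any} state $\omega_\varphi$ with $\omega_\varphi(\mathds{1}_{\mathbb{G}}-p_h)=1$; restricting to $C(\mathbb{G})$ and invoking Theorem~\ref{pqprops} yields the contradiction. You instead produce an \emph{explicit} witness, the counit, and locate it via the bidual splitting along $\ker\pi_{\text{r}}$. This costs you the extra computation $p_h\leq\mathds{1}_{\mathbb{G}}-q$ and the character dichotomy $\omega_\varepsilon(q)\in\{0,1\}$, but it buys a sharper conclusion: you show $\omega_\varepsilon(p_h)=0$ for a canonical state, which dovetails nicely with Remark~\ref{Vaes} (where the counit is precisely the example of a state in $\mathbb{S}_{h_{\text{r}}}$ that does not factor through $\pi_{\text{r}}$). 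Your caution that only $p_h\leq\mathds{1}_{\mathbb{G}}-q$ is needed, not equality, is well placed: the normal extension of $h_{C_{\text{r}}(\mathbb{G})}$ to $C_{\text{r}}(\mathbb{G})^{**}$ need not be faithful, so $p_h$ may indeed be strictly smaller. Both proofs close with the same appeal to Theorem~\ref{pqprops}.
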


\begin{proof}
Assume that the support $p_h\in C(\mathbb{G})^{**}$ is a group-like projection. As $\omega_h(\mathds{1}_{\mathbb{G}})=1$, $\mathds{1}_{\mathbb{G}}-p_h>0$ strictly as $\mathbb{G}$ is at the universal level and $\mathbb{G}$ is assumed non-coamenable. Therefore there exists a state $\omega_\varphi$ on $C(\mathbb{G})^{**}$ such that
$$\omega_\varphi(\mathds{1}_{\mathbb{G}}-p_h)=1\implies \omega_\varphi(p_h)=0.$$
Restrict $\omega_\varphi$ to a state $\varphi$ on $C(\mathbb{G})$. By Theorem \ref{pqprops} it follows that $\varphi$ is not invariant under the Haar state, which is absurd as $\mathbb{S}_h=\mathbb{G}$.
\end{proof}
There is a group-like projection $p$ such that
$$\mathbb{S}_p=\mathbb{S}_h;$$
the unit $p=\mathds{1}_{\mathbb{G}}$.

\bigskip

Note, via the following, where $|f|:=\sqrt{f^*f}$, there is a relationship between  quantum subgroups and wave-function collapse:
\begin{proposition}[Th. 3.3, \cite{fst}]\label{fst3.3}
Let $\mathbb{G}$ be a compact quantum group and $\phi\in C(\mathbb{G})^*$ an idempotent state. Then $\phi$ is a Haar idempotent if and only if the null-space $$N_\phi = \{f\in C(\mathbb{G}) \,:\, \phi(|f|^2)=0\}$$ is a two-sided ideal.
\end{proposition}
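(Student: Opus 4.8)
The plan is to prove both directions around the quotient $C(\mathbb{H}):=C(\mathbb{G})/N_\phi$, the guiding principle being that an idempotent state should be Haar precisely when this quotient is again a compact quantum group on which $\phi$ descends to the Haar state.

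For the forward implication, suppose $\phi=h_{C(\mathbb{H})}\circ\pi$ for a quantum subgroup $\pi:C(\mathbb{G})\to C(\mathbb{H})$. Then $\phi(f^*f)=h_{C(\mathbb{H})}(\pi(f)^*\pi(f))$, so $N_\phi=\pi^{-1}(N_{h_{C(\mathbb{H})}})$, and since the preimage of a two-sided ideal under a $\ast$-homomorphism is again two-sided, it suffices to show that $N_{h_{C(\mathbb{H})}}$ is two-sided. This is where I would use that the Haar state is tracial in the Kac case (so that $h(f^*f)=h(ff^*)$ and the left and right null-spaces coincide), or, in general, that its GNS vector is cyclic and separating for the von Neumann completion, which forces $N_{h_{C(\mathbb{H})}}=\ker\pi_{h_{C(\mathbb{H})}}$, a two-sided ideal.

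For the converse, note first that $N_\phi$ is always a closed left ideal; assuming it is two-sided I would form $C(\mathbb{H}):=C(\mathbb{G})/N_\phi$ with quotient map $\pi$, so that $\phi$ descends to a \emph{faithful} state $\bar\phi$ on $C(\mathbb{H})$, and two-sidedness upgrades the left kernel of the GNS representation to its full kernel, $N_\phi=\ker\pi_\phi$. The crucial step is that $\Delta$ descends: for $f\in N_\phi$ one computes
\[(\phi\otimes\phi)\big(\Delta(f)^*\Delta(f)\big)=(\phi\otimes\phi)\Delta(f^*f)=(\phi\star\phi)(f^*f)=\phi(f^*f)=0,\]
so $\Delta(f)$ lies in the null-space of the state $\phi\otimes\phi$. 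Granting that $\bar\phi\otimes\bar\phi$ is faithful on $C(\mathbb{H})\otimes_{\min}C(\mathbb{H})$, this null-space is exactly $\ker(\pi\otimes\pi)$, whence $(\pi\otimes\pi)\Delta(f)=0$ and $\Delta$ factors as $\Delta_{\mathbb{H}}\circ\pi=(\pi\otimes\pi)\circ\Delta$. Coassociativity is inherited from $\mathbb{G}$, and Baaj--Skandalis cancellation for $\mathbb{H}$ follows by applying the surjection $\pi\otimes\pi$ to the density relations for $\mathbb{G}$; thus $(C(\mathbb{H}),\Delta_{\mathbb{H}})$ is a compact quantum group and $\bar\phi$ is a faithful idempotent state on it.

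It then remains to identify $\bar\phi$ with the Haar state $h_{C(\mathbb{H})}$, after which $\phi=\bar\phi\circ\pi=h_{C(\mathbb{H})}\circ\pi$ is by definition a Haar idempotent. I expect the two points needing genuine care to be exactly the compact-quantum-group inputs: (i) the faithfulness of the product state $\bar\phi\otimes\bar\phi$ on the minimal tensor product, which rests on the GNS vector of a faithful state being separating together with the fact that a minimal tensor product of faithful representations is faithful; and (ii) the fact that a faithful idempotent state on a compact quantum group is its Haar state, which is most naturally obtained through invariance and the uniqueness clause of Theorem \ref{THQIH}. The forward direction and the quotient bookkeeping are routine, so the main obstacle is (ii): faithfulness is precisely the hypothesis that excludes the non-Haar idempotents, the counit being the basic example of an idempotent whose null-space is a proper, merely one-sided ideal.
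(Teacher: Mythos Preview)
The paper does not prove this proposition; it is quoted verbatim from \cite{fst} (Theorem~3.3 there) and used as a black box. So there is no in-paper proof to compare against, and your outline is in fact the standard route taken in that reference: quotient by $N_\phi$, push $\Delta$ down via the computation $(\phi\otimes\phi)\Delta(f^*f)=\phi(f^*f)=0$, verify the quotient is a compact quantum group, and identify the induced faithful idempotent with the Haar state.

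Two remarks on the write-up itself. First, your closing example is wrong: the counit is a character, so $N_\varepsilon=\ker\varepsilon$ is a genuine two-sided ideal and $\varepsilon$ \emph{is} a Haar idempotent (for the trivial subgroup $\{e\}\subseteq\mathbb{G}$). The prototypical non-Haar idempotents with one-sided null-space are Pal's states on the Kac--Paljutkin quantum group, or $\mathds{1}_\Lambda$ on $C(\widehat{\Gamma})$ for a non-normal subgroup $\Lambda\subset\Gamma$. Second, you are right that step~(ii) --- a faithful idempotent state on a compact quantum group is the Haar state --- is the one place where real work remains, and you have not actually supplied that argument. It does not follow immediately from Theorem~\ref{THQIH}: you need to pass from ``$\bar\phi\star\bar\phi=\bar\phi$ and $\bar\phi$ faithful'' to ``$\psi\star\bar\phi=\bar\phi$ for every state $\psi$'', and Lemma~\ref{van2.2} only gives this for $\psi$ dominated by a multiple of $\bar\phi$, which is not automatic in infinite dimensions. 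The clean route (as in \cite{fst}) is via corepresentation matrices: idempotency forces each $P_\alpha:=(\bar\phi(u^\alpha_{ij}))_{i,j}$ to be an idempotent matrix, $S$-invariance (Lemma~\ref{idemS}) makes it a projection, and faithfulness then forces $P_\alpha=0$ for every non-trivial $\alpha$, i.e.\ $\bar\phi=h_{C(\mathbb{H})}$ on $\mathcal{O}(\mathbb{H})$.
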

As a consequence, an idempotent state is a Haar idempotent if and only if it is tracial. Note in the below  $\omega_{\varphi_0}$ is the extension of the state $\varphi_0$ on $C(\mathbb{H})$ to a state on $C(\mathbb{H})^{**}$.
\begin{lemma}\label{fact}
Suppose that $\mathbb{H}\leq\mathbb{G}$ via $\pi:C(\mathbb{G})\to C(\mathbb{H})$. Then the extension of $\varphi_0\circ \pi$ to a state on $C(\mathbb{G})^{**}$ is given by  $\omega_{\varphi_0}\circ \pi^{**}$.

\end{lemma}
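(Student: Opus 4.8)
The plan is to produce $\omega_{\varphi_0}\circ\pi^{**}$ as a normal state on $C(\mathbb{G})^{**}$ extending $\varphi_0\circ\pi$, and then to invoke uniqueness of the normal extension. Recall from Section~\ref{bid} that the extension $\omega_\psi$ of a state $\psi$ on a unital $\mathrm{C}^*$-algebra is its image under the predual identification $C(\mathbb{X})^*\cong (C(\mathbb{X})^{**})_*$, hence it is \emph{normal} ($\sigma$-weakly continuous) and restricts to $\psi$ on $C(\mathbb{X})$. Since $C(\mathbb{X})$ is $\sigma$-weakly dense in $C(\mathbb{X})^{**}$, any two normal functionals agreeing on $C(\mathbb{X})$ agree everywhere; thus $\omega_\psi$ is the \emph{unique} normal state extending $\psi$, and it suffices to exhibit $\omega_{\varphi_0}\circ\pi^{**}$ as such an extension of $\varphi_0\circ\pi$.

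First I would check that $\omega_{\varphi_0}\circ\pi^{**}$ is a normal state. As $\pi$ is a unital $\ast$-homomorphism, its $\sigma$-weakly continuous extension $\pi^{**}:C(\mathbb{G})^{**}\to C(\mathbb{H})^{**}$ is again a unital $\ast$-homomorphism, with $\pi^{**}(\mathds{1}_{\mathbb{G}})=\pi(\mathds{1}_{\mathbb{G}})=\mathds{1}_{\mathbb{H}}$; composing the positive unital map $\pi^{**}$ with the state $\omega_{\varphi_0}$ yields a state, and composing the $\sigma$-weakly continuous $\pi^{**}$ with the normal $\omega_{\varphi_0}$ yields a normal functional. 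It then remains to check the restriction: for $f\in C(\mathbb{G})$ sitting inside $C(\mathbb{G})^{**}$, the extension $\pi^{**}$ agrees with $\pi$, so $\pi^{**}(f)=\pi(f)\in C(\mathbb{H})$, and applying $\omega_{\varphi_0}$ (which restricts to $\varphi_0$ on $C(\mathbb{H})$) gives $(\omega_{\varphi_0}\circ\pi^{**})(f)=\varphi_0(\pi(f))=(\varphi_0\circ\pi)(f)$. Hence $\omega_{\varphi_0}\circ\pi^{**}$ is a normal state extending $\varphi_0\circ\pi$, and by uniqueness it equals the canonical extension $\omega_{\varphi_0\circ\pi}$.

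The argument is essentially a functoriality statement for the bidual combined with uniqueness of normal extensions, so I do not expect a serious obstacle. The one point deserving care is the identification of the ``extension'' in the statement with the normal (predual) extension, as opposed to an arbitrary Hahn--Banach extension; once this is fixed, the $\sigma$-weak density of $C(\mathbb{G})$ in $C(\mathbb{G})^{**}$ immediately forces the two normal states to coincide.
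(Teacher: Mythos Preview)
Your proof is correct and is essentially the paper's own argument spelled out in full detail: the paper simply says the result follows from the $\sigma$-weak continuity of the maps involved together with ${\pi^{**}}|_{C(\mathbb{G})}=\pi$, which is precisely your normality-plus-restriction verification combined with uniqueness by $\sigma$-weak density.
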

\begin{proof}

The result follows from the $\sigma$-continuity of the maps involved, and ${\pi^{**}}_{\left.\right|_{C(\mathbb{G})}}=\pi$.

\end{proof}
Note that part (i) of the below is restricted to Haar idempotents coming from Haar states on universal versions.
\begin{theorem}\label{wfct}
Suppose that $\phi$ is an idempotent state on $C(\mathbb{G})$.
  \begin{enumerate}
    \item[(i)] If $\phi$ is a (universal) Haar idempotent, then $\mathbb{S}_\phi$ is closed under wave-function collapse.
    \item[(ii)] If $\phi$ is a non-Haar idempotent with group-like projection support, then $\mathbb{S}_\phi$ is not closed under wave-function collapse.
  \end{enumerate}
\end{theorem}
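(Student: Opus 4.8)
The plan is to prove the two parts separately, leaning on Vaes's equality (\ref{universalHaar}) for (i) and on the support-projection calculus together with Proposition \ref{fst3.3} for (ii).

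For part (i), write $\phi=h_{C(\mathbb{H})}\circ\pi$ for the universal inclusion $\pi:C(\mathbb{G})\to C(\mathbb{H})$ giving $\mathbb{H}\subseteq\mathbb{G}$. By Remark \ref{Vaes}, equation (\ref{universalHaar}), \emph{every} element of $\mathbb{S}_\phi$ has the form $\varphi=\psi\circ\pi$ for a state $\psi$ on $C(\mathbb{H})$, and conversely each such state lies in $\mathbb{S}_\phi$. Given such a $\varphi$ and a projection $q\in C(\mathbb{G})^{**}$ with $\omega_\varphi(q)>0$, I would use Lemma \ref{fact} to write $\omega_\varphi=\omega_\psi\circ\pi^{**}$ and set $q':=\pi^{**}(q)$, a projection in $C(\mathbb{H})^{**}$ with $\omega_\psi(q')=\omega_\varphi(q)>0$. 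Since $\pi^{**}$ is a $\ast$-homomorphism extending $\pi$, for $g\in C(\mathbb{G})$ one has $\omega_\varphi(qgq)=\omega_\psi(q'\pi(g)q')$, so that $\widetilde{q}\varphi=(\widetilde{q'}\psi)\circ\pi$. As $\widetilde{q'}\psi$ is again a state on $C(\mathbb{H})$, equation (\ref{universalHaar}) returns $\widetilde{q}\varphi\in\mathbb{S}_\phi$. This is exactly where the equality (\ref{universalHaar}), and hence the universality hypothesis, is essential: the mere inclusion $\{\psi\circ\pi\}\subseteq\mathbb{S}_\phi$ would not let one write an \emph{arbitrary} $\varphi\in\mathbb{S}_\phi$ in factored form.

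For part (ii), write $p=p_\phi$ and first record the identity $N_\phi=\{f\in C(\mathbb{G}):fp=0\}$: indeed $\phi(f^*f)=\omega_\phi\big((fp)^*(fp)\big)$ and $\omega_\phi$ is faithful on the corner $pC(\mathbb{G})^{**}p$. If $p$ were central, then $fp=0$ would give $fap=fpa=0$ for all $a$, making $N_\phi$ a two-sided ideal; so by Proposition \ref{fst3.3} the non-Haar hypothesis forces $p$ to be non-central. I would then exhibit a projection $Q\in C(\mathbb{G})^{**}$ with both $pQp\neq 0$ and $(1-p)Qp\neq 0$. Starting from a self-adjoint $x\in C(\mathbb{G})$ with $[x,p]\neq 0$, some spectral projection $q=\mathds{1}_E(x)\in C(\mathbb{G})^{**}$ satisfies $[q,p]\neq 0$, whence $(1-p)qp\neq 0$; if also $pqp\neq 0$ take $Q=q$, and otherwise take $Q=\ind_{\mathbb{G}}-q$, for which $pQp=p\neq 0$ while $(1-p)Qp=-(1-p)qp\neq 0$.

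Finally, with $\varphi=\phi\in\mathbb{S}_\phi$ and this $Q$, I would use the support property $\omega_\phi(\cdot)=\omega_\phi(p\cdot p)$ and faithfulness on the corner: $\omega_\phi(Q)=\omega_\phi(pQp)>0$, so $\widetilde{Q}\phi$ is defined, and $\omega_{\widetilde{Q}\phi}(p)=\omega_\phi(QpQ)/\omega_\phi(Q)<1$, since $\omega_\phi\big(Q(1-p)Q\big)=\omega_\phi(b^*b)>0$ with $b=(1-p)Qp\neq 0$. As Theorem \ref{pqprops} gives $\mathbb{S}_\phi\subseteq\{\varphi:\omega_\varphi(p)=1\}$, this shows $\widetilde{Q}\phi\notin\mathbb{S}_\phi$, so $\mathbb{S}_\phi$ is not stable under wave-function collapse. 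The main obstacle is the construction of $Q$: converting the abstract non-centrality of $p$ (extracted from Proposition \ref{fst3.3}) into an honest bidual projection that simultaneously meets $p$ and $1-p$ nontrivially, which is precisely the role of the spectral-projection-and-complement step.
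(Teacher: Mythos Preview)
Your proof is correct. Part (i) is essentially identical to the paper's argument: both invoke Vaes's equality (\ref{universalHaar}) to factor an arbitrary $\varphi\in\mathbb{S}_\phi$ through $\pi$, push the collapse projection forward via $\pi^{**}$ using Lemma \ref{fact}, and conclude that the collapsed state again factors through $\pi$.

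For part (ii) the two proofs share the same skeleton---derive non-centrality of $p_\phi$ from Proposition \ref{fst3.3}, find a projection $Q$ with $\omega_\phi(Q)>0$ and $\omega_\phi(Q(\ind_{\mathbb{G}}-p_\phi)Q)>0$, and invoke Theorem \ref{pqprops}---but they build $Q$ differently. The paper exploits the standing quantum-permutation-group hypothesis: the generators $u_{ij}$ are already projections, and a short commutator computation (if $u_{ij}q_\phi u_{ij}\in\overline{N_\phi}^{\sigma\text{-}w}$ for all $i,j$ then $[u_{ij},q_\phi]^3=0$, forcing $q_\phi$ central) shows some $u_{ij}$ itself works as the collapse projection. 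Your route is more abstract: you extract a self-adjoint $x\in C(\mathbb{G})$ not commuting with $p_\phi$, pass to a spectral projection $q=\mathds{1}_E(x)$ in the bidual with $[q,p_\phi]\neq 0$, and then take $Q=q$ or its complement according as $p_\phi qp_\phi$ vanishes or not. What you gain is generality---your argument does not use the magic-unitary structure and so applies to any compact quantum group satisfying the hypotheses---while the paper gains concreteness, producing a collapse projection lying in $C(\mathbb{G})$ itself rather than only in the bidual.
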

\begin{proof}
\begin{enumerate}
\item[(i)] Suppose $\phi$ is a (universal) Haar idempotent via $\pi:C(\mathbb{G})\to C(\mathbb{H})$. By Vaes's Remark \ref{Vaes}, every $\varphi\in \mathbb{S}_\phi$ is of the form $\varphi=\varphi_0\circ \pi$ for a state $\varphi_0$ on $C(\mathbb{H})$. Suppose $\varphi$ undergoes wave-function collapse to $\widetilde{q}\varphi$. Then, using Lemma \ref{fact}
$$\omega_\varphi(q)>0\implies \omega_{\varphi_0}(\pi^{**}(q))>0\qquad (\omega_{\varphi_0}\in\mathcal{S}(C(\mathbb{H})^{**})).$$ Using Lemma \ref{fact} again, it can be shown that $\widetilde{q}\varphi=\psi\circ \pi$, where:
$$\psi(g)=\frac{\omega_{\varphi_0}(\pi^{**}(q)g\pi^{**}(q))}{\omega_{\varphi_0}(\pi^{**}(q))}\qquad (g\in C(\mathbb{H}),\,\omega_{\varphi_0}\in\mathcal{S}(C(\mathbb{H})^{**})).$$
Thus, again by Vaes's remark, $\psi\circ \pi$ and thus $\widetilde{q}\varphi\in \mathbb{S}_{\phi}$, that is $\mathbb{S}_{\phi}$ is closed under wave-function collapse.
\item[(ii)] Suppose $\phi$ is a non-Haar idempotent with group-like support projection $p\in C(\mathbb{G})^{**}$. By Theorem \ref{pqprops}
$$\mathbb{S}_\phi\subseteq \mathbb{S}_p.$$
 As $\phi$ is a non-Haar idempotent, the $\sigma$-weak closure $\overline{N_\phi}^{\sigma\text{-}w}=C(\mathbb{G})^{**}q$ is only a left ideal, with $q=\mathds{1}_{\mathbb{G}}-p$ non-central. Suppose that for all $u_{ij}\in C(\mathbb{G})$, $u_{ij}q u_{ij}\in \overline{N_\phi}^{\sigma\text{-}w}$. Then $u_{ij}q u_{ij}=u_{ij}q u_{ij}q\implies u_{ij}q u_{ij}=u_{ij}q u_{ij}q u_{ij}$, so that $u_{ij}q u_{ij}$ is a projection. This implies, because $[u_{ij},q]^3=0$ and $[u_{ij},q]$ is skew adjoint, that $u_{ij}q=q u_{ij}$. Therefore $q$ is central and $N_\phi$ is an ideal. Therefore there exists $u_{ij}$ such that $u_{ij}q u_{ij}\not \in \overline{N_\phi}^{\sigma\text{-}w}$:
$$\omega_\phi(|u_{ij}q u_{ij}|^2)>0.$$
By Cauchy--Schwarz:
$$0<\omega_\phi(|u_{ij}q u_{ij}|^2)\leq \omega_\phi(u_{ij}q u_{ij})\leq \omega_\phi(u_{ij}),$$
$$\implies\widetilde{u_{ij}}\phi(q)=\frac{\omega_\phi(u_{ij}q u_{ij})}{\omega_\phi(u_{ij})}>0\implies \widetilde{u_{ij}}\phi(p)<1\implies  \widetilde{u_{ij}}\phi\not\in\mathbb{S}_\phi.$$

\end{enumerate}
\end{proof}

\section{Stabiliser quasi-subgroups}\label{stab}
 The analysis here is helped somewhat by defining the \emph{Birkhoff slice}, a map $\Phi$ from the state space of the algebra of continuous functions $C(\mathbb{G})$ on a quantum permutation group $\mathbb{G}$ to the doubly stochastic matrices:
$$\Phi(\varphi):=(\varphi(u_{ij}))_{i,j=1}^N.$$

Given a finite group $G\leq S_N$ and a partition $\mathcal{P}=B_1\sqcup\cdots \sqcup B_k$ of $\{1,\dots,N\}$, the $\mathcal{P}$-stabiliser subgroup of $G$ can be formed:
$$G_{\mathcal{P}}=\{\sigma\in G\,\colon\, \sigma(B_p)=B_p,\,1\leq p\leq k\}.$$
The spectre of stabiliser quantum subgroups can be seen in the paper of Wang (concluding remark (2), \cite{wa2}), and Huichi Huang has developed the concept in an unpublished work. A $\mathcal{P}$-stabiliser quasi-subgroup of $\mathbb{G}$ can also be defined. There are two, equivalent, definitions. The first definition uses the equivalence relation $\sim_{\mathcal{P}}$ associated to the partition:
$$\mathbb{G}_{\mathcal{P}}:=\{\varphi\in\mathbb{G}\colon \varphi(u_{ij})=0\text{ for all }i\not\sim_{\mathcal{P}}j\}.$$
Alternatively, consider the Birkhoff slice $\mathcal{S}(C(\mathbb{G}))\to M_N(\mathbb{C})$. By relabelling if necessary, the blocks of a partition can be assumed to consist of consecutive labels. Define:
$$\mathbb{G}_{\mathcal{P}}:=\{\varphi\in\mathbb{G}\,\colon\,\Phi(\varphi)\text{ is block diagonal with pattern }\mathcal{P}\},$$
that is:
$$\varphi\in \mathbb{G}_{\mathcal{P}}\iff \Phi(\varphi)=\begin{bmatrix}
                                                  \Phi_{B_1}(\varphi) & 0 & \cdots & 0 \\
                                                  0 & \Phi_{B_2}(\varphi) & \cdots & 0 \\
                                                  \vdots & \vdots & \ddots & \cdots \\
                                                  0 & 0 & \cdots & \Phi_{B_k}(\varphi)
                                                \end{bmatrix},$$
                                                where $\Phi_{B_p}(\varphi)=[\varphi(u_{ij})]_{i,j\in B_p}$.

\begin{theorem}
For any partition $\mathcal{P}$ of $\{1,\dots,N\}$, $\mathbb{G}_{\mathcal{P}}$ is a quasi-subgroup.
\end{theorem}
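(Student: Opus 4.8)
The plan is to show that $\mathbb{G}_{\mathcal{P}}$ is a Pal set, extract its invariant idempotent $\phi$ from Theorem \ref{THQI}, and then identify $\mathbb{G}_{\mathcal{P}}$ with the quasi-subgroup $\mathbb{S}_\phi$ by analysing the Birkhoff slice of $\phi$. The convenient feature of the first (pointwise) description $\mathbb{G}_{\mathcal{P}}=\{\varphi\colon \varphi(u_{ij})=0 \text{ for } i\not\sim_{\mathcal{P}}j\}$ is that each defining condition is affine and weak-* continuous, so non-emptiness (the counit lies in $\mathbb{G}_{\mathcal{P}}$ since $\varepsilon(u_{ij})=\delta_{i,j}$), convexity, and weak-* closure are immediate. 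The one substantive Pal-set axiom is closure under convolution: for $i\not\sim_{\mathcal{P}}j$ one has $(\varphi\star\rho)(u_{ij})=\sum_k\varphi(u_{ik})\rho(u_{kj})$ by (\ref{rep}), and transitivity of the equivalence relation $\sim_{\mathcal{P}}$ forbids $i\sim_{\mathcal{P}}k\sim_{\mathcal{P}}j$, so in every summand either $\varphi(u_{ik})=0$ or $\rho(u_{kj})=0$. Hence $\mathbb{G}_{\mathcal{P}}$ is a Pal set, Theorem \ref{THQI} furnishes an invariant idempotent $\phi\in\mathbb{G}_{\mathcal{P}}$, and by invariance $\mathbb{G}_{\mathcal{P}}\subseteq\mathbb{S}_\phi$ at once.

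The crux is the reverse inclusion, for which I would study $M:=\Phi(\phi)$. Since the Birkhoff slice is multiplicative, $\Phi(\varphi\star\rho)=\Phi(\varphi)\Phi(\rho)$, the matrix $M$ is idempotent; it is doubly stochastic because $\phi$ is a state and the rows and columns of $u$ are partitions of unity; and it is symmetric because $\phi\circ S=\phi$ (Lemma \ref{idemS}) together with $S(u_{ij})=u_{ji}$. The key observation is that the positivity pattern of $M$ is itself an equivalence relation. Transitivity is forced by idempotence and non-negativity, $M_{ij},M_{jk}>0\Rightarrow M_{ik}=\sum_\ell M_{i\ell}M_{\ell k}\geq M_{ij}M_{jk}>0$; reflexivity follows because each row sums to $1$, so $M_{ii}=\sum_\ell M_{i\ell}^2>0$; and symmetry is already noted. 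Writing $\mathcal{Q}$ for the resulting partition, the block-diagonality of $M$ for $\mathcal{P}$ (valid as $\phi\in\mathbb{G}_{\mathcal{P}}$) shows $\mathcal{Q}$ refines $\mathcal{P}$, and within every $\mathcal{Q}$-block \emph{all} entries of $M$ are strictly positive.

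With this full support in hand I would run the absorption argument. If $\varphi\in\mathbb{S}_\phi$ then $\phi\star\varphi=\phi$, that is $M\Phi(\varphi)=M$; for $k\not\sim_{\mathcal{Q}}j$ the $(k,j)$ entry reads $\sum_\ell M_{k\ell}\varphi(u_{\ell j})=M_{kj}=0$ with all summands non-negative, so taking $\ell=k$ (where $M_{kk}>0$) forces $\varphi(u_{kj})=0$. Thus $\mathbb{S}_\phi\subseteq\mathbb{G}_{\mathcal{Q}}$. Finally, because $\mathcal{Q}$ refines $\mathcal{P}$ one has $\mathbb{G}_{\mathcal{Q}}\subseteq\mathbb{G}_{\mathcal{P}}$, so the chain $\mathbb{G}_{\mathcal{P}}\subseteq\mathbb{S}_\phi\subseteq\mathbb{G}_{\mathcal{Q}}\subseteq\mathbb{G}_{\mathcal{P}}$ collapses to equalities and $\mathbb{G}_{\mathcal{P}}=\mathbb{S}_\phi$ is a quasi-subgroup.

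The main obstacle, and the point demanding care, is precisely that $\mathcal{Q}$ may strictly refine $\mathcal{P}$: for a genuinely quantum $\mathbb{G}$ there need be no state in $\mathbb{G}_{\mathcal{P}}$ with $\varphi(u_{ij})>0$ for some pair $i\sim_{\mathcal{P}}j$, so the invariant idempotent can be supported on a finer pattern than one might naively expect. The argument above is deliberately arranged so that the conclusion survives this phenomenon, since in that situation $\mathbb{G}_{\mathcal{P}}$ and $\mathbb{G}_{\mathcal{Q}}$ simply coincide as subsets of the state space; consequently I never need to prove the (false in general) claim $\mathcal{Q}=\mathcal{P}$, only that the positivity pattern of $M$ is an equivalence relation refining $\mathcal{P}$.
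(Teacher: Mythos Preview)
Your proof is correct and follows essentially the same route as the paper: both establish the Pal-set axioms, extract the idempotent $\phi$, use $\phi\circ S=\phi$ and idempotence to get $\Phi(\phi)_{jj}>0$ for every $j$, and then run the absorption argument on the Birkhoff slice to force $\mathbb{S}_\phi\subseteq\mathbb{G}_{\mathcal{P}}$. The only difference is that the paper skips your detour through the auxiliary equivalence $\mathcal{Q}$: once $M_{kk}>0$ is known, one can argue directly for $k\not\sim_{\mathcal{P}}j$ (where $M_{kj}=0$ since $\phi\in\mathbb{G}_{\mathcal{P}}$), so the introduction of $\mathcal{Q}$ and the worry in your final paragraph are unnecessary.
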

\begin{proof}
That $\mathbb{G}_{\mathcal{P}}$ is convex, weak-* closed, and closed under convolution is straightforward (using, for example that the Birkhoff slice is multiplicative $\Phi(\varphi_1\star \varphi_2)=\Phi(\varphi_1)\Phi(\varphi_2)$). The universal version gives $\varepsilon\in \mathbb{G}_{\mathcal{P}}$ so that $\mathbb{G}_{\mathcal{P}}$ is non-empty, and so a Pal set.

\bigskip

Suppose that $\phi_{\mathcal{P}}$ is the associated idempotent. Therefore by Lemma \ref{idemS}:
$$\phi_{\mathcal{P}}(u_{ij})=(\phi_{\mathcal{P}}\circ S)(u_{ij})=\phi_{\mathcal{P}}(u_{ji}).$$ For any fixed $j\in \{1,2,\dots,N\}$, there exists $i\in \{1,2,\dots,N\}$ such that $\phi_{\mathcal{P}}(u_{ji})>0$. From here:
$$\phi_{\mathcal{P}}(u_{jj})=(\phi_{\mathcal{P}}\star \phi_{\mathcal{P}})(u_{jj})=\phi_{\mathcal{P}}(u_{ji})\phi_{\mathcal{P}}(u_{ij})+\sum_{k\neq i}\phi_{\mathcal{P}}(u_{jk})\phi_{\mathcal{P}}(u_{kj})>0.$$
 To show that $\mathbb{G}_{\mathcal{P}}$ is equal to
 $$\mathbb{S}_{\phi_{\mathcal{P}}}=\{\varphi\in\mathbb{G}\,\colon\,\varphi\star \phi_{\mathcal{P}}=\phi_{\mathcal{P}}=\phi_{\mathcal{P}}\star \varphi \},$$ suppose $\varphi\in \mathbb{S}_{\phi_{\mathcal{P}}}$, but $\varphi\not\in \mathbb{G}_{\mathcal{P}}$. That implies there exists $u_{ij}$ such that $\varphi(u_{ij})\neq 0$ with $i\not\sim_{\mathcal{P}}j$. But this gives
$$\phi_{\mathcal{P}}(u_{ij})=(\varphi\star\phi_{\mathcal{P}})(u_{ij})=\varphi(u_{ij})\phi_{\mathcal{P}}(u_{jj})+\sum_{k\neq j}\varphi(u_{ik})\phi_{\mathcal{P}}(u_{kj})>0,$$
a contradiction.
\end{proof}

For the partition $j:=\{j\}\sqcup(\{1,2,\dots,N\}\backslash \{j\})$:
$$\mathbb{G}_{j}=\{\varphi\in \mathbb{G}\,\colon\, \varphi(u_{jj})=1\}.$$
Note for any quantum permutation group $\mathbb{\mathbb{G}}$, and $1\leq j\leq N$, the diagonal element $u_{jj}$ is a polynomial group-like projection:
$$\Delta(u_{jj})(\mathds{1}_{\mathbb{G}}\otimes u_{jj})=\left(\sum_{k=1}^Nu_{jk}\otimes u_{kj}\right)(\mathds{1}_{\mathbb{G}}\otimes u_{jj})=u_{jj}\otimes u_{jj}.$$
By Proposition \ref{prop27},  the associated idempotent state is $h_j:=\widetilde{u_{jj}}h$, that is:
$$h_j(f)=\frac{h(u_{jj}fu_{jj})}{h(u_{jj})}\qquad(f\in C(\mathbb{G})).$$

The below is (almost) a special case of Theorem \ref{wfct}, but included as it uses different proof techniques.
\begin{theorem}
The following are equivalent:
\begin{enumerate}
  \item[(i)] $h_j$ is a Haar idempotent,
  \item[(ii)] $u_{jj}$ is central,
  \item[(iii)] $\mathbb{G}_j$ is stable under wave-function collapse.
\end{enumerate}
\end{theorem}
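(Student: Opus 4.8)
The plan is to obtain the three-way equivalence from the two linked biconditionals (i)$\Leftrightarrow$(ii) and (ii)$\Leftrightarrow$(iii), working throughout with $p:=u_{jj}$, which is a group-like projection, and with the identifications $h_j=\widetilde{p}\,h$ and $\mathbb{G}_j=\mathbb{S}_{h_j}=\{\varphi\in\mathbb{G}\colon\varphi(p)=1\}$ supplied by Proposition \ref{prop27}. Two standing facts carry most of the argument: since $\mathbb{G}$ is of Kac type the Haar state $h$ is a trace, so its GNS null-space $\mathcal{N}_h=\{x\colon h(x^*x)=0\}$ is a \emph{self-adjoint} left ideal and hence a two-sided ideal; and $h$ is faithful on the regular algebra $\mathcal{O}(\mathbb{G})$.

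For the equivalence (i)$\Leftrightarrow$(ii) I would use Proposition \ref{fst3.3} together with the identity $h_j(f^*f)=h((fp)^*(fp))/h(p)$, which shows $N_{h_j}=\{f\colon fp\in\mathcal{N}_h\}$. If $p$ is central then for $f\in N_{h_j}$ and any $g$ one has $fgp=fpg=(fp)g\in\mathcal{N}_h$ (as $\mathcal{N}_h$ is a right ideal), so $N_{h_j}$ is two-sided and $h_j$ is Haar; this is (ii)$\Rightarrow$(i). Conversely, Proposition \ref{fst3.3} makes $N_{h_j}$ two-sided, and since $(\mathds{1}_{\mathbb{G}}-p)p=0$ gives $\mathds{1}_{\mathbb{G}}-p\in N_{h_j}$, two-sidedness forces $(\mathds{1}_{\mathbb{G}}-p)u_{ab}\in N_{h_j}$ for every generator, i.e. $h(w^*w)=0$ where $w:=(\mathds{1}_{\mathbb{G}}-p)u_{ab}p$. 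The decisive observation is that $w\in\mathcal{O}(\mathbb{G})$, so faithfulness of $h$ on $\mathcal{O}(\mathbb{G})$ yields $w=0$; taking adjoints gives $pu_{ab}=pu_{ab}p=u_{ab}p$, whence $[p,u_{ab}]=0$ for all generators and $p$ is central.

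For (ii)$\Rightarrow$(iii), centrality of $u_{jj}$ in $C(\mathbb{G})$ passes to $C(\mathbb{G})^{**}$ by separate $\sigma$-weak continuity of the product; then for $\varphi\in\mathbb{G}_j$ (so $\omega_\varphi(p)=1$ and $p$ acts as a unit under $\omega_\varphi$) and a projection $q$ with $\omega_\varphi(q)>0$, centrality gives $qpq=pq$ and $\widetilde{q}\varphi(p)=\omega_\varphi(pq)/\omega_\varphi(q)=1$, so $\widetilde{q}\varphi\in\mathbb{G}_j$. The substantive half is the contrapositive of (iii)$\Rightarrow$(ii): assuming $p$ non-central, choose a generator $v=u_{ab}$ with $[p,v]\neq0$; the corner decomposition $[p,v]=pv(\mathds{1}_{\mathbb{G}}-p)-(\mathds{1}_{\mathbb{G}}-p)vp$ then shows $w:=(\mathds{1}_{\mathbb{G}}-p)vp\neq0$, so the positive element $w^*w=pv(\mathds{1}_{\mathbb{G}}-p)vp$ of the corner $pC(\mathbb{G})p$ is non-zero. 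I would take a state $\psi$ attaining the norm on $w^*w$; because $w^*w\leq\|w^*w\|\,p$ this forces $\psi(p)=1$, i.e. $\psi\in\mathbb{G}_j$. Setting $q=v$, one has $\omega_\psi(v)\geq\psi(v(\mathds{1}_{\mathbb{G}}-p)v)=\psi(w^*w)=\|w^*w\|>0$ while $\psi(v)-\psi(vpv)=\psi(w^*w)>0$, so $\widetilde{v}\psi(p)=\psi(vpv)/\psi(v)<1$ and $\widetilde{v}\psi\notin\mathbb{G}_j$, defeating stability.

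The main obstacle is precisely this direction (iii)$\Rightarrow$(ii): one cannot detect non-centrality by pairing against the (non-faithful) universal Haar state directly, and the device of taking a \emph{norm-attaining} state on $w^*w\in pC(\mathbb{G})p$ is what simultaneously produces a genuine member of $\mathbb{G}_j$ and guarantees that its collapse escapes $\mathbb{G}_j$. A secondary, already-resolved, subtlety is that in (i)$\Rightarrow$(ii) one must notice the witness $w$ lies in $\mathcal{O}(\mathbb{G})$, so that faithfulness there can replace the failure of faithfulness of $h$ on all of $C(\mathbb{G})$; this is what keeps the whole argument elementary and independent of Theorem \ref{wfct} and Remark \ref{Vaes}.
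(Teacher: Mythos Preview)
Your proof is correct and, in two of the four implications, takes a genuinely different route from the paper.

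For (i)$\Rightarrow$(ii), the paper argues by contradiction through the quotient $\pi:C(\mathbb{G})\to C(\mathbb{H})$: it shows $\pi(u_{jj})=\mathds{1}_{\mathbb{H}}$ and then, assuming $[u_{jj},u_{kl}]\neq 0$, uses traciality of $h$ to derive the absurdity $h_{\mathbb{H}}(\pi(u_{kl}))>h_{\mathbb{H}}(\pi(u_{kl}))$. Your argument instead stays in $C(\mathbb{G})$: from two-sidedness of $N_{h_j}$ and $\mathds{1}_{\mathbb{G}}-p\in N_{h_j}$ you get $(\mathds{1}_{\mathbb{G}}-p)u_{ab}p\in\mathcal{N}_h\cap\mathcal{O}(\mathbb{G})$, and faithfulness of $h$ on $\mathcal{O}(\mathbb{G})$ kills it. This is cleaner and avoids introducing the quotient algebra at all.

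For (iii)$\Rightarrow$(ii), the paper works in the universal GNS representation and invokes the geometry of two non-commuting projections (B\"ottcher--Spitkovsky) to produce a unit vector $x\in\ran p$ orthogonal to $\ran p\cap\ran q$ and $\ran p\cap\ker q$, then checks by hand that the resulting vector state and its collapse by $u_{kl}$ straddle $\mathbb{G}_j$. Your device---take a state attaining the norm of $w^*w=pv(\mathds{1}_{\mathbb{G}}-p)vp$, which automatically lies in $\mathbb{G}_j$ because $w^*w\leq\|w^*w\|\,p$, and then observe $\psi(v)-\psi(vpv)=\psi(w^*w)>0$---is strictly more elementary: it needs no representation theory and no two-projection machinery, only the existence of norm-attaining states on normal elements. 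The one small point you leave implicit is why $[p,v]\neq 0$ forces $w=(\mathds{1}_{\mathbb{G}}-p)vp\neq 0$ specifically; this is because $v$ is self-adjoint, so $((\mathds{1}_{\mathbb{G}}-p)vp)^*=pv(\mathds{1}_{\mathbb{G}}-p)$ and the two off-diagonal corners vanish together.

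The remaining implications (ii)$\Rightarrow$(i) and (ii)$\Rightarrow$(iii) match the paper essentially verbatim.
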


\begin{proof}
(i) $\implies$ (ii): assume $h_j$ is a Haar idempotent, say equal to $h_{\mathbb{H}}\circ \pi$ where $\pi:C(\mathbb{G})\to C(\mathbb{H})$ is the quotient map. Note that because $h_j(u_{jj})=h_{\mathbb{H}}(\pi(u_{jj}))=1$, and $h_{\mathbb{H}}$ is faithful on $\mathcal{O}(\mathbb{H})$,
$$\mathds{1}_{\mathbb{H}}=\pi(\mathds{1}_{\mathbb{G}})=\sum_{m=1}^N \pi(u_{mj})=\pi(u_{jj}),$$
so that $\pi(u_{jj})=\mathds{1}_{\mathds{H}}$ is central in $C(\mathbb{H})$. Assume that $u_{jj}$ is non-central. Then there exists $u_{kl}\in C(\mathbb{G})$ such that $|u_{kl}u_{jj}-u_{jj}u_{kl}|^2>0$. Expanding:
$$u_{jj}u_{kl}u_{jj}-u_{jj}u_{kl}u_{jj}u_{kl}-u_{kl}u_{jj}u_{kl}u_{jj}+u_{kl}u_{jj}u_{kl}>0.$$
Applying the Haar state, which is faithful on $\mathcal{O}(\mathbb{G})$, and using its traciality yields:
\begin{align*}
  h(u_{jj}u_{kl}u_{jj})&>h(u_{jj}u_{kl}u_{jj}u_{kl}u_{jj})  \\
  \implies h_j(u_{kl}) & >h_j(u_{kl}u_{jj}u_{kl}) \\
  \implies h_{\mathbb{H}}(\pi(u_{kl})) & >h_{\mathbb{H}}(\pi(u_{kl}u_{jj}u_{kl}))=h_{\mathbb{H}}(\pi(u_{kl})\pi(u_{jj})\pi(u_{kl})))
  \\ \implies h_{\mathbb{H}}(\pi(u_{kl}))&>h_{\mathbb{H}}(\pi(u_{kl})\mathds{1}_{\mathds{H}}\pi(u_{kl})))=h_{\mathbb{H}}(\pi(u_{kl})),
\end{align*}
an absurdity, and so $u_{jj}$ is central.

\bigskip

(ii) $\implies$ (i): assume that $u_{jj}$ is central, and
$$N_j:=\{f\in C(\mathbb{G})\,\colon\,h_j(|f|^2)=0\}.$$
If $f\in N_j$ then $h(u_{jj}f^*fu_{jj})=0\implies fu_{jj}\in N_h$, the null-space of the Haar state, so that:
$$N_j=\{f\in C(\mathbb{G})\,\colon\, fu_{jj}\in N_h\}.$$
The rest of the argument is the same as (Th. 4.5, \cite{frs}).

\bigskip

(ii) $\implies$ (iii): assume that $u_{jj}$ is central. If $u_{jj}$ is central in $C(\mathbb{G})$ then it is also central in $C(\mathbb{G})^{**}$. Let $\varphi\in\mathbb{G}_j$ and $q\in C(\mathbb{G})^{**}$ such that $\omega_\varphi(q)>0$. Let $p_\varphi\in C(\mathbb{G})^{**}$ be the support projection of $\varphi$. Note that
$$\omega_\varphi(u_{jj})=\varphi(u_{jj})=1\implies p_\varphi\leq u_{jj}\implies p_\varphi=p_\varphi u_{jj}.$$
Note
\begin{align*}
  \omega_\varphi(qu_{jj}q)  =\omega_\varphi(p_\varphi qu_{jj}q p_\varphi) =\omega_\varphi(p_\varphi u_{jj}qqp_\varphi) =\omega_\varphi(p_\varphi q p_{\varphi})=\omega_\varphi(q).
\end{align*}
It follows that:
$$\widetilde{q}\varphi(u_{jj})=\frac{\omega_\varphi(qu_{jj}q)}{\omega_\varphi(q)}=1\implies \widetilde{q}\varphi\in \mathbb{G}_p.$$
(iii) $\implies$ (ii): assume now that $u_{jj}$ is non-central. Therefore there exists $u_{kl}\in C(\mathbb{G})$ such that:
$$u_{jj}u_{kl}\neq u_{kl}u_{jj}.$$
Represent $C(\mathbb{G})$ with the universal GNS representation $\pi_{\text{GNS}}(C(\mathbb{G}))\subseteq B(\mathsf{H})$. Denote
$$p:=\pi_{\text{GNS}}(u_{jj})\text{ and }q:=\pi_{\text{GNS}}(u_{kl}).$$
As $pq\neq qp$,  there exists a unit vector $x\in \operatorname{ran} p$ that is orthogonal to both  $\operatorname{ran} p\cap \operatorname{ran} q$ and $\operatorname{ran} p\cap \ker q$ (in the notation of ((1), \cite{bos}), $x\in M_0$). Define a state on $C(\mathbb{G})$:
$$\varphi_0(f)=\langle x,\pi_{\text{GNS}}(f)x\rangle\qquad (f\in C(\mathbb{G})).$$
Note that:
$$\varphi_0(u_{jj})=\langle x,p x\rangle=\langle x,x\rangle =1\implies \varphi_0\in \mathbb{G}_j.$$

Furthermore, $\varphi_0(u_{kl})$ cannot be one or zero, because together with $x\in\operatorname{ran}p$
\begin{align*}
 \varphi_0(u_{kl})= \langle x,qx\rangle =1 & \implies x\in\operatorname{ran}q \\
  \varphi_0(u_{kl})=\langle x,qx\rangle =0 & \implies x\in\ker q
\end{align*}
but $x$ is orthogonal to both $\operatorname{ran}p\cap\operatorname{ran}q$ and $\operatorname{ran} p\cap \ker q$ so
$$0<\langle x,qx\rangle<1\implies 0<\varphi_0(u_{kl})<1.$$
Now consider $\varphi=\widetilde{u_{kl}}\varphi_0$:
$$\varphi(f):=\frac{\varphi_0(u_{kl}f u_{kl})}{\varphi_0(u_{kl})}=\frac{\langle qx,\pi_{\text{GNS}}(f)qx \rangle}{\langle qx,qx\rangle}\qquad (f\in C(\mathbb{G})).$$
In particular
\begin{align*}
  \varphi(u_{jj}) & =\frac{\langle qx,pqx\rangle}{\langle qx,qx\rangle}
\end{align*}
Together with $qx\in\operatorname{ran}q$:
\begin{align*}
  \varphi(u_{jj})=1 & \implies qx\in\operatorname{ran}p \\
  \varphi(u_{jj})=0 & \implies qx\in\operatorname{ker}p
\end{align*}
By ((6), \cite{bos}), $qx$ is orthogonal to $\operatorname{ran}p\cap \operatorname{ran}q$ and $\operatorname{ker}p\cap \operatorname{ran} q$, and it follows that:
$$0<\varphi(u_{jj})<1,$$
that is,
$$\varphi_0\in \mathbb{G}_j\text{ but }\widetilde{u_{kl}}\varphi_0\not \in \mathbb{G}_j.$$
\end{proof}

Consider
$$(S_{N}^+)_N:=\{\varphi\in S_N^+\,\colon\,\varphi(u_{NN})=1\}.$$
If $\mathbb{H}$ given by $\pi:C(S_{N}^+)\to C(\mathbb{H})$ is an isotropy subgroup in the sense that $\mathbb{H}\subseteq (S_{N}^+)_N$ and so $\pi(u_{NN})=\mathds{1}_{\mathbb{H}}$, then $\mathbb{H}\leq S_{N-1}^+$ by the universal property. In this way, where $\pi_{N-1}:C(S_N^+)\to C(S_{N-1}^+)$ is the quotient
$$[u_{ij}^{S_N^+}]_{i,j=1}^N\to \begin{bmatrix}
                                  u^{S_{N-1}^+} & 0 \\
                                  0 & \mathds{1}_{S_{N-1}^+}
                                \end{bmatrix},$$
the following is a maximal (set of states on an algebra of continuous functions on a) quantum subgroup in the quasi-subgroup  $(S_{N}^+)_N$
$$(S_{N-1}^+)^{\leq S_N^+}=\{\varphi\circ \pi_{N-1}\,\colon\,\varphi\in S_{N-1}^+\}.$$
In the classical regime, $N\leq 3$, quasi-subgroups are subgroups, and so $(S_{N}^+)_N=(S_{N-1}^+)^{\leq S_N^+}$. However:
\begin{proposition}
The inclusion $(S_{N-1}^+)^{\leq S_N^+}\subset (S_{N}^+)_N$ is proper for $N\geq 4$.
\end{proposition}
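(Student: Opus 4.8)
The plan is to exhibit a state $\varphi\in (S_N^+)_N$ that does not factor through $\pi_{N-1}:C(S_N^+)\to C(S_{N-1}^+)$, thereby establishing strict inclusion. The obstruction to factoring is precisely that, beyond fixing label $N$ (the condition $\varphi(u_{NN})=1$), a state of the form $\varphi_0\circ\pi_{N-1}$ must vanish on the kernel of $\pi_{N-1}$, which is the ideal generated by the off-diagonal couplings to label $N$, namely $u_{iN}$ and $u_{Nj}$ for $i,j\neq N$. So I would look for $\varphi\in (S_N^+)_N$ that is nonzero somewhere it should not be — but since $\varphi(u_{NN})=1$ already forces $\varphi(u_{iN})=\varphi(u_{Nj})=0$ for $i,j\neq N$ (as these are dominated by $\mathds{1}_{\mathbb{G}}-u_{NN}$ in the appropriate sense via the partition-of-unity relations), the real distinction must be seen not on $C(S_{N-1}^+)$'s image but on elements of the kernel ideal that are \emph{not} linear in the generators.

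Concretely, I would use the non-commutativity of $C(S_{N-1}^+)$ for $N-1\geq 3$, i.e. $N\geq 4$, which is exactly the stated threshold. The key point is that $\pi_{N-1}$ kills the ideal $\mathcal{I}=\ker\pi_{N-1}$, and a state factoring as $\varphi_0\circ\pi_{N-1}$ must satisfy $\varphi(f)=0$ for all $f\in\mathcal{I}$, in particular $\varphi(f^*f)=0$ for such $f$. The strategy is to produce $\varphi\in(S_N^+)_N$ together with an element $f\in\mathcal{I}$ for which $\varphi(f^*f)>0$. A natural candidate for $f$ is a commutator-type element built from the generators of the upper-left $(N-1)\times(N-1)$ block that becomes trivial after applying $\pi_{N-1}$ only because one multiplies by a coupling term $u_{iN}$ — but more robustly, I would exploit that in $C(S_N^+)$ the element $u_{iN}u_{jN}$ (for distinct $i,j<N$) lies in $\mathcal{I}$, and construct a magic-unitary representation of $C(S_N^+)$ on a finite-dimensional Hilbert space in which $u_{NN}$ acts as a projection $p$ with a unit vector $x\in\operatorname{ran}p$ such that the vector state $\varphi_0(\cdot)=\langle x,\pi_{\mathrm{GNS}}(\cdot)x\rangle$ has $\varphi_0(u_{NN})=1$ yet $\varphi_0(f^*f)>0$ for some $f\in\mathcal{I}$.

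The cleanest route uses a known genuinely quantum magic unitary. For $N\geq 4$ there exists a magic unitary on a Hilbert space arising from a pair of non-commuting projections (the Pauli/flat magic unitaries underlying the non-classicality of $S_4^+$), and I would arrange the representation so that the $N$-th label is fixed — e.g. take a $4\times 4$ flat magic unitary whose entries are rank-one projections $\tfrac14(\ind+\vec{a}_{ij}\cdot\vec\sigma)$ on $\mathbb{C}^2$, and then dilate/embed into $N$ labels by fixing labels $N,N-1,\dots,5$ trivially and placing the genuinely quantum $4\times 4$ block in positions involving $N$. In such a representation one checks directly that $\varphi(u_{NN})=1$ while a suitable product of off-diagonal entries, which lies in $\ker\pi_{N-1}$, has strictly positive expectation, so $\varphi$ cannot equal any $\varphi_0\circ\pi_{N-1}$.

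The main obstacle I anticipate is arranging the representation so that \emph{simultaneously} $\varphi(u_{NN})=1$ (forcing $\varphi$ into $(S_N^+)_N$) and $\varphi$ detects a kernel element. Fixing label $N$ collapses the $N$-th row and column to $\delta$-values, so the witnessing element $f\in\mathcal{I}$ must be built from quadratic or higher expressions in the couplings $u_{iN}$ that survive this collapse in $C(S_N^+)$ but die under $\pi_{N-1}$; verifying that $\mathcal{I}$ genuinely contains such an element with $\varphi(f^*f)>0$, rather than merely elements forced to zero by $\varphi(u_{NN})=1$, is the delicate bookkeeping step. I expect the partition-of-unity relations for the magic unitary and the separate identification of $\ker\pi_{N-1}$ as the ideal generated by $\{u_{iN},u_{Nj}:i,j<N\}$ to do the heavy lifting, with the genuine quantumness of the $4$-label block (the failure of commutativity that first appears at $N-1=3$) providing exactly the positivity on $\mathcal{I}$ that is impossible for $N\leq 3$.
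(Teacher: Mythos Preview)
Your overall strategy is correct: exhibit a state $\varphi$ with $\varphi(u_{NN})=1$ that is nonzero on some element of $\ker\pi_{N-1}$, and you correctly observe that the witness cannot be linear in the $u_{iN}$ but must be a higher-order product. However, the concrete construction you sketch does not work as written. The operators $\tfrac14(\ind+\vec a_{ij}\cdot\vec\sigma)$ on $\mathbb{C}^2$ are not projections (their eigenvalues are $0$ and $\tfrac12$), and in any case four nonzero rank-one projections on $\mathbb{C}^2$ cannot sum to $I_2$, so there is no $4\times4$ magic unitary of the shape you describe. Your embedding instructions are also contradictory: you propose to fix label $N$ trivially \emph{and} to place the quantum block in positions involving $N$. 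If $N$ is fixed trivially then every $u_{iN}$ with $i\neq N$ acts as $0$ in the representation, the whole ideal $\ker\pi_{N-1}$ is annihilated, and the resulting state \emph{does} factor through $\pi_{N-1}$; if instead $N$ is one of the quantum positions you still have to verify by hand that $\langle x,u_{11}u_{2N}u_{11}x\rangle>0$ for some $x\in\operatorname{ran}u_{NN}$, which you have not done.

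The paper bypasses all of this by choosing both the state and the witness element canonically. The state is $h_N=\widetilde{u_{NN}}h$, which lies in $(S_N^+)_N$ by construction (indeed it is the idempotent of that quasi-subgroup). The witness element is $u_{11}u_{2N}u_{11}\in\ker\pi_{N-1}$, and traciality of $h$ gives
\[
h_N(u_{11}u_{2N}u_{11})=\frac{h(u_{NN}u_{11}u_{2N}u_{11}u_{NN})}{h(u_{NN})}=\frac{h(|u_{2N}u_{11}u_{NN}|^2)}{h(u_{NN})}.
\]
This is strictly positive because $u_{2N}u_{11}u_{NN}\neq 0$ in $\mathcal{O}(S_N^+)$ for $N\geq 4$ (a known fact, cited from \cite{mc2}) and $h$ is faithful on $\mathcal{O}(S_N^+)$. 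No finite-dimensional representation needs to be built. If you want to repair your approach, a genuine $4\times4$ magic unitary in $M_4(\mathbb{C})$ (e.g.\ one coming from a complex Hadamard matrix) with the quantum block placed on labels $\{1,2,3,N\}$ would work, but you would then still need the explicit computation that $u_{11}u_{2N}u_{11}$ has nonzero expectation in the chosen vector state.
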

\begin{proof}
Note that for any $(\varphi\circ \pi_{N-1})\in (S_{N-1}^+)^{\leq S_N^+}$,
\begin{align*}
  (\varphi\circ \pi_{N-1})(u_{11}u_{2N}u_{11})  =\varphi(\pi_{N-1}(u_{11}u_{2N}u_{11}))=\varphi(\pi_{N-1}(u_{11})\pi_{N-1}(u_{2N})\pi_{N-1}(u_{11}))=0,
\end{align*}
as $\pi_{N-1}(u_{2N})=0$. On the other hand, consider $h_N=\widetilde{u_{NN}}h$, the idempotent in the stabiliser quasi-subgroup $(S_N^+)_N$,
\begin{align*}
  h_N(u_{11}u_{2N}u_{11}) =\frac{h(u_{NN}u_{11}u_{2N}u_{11}u_{NN})}{h(u_{NN})}= \frac{h(|u_{2N}u_{11}u_{NN}|^2)}{h(u_{NN})}.
\end{align*}
Note $|u_{2N}u_{11}u_{NN}|^2\neq 0$ (see \cite{mc2}), and  because $h$ faithful on $\mathcal{O}(S_N^+)$, $h_N(u_{11}u_{2N}u_{11})>0$, and thus $h_N$ is not in $(S_{N-1}^+)^{\leq S_N^+}$.

\end{proof}

Trying to do something for more complicated partitions of $\{1,\dots,N\}$, with an (explicit) idempotent state with a density with respect to $\omega_h$ is in general more troublesome. Consider for example:
$$\mathcal{P}_{i,j}:=(\{1,\dots,N\}\backslash\{i,j\})\sqcup\{i\}\sqcup\{j\}.$$
 The obvious way to fix two points is to work with $p_{i,j}:=u_{ii}\wedge u_{jj}$, an element of $ C(\mathbb{G})^{**}$, and given a quantum permutation $\varphi\in \mathbb{G}$, define a subset of $\mathbb{G}$ by:
$$\mathbb{G}_{i,j}:=\{\varphi\in \mathbb{G}:\omega_\varphi(p_{i,j})=1\}.$$
Note that $\mathbb{G}_{i,j}=\mathbb{G}_i\cap \mathbb{G}_j$. However the following is not in general well defined because  $\omega_h(p_{i,j})$ is not necessarily strictly positive:
$$\phi_{i,j}:=\frac{\omega_h(p_{i,j}\cdot p_{i,j})}{\omega_h(p_{i,j})},$$
For example, consider the dual of the infinite dihedral group with the famous embedding $\widehat{D_\infty}\leq S_4^+$.
Working with alternating projection theory, and noting the Haar state on $C(\widehat{D_\infty})$ is $h(\lambda)=\delta_{\lambda,e}$,
$$\omega_h(p_{1,3})=\lim_{n\rightarrow \infty}h ((u_{11}u_{33})^n)=\lim_{n\to \infty}\frac{1}{4^n}=0.$$
\begin{proposition}
The stabiliser quasi-subgroup $(\widehat{D_\infty})_{1,3}$ is the trivial group.
\end{proposition}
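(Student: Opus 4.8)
The plan is to strip away the infimum projection $p_{1,3}=u_{11}\wedge u_{33}$ (which the displayed computation shows is $\omega_h$-null, hence useless for forming a density) and reduce to a rigidity statement about positive-definite functions. Using the identity $\mathbb{G}_{i,j}=\mathbb{G}_i\cap\mathbb{G}_j$ recorded above,
$$(\widehat{D_\infty})_{1,3}=\{\varphi\in\widehat{D_\infty}\,\colon\,\varphi(u_{11})=1=\varphi(u_{33})\}.$$
Now I would unwind the famous embedding $\pi\colon C(S_4^+)\to C^*(D_\infty)=C(\widehat{D_\infty})$, where $D_\infty=\langle g,h\mid g^2=h^2=e\rangle$: it sends $u_{11}\mapsto p:=\tfrac{1+g}{2}$ and $u_{33}\mapsto q:=\tfrac{1+h}{2}$, the two free projections generating $C^*(D_\infty)$. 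Viewing $\varphi$ as a normalised positive-definite function on $D_\infty$ (so $\varphi(e)=1$) gives $\varphi(u_{11})=\tfrac{1+\varphi(g)}{2}$ and $\varphi(u_{33})=\tfrac{1+\varphi(h)}{2}$; since $g,h$ are involutions these values are real, so $\varphi(u_{11})=1\iff\varphi(g)=1$ and $\varphi(u_{33})=1\iff\varphi(h)=1$.

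The decisive point is that $H_\varphi:=\{\gamma\in D_\infty\,\colon\,\varphi(\gamma)=1\}$ is a subgroup. I would see this through the GNS triple $(\mathcal{H}_\varphi,\pi_\varphi,\xi_\varphi)$: from $\varphi(\gamma)=\langle\xi_\varphi,\pi_\varphi(\gamma)\xi_\varphi\rangle$ with $\|\xi_\varphi\|=\|\pi_\varphi(\gamma)\xi_\varphi\|=1$, equality in Cauchy--Schwarz yields $\varphi(\gamma)=1\iff\pi_\varphi(\gamma)\xi_\varphi=\xi_\varphi$, and the stabiliser of $\xi_\varphi$ is manifestly a subgroup. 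Thus $\varphi\in(\widehat{D_\infty})_{1,3}$ forces $g,h\in H_\varphi$; as $g$ and $h$ generate $D_\infty$ this gives $H_\varphi=D_\infty$, i.e. $\varphi\equiv 1$. That constant function is precisely the counit $\varepsilon$ on $C^*(D_\infty)$, and conversely $\varepsilon$ lies in $(\widehat{D_\infty})_{1,3}$, so the quasi-subgroup is the singleton $\{\varepsilon\}$, the trivial group.

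The mathematics here is routine once set up; the only real friction I anticipate is bookkeeping with the embedding --- making sure the chosen labels $1$ and $3$ land on the two \emph{distinct} generator projections $p$ and $q$ (labels in a common block would give only $\varphi(g)=1$ and hence a non-trivial dual $\widehat{\mathbb{Z}_2}$). It is also worth confirming that $\varepsilon$ genuinely belongs to $(\widehat{D_\infty})_{1,3}$ in the projection formulation, \emph{despite} $\omega_h(p_{1,3})=0$: since $\varepsilon$ is a character its support projection $p_\varepsilon$ is dominated by both $u_{11}$ and $u_{33}$, hence by $p_{1,3}=u_{11}\wedge u_{33}$, so $\omega_\varepsilon(p_{1,3})=1$.
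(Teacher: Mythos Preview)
Your argument is correct and complete. The paper takes a different, more ``in-house'' route: from $\varphi(u_{11})=\varphi(u_{33})=1$ it observes that the Birkhoff slice $\Phi(\varphi)$ is forced to be the identity matrix $I_4$ (this uses the block-diagonal form of the embedding, where $u_{22}=u_{11}$ and $u_{44}=u_{33}$), and then invokes the general Proposition~\ref{character} that a state with permutation Birkhoff slice is a character, hence the counit.

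Your approach instead exploits the group-dual structure directly: identifying states with normalised positive-definite functions on $D_\infty$, you read off $\varphi(g)=\varphi(h)=1$ and use the Cauchy--Schwarz/GNS stabiliser argument to conclude $\varphi\equiv 1$. This is more elementary and self-contained --- it does not need the Birkhoff-slice machinery --- and in a sense it is the same Cauchy--Schwarz idea underlying Proposition~\ref{character}, applied at the group level rather than at the magic-unitary level. The paper's route has the advantage of reusing a tool that works for arbitrary quantum permutation groups, while yours makes transparent why the specific labels $1,3$ (landing on the two distinct generator projections) are what force triviality. Your closing check that $\omega_\varepsilon(p_{1,3})=1$ despite $\omega_h(p_{1,3})=0$ is a nice sanity observation the paper leaves implicit.
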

\begin{proof}
Let $\varphi\in(\widehat{D_{\infty}})_{1,3}$ so that $\varphi(u_{11})=\varphi(u_{33})=1$. Then $\Phi(\varphi)=I_4$ and, as will be seen later, by Proposition \ref{character}, $\varphi$ is a character, necessarily equal to the counit.
\end{proof}
By Proposition \ref{charsupport}, $p_{1,3}=p_{\varepsilon}$, the support projection of the counit. As $C(\widehat{D_\infty})$ is coamenable, the Haar state is faithful and so $\omega_h(p_{\varepsilon})=0$ implies that the support projection of the counit, $p_{\varepsilon}$ is not in $C(\widehat{D_\infty})$ (and indeed the universal unital $\mathrm{C}^*$-algebra generated by two projections $p$ and $q$ does not include $p\wedge q$).

\bigskip

 Note that in general $\{\varepsilon\}$ is a quantum subgroup of any quantum permutation group in the sense that $\varepsilon$ is a Haar idempotent via the quotient $\pi:C(\mathbb{G})\to C(e)$ to the trivial group $\{e\}\leq\mathbb{G}$:
$$[u_{ij}]_{i,j=1}^N\to \operatorname{diag}(1_{\mathbb{C}},\dots,1_{\mathbb{C}}).$$

\section{Exotic quasi-subgroups of the quantum permutation group\label{exotic}}
A second reason for studying Pal sets and their generated quasi-subgroups is to postulate, or rather speculate, on, for some $N\geq 4$, the existence of an \emph{exotic} intermediate quasi-subgroup:
$$S_N\subsetneq \mathbb{S}_N\subsetneq S_N^+.$$
It is currently unknown whether or not there is a Haar idempotent giving an exotic intermediate quantum subgroup $S_N\lneq \mathbb{G}_N\lneq S_N^+$ for some  $N\geq 6$. It is the case that $S_N=S_N^+$ for $N\leq 3$, and for $N=4$ \cite{bb3} and $N=5$ \cite{ban} there is no such Haar idempotent. If there is \emph{no} exotic intermediate quasi-subgroup $S_N\subsetneq \mathbb{S}_N \subsetneq S_N^+$ then it is the case that $S_N$ is a maximal quantum subgroup of $S_N^+$ for all $N$. Of course this is stronger than the non-existence of an exotic intermediate quantum subgroup. Indeed it is strictly stronger in the sense that given a quantum permutation group $\mathbb{G}$ and its classical version $G\leq \mathbb{G}$ (see below), the existence of a strictly intermediate quasi-subgroup $G\subsetneq \mathbb{S}\subsetneq \mathbb{G}$ does not imply a strictly intermediate quantum subgroup. For example, the finite dual $\widehat{A_5}$ has trivial classical version, and for any non-trivial subgroup $H\leq A_5$ the non-Haar idempotent $\mathds{1}_{H}$ gives a strict intermediate quasi-subgroup:
$$\{\varepsilon\}\subsetneq \mathbb{S}_H\subsetneq \widehat{A_5}.$$
However $\widehat{A_5}$ has no non-trivial quantum subgroups because $A_5$ is simple.

\bigskip

The idea for an example of an exotic intermediate quasi-subgroup would be to find a Pal set given by some condition that is satisfied by the `elements of $S_N$ in $S_N^+$' --- and some states non-zero on a commutator $[f,g]\in C(S_N^+)$ --- but not by the Haar state on $C(S_N^+)$. It will be seen that the `elements of $S_N$ in $S_N^+$' correspond to the characters on $C(S_N^+)$.
\subsection{The classical version of a quantum permutation group}
 The quotient of $C(\mathbb{G})$ by the commutator ideal is the algebra of functions on the characters on $C(\mathbb{G})$. The characters form a group $G$, with the group law given by the convolution:
$$\varphi_1\star \varphi_2=(\varphi_1\otimes\varphi_2)\Delta,$$
the identity is the counit, and the inverse is the reverse $\varphi^{-1}=\varphi\circ S$.

\bigskip

  As before the Birkhoff slice aids the analysis. See \cite{mcc} for more, where the following proof is sketched.
\begin{proposition}\label{character}
A state $\varphi$ on $C(\mathbb{G})$ is a character if and only if $\Phi(\varphi)$ is a permutation matrix.
\end{proposition}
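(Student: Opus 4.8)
The plan is to prove both implications, with the forward direction routine and the reverse direction carrying the content. First I would handle ($\Rightarrow$): if $\varphi$ is a character then it is multiplicative, so for each projection $u_{ij}$ one has $\varphi(u_{ij})^2=\varphi(u_{ij}^2)=\varphi(u_{ij})$, which together with positivity forces $\varphi(u_{ij})\in\{0,1\}$. Since the Birkhoff slice $\Phi(\varphi)$ is always doubly stochastic---the rows and columns of $u$ are partitions of unity, so $\sum_j\varphi(u_{ij})=\varphi(\ind_{\mathbb{G}})=1$ and likewise down columns---a doubly stochastic matrix with $0$--$1$ entries is a permutation matrix, and this direction is finished.

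For the converse ($\Leftarrow$), suppose $\Phi(\varphi)$ is a permutation matrix, so there is $\sigma$ with $\varphi(u_{i\sigma(i)})=1$ and $\varphi(u_{ij})=0$ for $j\neq\sigma(i)$. The crux is an absorption lemma for projections obtained from the Cauchy--Schwarz inequality for the state $\varphi$: if $p$ is a projection with $\varphi(p)=1$ then $|\varphi(a(\ind_{\mathbb{G}}-p))|^2\leq\varphi(aa^*)\varphi(\ind_{\mathbb{G}}-p)=0$, so $\varphi(ap)=\varphi(a)$, and symmetrically $\varphi(pa)=\varphi(a)$; whereas if $q$ is a projection with $\varphi(q)=0$ then $|\varphi(aq)|^2\leq\varphi(aa^*)\varphi(q)=0$, so $\varphi(aq)=\varphi(qa)=0$. (Equivalently, one may argue through the support projection $p_\varphi$ of Section \ref{bid}, since $\varphi(u_{i\sigma(i)})=1$ forces $p_\varphi\leq u_{i\sigma(i)}$, hence $u_{i\sigma(i)}p_\varphi=p_\varphi$.)

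With the lemma in hand I would evaluate $\varphi$ on an arbitrary word $w=u_{i_1j_1}\cdots u_{i_kj_k}$ in the generators. Reading from the right, each letter is a projection: a factor with $j_l=\sigma(i_l)$ satisfies $\varphi(\,\cdot\,)=1$ and is absorbed without changing the value, while the rightmost factor with $j_l\neq\sigma(i_l)$ satisfies $\varphi(\,\cdot\,)=0$ and annihilates the whole expression. Thus $\varphi(w)=\prod_{l=1}^k\varphi(u_{i_lj_l})$, exactly the product of the values on the letters, from which $\varphi(ww')=\varphi(w)\varphi(w')$ for any two words. Multiplicativity then extends to all of $\mathcal{O}(\mathbb{G})$ by bilinearity and to $C(\mathbb{G})$ by norm-continuity; being a state, $\varphi$ is already unital and $*$-preserving, so it is a character.

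The main obstacle is the reverse direction, and within it the bookkeeping reducing $\varphi$ on a word to the product over its letters; the conceptual content, however, is entirely in the absorption lemma---once a probability-one projection can be deleted freely and a probability-zero projection kills everything, multiplicativity is forced. A minor point worth stating explicitly is that the generators $u_{ij}$ are self-adjoint, so every element of the dense $*$-algebra is a linear combination of such words and no separate treatment of adjoints is required.
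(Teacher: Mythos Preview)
Your proof is correct. The forward direction is identical to the paper's. For the converse, the paper takes a different route: it passes to the GNS representation $(\mathsf{H}_\sigma,\pi_\sigma,\xi_\sigma)$ of $\varphi$, observes that $\pi_\sigma(u_{ij})\xi_\sigma$ is either $\xi_\sigma$ or $0$, and hence that $\pi_\sigma(f)\xi_\sigma\in\mathbb{C}\xi_\sigma$ for every polynomial $f$; a Cauchy-sequence argument extends this to all of $C(\mathbb{G})$, from which multiplicativity of $\varphi(gf)=\langle\xi_\sigma,\pi_\sigma(g)\pi_\sigma(f)\xi_\sigma\rangle$ follows. Your approach bypasses the GNS machinery entirely, using the Cauchy--Schwarz absorption/annihilation lemma directly on the state to compute $\varphi$ on words. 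The two arguments are morally the same---your absorption lemma $\varphi(ap)=\varphi(a)$, $\varphi(aq)=0$ is exactly the GNS statement $\pi_\sigma(p)\xi_\sigma=\xi_\sigma$, $\pi_\sigma(q)\xi_\sigma=0$ read back through the vector state---but yours is more elementary and self-contained, while the paper's is slightly more conceptual in that it exhibits the one-dimensionality of the cyclic subspace explicitly.
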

\begin{proof}
If $\varphi$ is a character,
$$\varphi(u_{ij})=\varphi(u_{ij}^2)=\varphi(u_{ij})^2\Rightarrow \varphi(u_{ij})=0\text{ or 1}.$$
As it is doubly stochastic, it follows that $\Phi(\varphi)$ is a permutation matrix. Suppose now that $\Phi(\varphi)=P_\sigma$, the permutation matrix of $\sigma\in S_N$. Consider the GNS representation $(\mathsf{H}_{\sigma},\pi_{\sigma},\xi_{\sigma})$ associated to $\varphi$. By assumption
\begin{equation}\varphi(u_{ij})=\langle \xi_{\sigma},\pi_{\sigma}(u_{ij})(\xi_{\sigma})\rangle=\langle \pi_{\sigma}(u_{ij})(\xi_{\sigma}),\pi_{\sigma}(u_{ij})(\xi_{\sigma})\rangle=\|\pi_{\sigma}(u_{ij})(\xi_\sigma)\|^2= 0 \text{ or }1.\label{span}\end{equation}
For $f\in C(\mathbb{G})$, let $(f^{(n)})_{n\geq 1}\subset \mathcal{O}(\mathbb{G})$ converge to $f$. For each $f^{(n)}$, (\ref{span}) implies there exists $a_n\in \mathbb{C}$ such that
$$\pi_\sigma(f^{(n)})(\xi_\sigma)=a_n\xi_\sigma.$$
The representation $\pi_\sigma$ is norm continuous, and so $\pi_\sigma(f^{(n)})\to \pi_\sigma(f)$, and $(\pi_\sigma(f^{(n)}))_{n\geq 1}$ is Cauchy:
\begin{align*}\|\pi_\sigma(f^{(m)})-\pi_\sigma(f^{(n)})\|&\to 0
\\ \implies |a_m-a_n|\|\xi_\sigma\|&\to 0,
\end{align*}
which implies that $(a_n)_{n\geq 1}$ converges, to say $a_f\in\mathbb{C}$. The norm convergence of $f^{(n)}\to f$ implies the strong convergence of $\pi_\sigma(f^{(n)})$ to $\pi_\sigma(f)$:
\begin{align*}
  \pi_\sigma(f)\xi_\sigma & =\lim_{n\rightarrow \infty}\left(\pi_\sigma(f^{(n)})\xi_\sigma\right)  =\lim_{n\rightarrow \infty}(a_n\xi_\sigma)=a_f\xi_\sigma.
\end{align*}
Therefore
\begin{align*}
  \varphi(gf) & =\langle\xi_{\sigma},\pi_{\sigma}(gf)\xi_{\sigma}\rangle=\langle \xi_{\sigma},\pi_{\sigma}(g)\pi_{\sigma}(f)(\xi_{\sigma})\rangle
   \\&=\langle \xi_{\sigma},\pi_{\sigma}(g)a_f \xi_{\sigma}\rangle=a_f\langle\xi_{\sigma},\pi_{\sigma}(g)\xi_{\sigma}\rangle=\varphi(g)\varphi(f).
   \end{align*}
\end{proof}
Where $\pi_{\text{ab}}$ is the abelianisation, define $\operatorname{ev}_\sigma:C(\mathbb{G})\to \mathbb{C}$:
$$\operatorname{ev}_\sigma(f):=\pi_{\text{ab}}(f)(\sigma)\qquad (f\in C(\mathbb{G})).$$
This is a $\ast$-homomorphism, but in general  $\operatorname{ev}_\sigma$ need not be non-zero.
\begin{proposition}\label{charactertwo}
If $\varphi$ is a state on $C(\mathbb{G})$ such that $\Phi(\varphi)=P_\sigma$, then $\varphi=\operatorname{ev}_{\sigma}$.
\end{proposition}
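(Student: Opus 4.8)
The plan is to leverage the previous result together with the abelianisation/Gelfand machinery rather than to argue directly on generators. First I would invoke Proposition \ref{character}: since $\Phi(\varphi)=P_\sigma$ is a permutation matrix, $\varphi$ is already known to be a character, i.e. a $\ast$-homomorphism $C(\mathbb{G})\to\mathbb{C}$. Because $\mathbb{C}$ is commutative, $\varphi$ annihilates every commutator $[f,g]$ and hence the whole commutator ideal $N_{\text{ab}}$; therefore $\varphi$ descends through the abelianisation, so that $\varphi=\chi\circ\pi_{\text{ab}}$ for a uniquely determined character $\chi$ on $C(\mathbb{G})/N_{\text{ab}}$.

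Next I would identify $\chi$ via Gelfand's theorem. The abelianisation $C(\mathbb{G})/N_{\text{ab}}$ is the algebra of continuous functions on the compact group $G\subseteq S_N$ of characters, and every character on this commutative $\mathrm{C}^\ast$-algebra is evaluation at a unique point $\tau\in G$. Thus $\chi$ is evaluation at some $\tau\in G$, and so $\varphi=\ev_\tau$ in the sense $\ev_\tau(f)=\pi_{\text{ab}}(f)(\tau)$.

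It then remains to pin down which $\tau$ this is, which I would do with the Birkhoff slice. Under the inclusion $G\subseteq S_N$ the image $\pi_{\text{ab}}(u_{ij})$ is the coordinate indicator $\tau\mapsto\delta_{i,\tau(j)}$ (this convention is forced by $\varepsilon(u_{ij})=\delta_{i,j}$ and $S(u_{ij})=u_{ji}$), so that $\Phi(\ev_\tau)=P_\tau$. By hypothesis $\Phi(\varphi)=P_\sigma$, whence $P_\tau=P_\sigma$; since $\tau\mapsto P_\tau$ is injective this gives $\tau=\sigma$, and therefore $\varphi=\ev_\sigma$.

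The computation is short, and there is no serious obstacle; the only point requiring care is the well-posedness flagged just before the statement, namely that $\ev_\sigma$ can be the zero map when $\sigma\in S_N\setminus G$. This is automatically handled here: the hypothesis that $\varphi$ is an honest state with $\Phi(\varphi)=P_\sigma$ forces (through the factorisation above) that the permutation produced is $\tau=\sigma$ and lies in $G$, so $\ev_\sigma$ is genuinely a nonzero character and the identity $\varphi=\ev_\sigma$ makes sense. An alternative route — observing that $\varphi$ and $\ev_\sigma$ are both $\ast$-homomorphisms agreeing on the generators $u_{ij}$, which generate $C(\mathbb{G})$ — yields the same conclusion once one has confirmed in this way that $\ev_\sigma$ is nonzero.
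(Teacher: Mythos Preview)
Your proposal is correct and takes essentially the same approach as the paper: both invoke Proposition~\ref{character} to conclude that $\varphi$ is a character, and then identify which character it is. The paper does the identification by noting that the two $\ast$-homomorphisms $\varphi$ and $\ev_\sigma$ agree on the generators $u_{ij}$ (exactly the ``alternative route'' you mention at the end), whereas your primary write-up packages the same step via the Gelfand factorisation $\varphi=\ev_\tau$ followed by $P_\tau=P_\sigma$; your explicit handling of the well-posedness of $\ev_\sigma$ is a welcome clarification but not a different method.
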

\begin{proof}
Suppose that $\Phi(\varphi)=P_\sigma$. We know that $\operatorname{ev}_{\sigma}$ is a $\ast$-homomorphism, and by Proposition \ref{character} so is $\varphi$. As $C(\mathbb{G})$ admits a character, $\pi_{\text{ab}}$ is non-zero. Furthermore, as $\ast$-homomorphisms they are determined by their values on the generators:
$$\varphi(u_{ij})=\Phi(\varphi)_{ij}=\sigma_{ij}=\delta_{i,\sigma(j)}=\mathds{1}_{j\rightarrow i}(\sigma)=\pi_{\text{ab}}(u_{ij})(\sigma)=\operatorname{ev}_\sigma(u_{ij}).$$
\end{proof}
The \emph{classical version} of $\mathbb{G}$ is therefore the finite group $G\leq S_N$  given by:
$$G:=\{\operatorname{ev}_\sigma\,\colon\,\sigma\in S_N,\,\operatorname{ev}_\sigma\neq 0\}.$$

References to $u_{ij}$ in the below are in the embedding:
$$C(\mathbb{G})\subseteq C(\mathbb{G})^{**}.$$
\begin{proposition}\label{charsupport}
Associated to each character $\operatorname{ev}_\sigma$ on $C(\mathbb{G})$    is a \emph{support projection} $p_\sigma\in C(\mathbb{G})^{**}$ such that:
\begin{enumerate}
  \item[(i)] $p_{\sigma}$ is a central projection in $C(\mathbb{G})^{**}$, and $p_{\sigma}p_{\tau}=\delta_{\sigma,\tau}p_\sigma$.
  \item[(ii)]   $p_{\sigma}=u_{\sigma(1),1}\wedge u_{\sigma(2),2}\wedge\ldots\wedge u_{\sigma(N),N}.$

\end{enumerate}\end{proposition}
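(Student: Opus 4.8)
The plan is to let $p_\sigma$ be the support projection of the character $\ev_\sigma$, in the sense of Section \ref{bid}, and to establish (i) and (ii) separately; (ii) will not logically require (i). For (i), I would pass to the normal extension $\omega_\sigma$ of $\ev_\sigma$ to $C(\mathbb{G})^{**}$, which is again a character, hence multiplicative. Since $\omega_\sigma$ is multiplicative and normal, its kernel $\ker\omega_\sigma$ is a two-sided ideal (from $\omega_\sigma(xy)=\omega_\sigma(x)\omega_\sigma(y)$) that is $\sigma$-weakly closed (by normality) and of codimension one. A $\sigma$-weakly closed two-sided ideal has the form $C(\mathbb{G})^{**}(\mathds{1}_{\mathbb{G}}-p_\sigma)$ for a central projection $p_\sigma$, and codimension one forces $C(\mathbb{G})^{**}p_\sigma=\mathbb{C}p_\sigma$, so $p_\sigma$ is a minimal central projection; since then $p_\sigma C(\mathbb{G})^{**}p_\sigma=\mathbb{C}p_\sigma$ has no projections other than $0$ and $p_\sigma$, and $\omega_\sigma(p_\sigma)=1$, this $p_\sigma$ is exactly the support of $\omega_\sigma$.

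For the orthogonality relations, centrality of $p_\sigma$ gives $\omega_\tau(p_\sigma)=\omega_\tau(p_\sigma)^2\in\{0,1\}$; were it $1$ then $p_\tau\le p_\sigma$, and minimality would give $p_\tau=p_\sigma$, whence $\ev_\tau=\ev_\sigma$ and $\tau=\sigma$. Thus $p_\sigma p_\tau=\delta_{\sigma,\tau}p_\sigma$. For (ii), I would set $r:=u_{\sigma(1),1}\wedge\cdots\wedge u_{\sigma(N),N}$, which exists as the $\sigma$-weak limit $\lim_{n}[(u_{\sigma(1),1}\cdots u_{\sigma(N),N})^n]$ by (\ref{altern}). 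Using that $\omega_\sigma$ is normal and that the iterated products lie in the unit ball, $\omega_\sigma(r)=\lim_n\omega_\sigma\big((u_{\sigma(1),1}\cdots u_{\sigma(N),N})^n\big)$; multiplicativity together with $\ev_\sigma(u_{\sigma(i),i})=1$ makes every term equal to $1$, so $\omega_\sigma(r)=1$ and hence $p_\sigma\le r$ by minimality of the support. Because $p_\sigma\le r$, the two projections commute and $r-p_\sigma$ is itself a projection, so it suffices to show $r-p_\sigma=0$.

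Suppose $r-p_\sigma\neq 0$. Then there is a normal state $\psi$ on $C(\mathbb{G})^{**}$ with $\psi(r-p_\sigma)=1$, forcing $\psi(r)=1$ and $\psi(p_\sigma)=0$. The key point is that a normal state is precisely a canonical extension, so $\psi=\omega_\varphi$ for its restriction $\varphi:=\psi|_{C(\mathbb{G})}$. From $r\le u_{\sigma(i),i}$ and $\omega_\varphi(r)=1$ I get $\varphi(u_{\sigma(i),i})=1$ for every $i$; as $\Phi(\varphi)$ is doubly stochastic with nonnegative entries this forces $\Phi(\varphi)=P_\sigma$, and Proposition \ref{charactertwo} gives $\varphi=\ev_\sigma$. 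But then $\psi=\omega_\varphi=\omega_\sigma$, so $\psi(p_\sigma)=\omega_\sigma(p_\sigma)=1$, contradicting $\psi(p_\sigma)=0$. Hence $r=p_\sigma$.

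The main obstacle I anticipate is the bookkeeping between arbitrary states on the von Neumann algebra $C(\mathbb{G})^{**}$ and genuinely normal ones: the contradiction in (ii) works only because a nonzero projection is witnessed by a \emph{normal} state, and normal states are exactly the extensions $\omega_\varphi$ of states $\varphi$ on $C(\mathbb{G})$, which is what licenses the Birkhoff-slice rigidity of Proposition \ref{charactertwo}. One must likewise be careful that the $\sigma$-weak convergence in (\ref{altern}) may be fed through $\omega_\sigma$ only because the iterated products sit in the unit ball, where $\sigma$-strong convergence implies $\sigma$-weak convergence.
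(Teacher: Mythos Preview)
Your proof is correct. For (ii) you follow essentially the same line as the paper: show $p_\sigma\le r$ via multiplicativity of $\omega_\sigma$ on the $\sigma$-weak limit of the alternating products, then derive a contradiction from a state witnessing $r-p_\sigma\neq 0$ by invoking Proposition~\ref{charactertwo}. You are more explicit than the paper about taking the witnessing state to be \emph{normal}; this is precisely what licenses the identification $\psi=\omega_\sigma$ and is the point you rightly flag in your final paragraph.

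For (i) the approaches genuinely diverge. The paper works concretely with the magic unitary: from $\ev_\sigma(u_{\sigma(j),j})=1$ it gets $p_\sigma\le u_{\sigma(j),j}$ and $p_\sigma u_{ij}=0$ for $i\neq\sigma(j)$, so $p_\sigma$ commutes with every generator $u_{ij}$, hence with all of $C(\mathbb{G})$, hence (via Sherman--Takeda) with all of $C(\mathbb{G})^{**}$; orthogonality is then the one-line computation $p_\sigma p_\tau=p_\sigma u_{\sigma(j),j}u_{\tau(j),j}p_\tau=0$ at any $j$ with $\sigma(j)\neq\tau(j)$. Your route is more abstract and does not touch the generators: the normal extension $\omega_\sigma$ is a character on $C(\mathbb{G})^{**}$, so its kernel is a $\sigma$-weakly closed two-sided ideal of codimension one, hence equals $C(\mathbb{G})^{**}(\mathds{1}_{\mathbb{G}}-p_\sigma)$ for a minimal central projection, which you then identify with the support. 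The paper's argument exhibits the explicit commutation relations with the $u_{ij}$ (used later in Theorem~\ref{glp}); yours has the virtue of applying verbatim to the support of any character on any unital $\mathrm{C}^*$-algebra.
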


\begin{proof}
  \begin{enumerate}

    \item[(i)]  Follows from the fact the $p_\sigma$ are support projections of characters.

    \item[(ii)]  Let $$q_\sigma=u_{\sigma(1),1}\wedge u_{\sigma(2),2}\wedge\ldots\wedge u_{\sigma(N),N}.$$  Define $$f_\sigma:=u_{\sigma(1),1}\cdots u_{\sigma(N),N}.$$ The sequence $(f_{\sigma}^n)_{n\geq 1}\subset C(\mathbb{G})$ converges $\sigma$-weakly to $q_\sigma$. The extension $\omega_\sigma$ of $\operatorname{ev}_\sigma$ is a character implying that:
    \begin{align*}
      \omega_\sigma(q_\sigma) & =\lim_{n\rightarrow \infty}\omega_\sigma(f_{\sigma}^n) =1 \implies p_\sigma\leq q_\sigma.
    \end{align*}
    Suppose $r:=q_{\sigma}-p_\sigma$ is non-zero. Then there exists a state $\omega_r$ on $C(\mathbb{G})^{**}$ such that $\omega_r(r)=1$.
    Define a state $\varphi_r$ on $C(\mathbb{G})$ by:
    $$\varphi_r(f)=\omega_r(rfr)\qquad (f\in C(\mathbb{G})).$$
    Then $\varphi_r(u_{\sigma(j),j})=1\implies \varphi_r=\operatorname{ev}_\sigma$,
 by Proposition \ref{charactertwo}, with equal extensions $\omega_r$ and $\omega_\sigma$. However, in this case
 $$\omega_\sigma(p_\sigma)=\omega_r(p_\sigma)=0,$$
 and this contradiction gives $q_\sigma=p_\sigma$.

     \end{enumerate}
\end{proof}
 In the following, whenever $\operatorname{ev}_{\sigma}=0$, then so is $p_{\sigma}$. Properties of the bidual summarised in Section \ref{bid} are used.

\begin{theorem}\label{glp}
Where $G\leq \mathbb{G}$ is the classical version, define
$$p_G:=\sum_{\sigma\in G}p_{\sigma}.$$
Then $p_{G}$ is a group-like projection in $C(\mathbb{G})^{**}$. In addition, $p_G$ is the support projection of the Haar idempotent $h_{C(G)}\circ \pi_{\text{ab}}$.
\end{theorem}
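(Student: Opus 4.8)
The plan is to prove the two assertions in turn, deriving the group-like identity from a single ``translation'' formula for the character support projections $p_\sigma$ of Proposition \ref{charsupport}, and then identifying the support of $\phi_G:=h_{C(G)}\circ\pi_{\text{ab}}$ directly. For the group-like property, recall first that the characters compose as $\ev_\sigma\star\ev_\rho=\ev_{\sigma\rho}$, so $G$ is a group under convolution and $\sigma\mapsto\sigma\rho^{-1}$ permutes $G$ for each fixed $\rho\in G$. Since $p_G=\sum_{\sigma\in G}p_\sigma$ is a finite sum and $\Delta^{**}$ is linear, it suffices to establish
$$\Delta^{**}(p_\sigma)(\ind_{\mathbb{G}}\otimes p_\rho)=p_{\sigma\rho^{-1}}\otimes p_\rho\qquad(\sigma,\rho\in G).$$
Granting this, expanding $\Delta^{**}(p_G)(\ind_{\mathbb{G}}\otimes p_G)=\sum_{\sigma,\rho\in G}\Delta^{**}(p_\sigma)(\ind_{\mathbb{G}}\otimes p_\rho)$ and reindexing the inner sum by $\mu=\sigma\rho^{-1}$ gives $\sum_{\rho\in G}p_G\otimes p_\rho=p_G\otimes p_G$, which is the group-like relation; moreover $p_G\neq 0$, since $e\in G$ (the counit is the character $\ev_e$) forces $p_G\geq p_e\neq 0$.

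To prove the translation identity I would introduce the right convolution $R_\rho:=(\mathrm{id}\otimes\ev_\rho)\circ\Delta$ on $C(\mathbb{G})$. A slice by a character is multiplicative, so $R_\rho$ is a unital $\ast$-homomorphism, and on generators $R_\rho(u_{ij})=u_{i,\rho(j)}$. Its normal extension $R_\rho^{**}$ preserves the meets computed by the alternating-product formula (\ref{altern}), so using Proposition \ref{charsupport}(ii) and reindexing $k=\rho(j)$,
$$R_\rho^{**}(p_\sigma)=\bigwedge_{j=1}^N R_\rho(u_{\sigma(j),j})=\bigwedge_{j=1}^N u_{\sigma(j),\rho(j)}=\bigwedge_{k=1}^N u_{(\sigma\rho^{-1})(k),k}=p_{\sigma\rho^{-1}}.$$
It remains to connect $R_\rho^{**}$ to right multiplication by $\ind_{\mathbb{G}}\otimes p_\rho$. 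Centrality of $p_\rho$ and the relations $u_{ij}p_\rho=\ev_\rho(u_{ij})p_\rho$ from Proposition \ref{charsupport}(i) give, by norm-continuity, $g\,p_\rho=\ev_\rho(g)p_\rho$ for every $g\in C(\mathbb{G})$. Writing $p_\sigma$ as the $\sigma$-weak limit of $f_\sigma^n$ with $f_\sigma=\prod_j u_{\sigma(j),j}$, and applying $g\,p_\rho=\ev_\rho(g)p_\rho$ in the second leg (in Sweedler notation), one finds $\Delta(f_\sigma^n)(\ind_{\mathbb{G}}\otimes p_\rho)=R_\rho(f_\sigma^n)\otimes p_\rho$; passing to the $\sigma$-weak limit via separate $\sigma$-weak continuity of the product and normality of $\Delta^{**}$ and $R_\rho^{**}$ yields $\Delta^{**}(p_\sigma)(\ind_{\mathbb{G}}\otimes p_\rho)=R_\rho^{**}(p_\sigma)\otimes p_\rho=p_{\sigma\rho^{-1}}\otimes p_\rho$. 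This mirrors the technique of Proposition \ref{support}.

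For the second assertion, since $G$ is finite the Haar state of $C(G)$ is the average over $G$, whence $\phi_G=\frac{1}{|G|}\sum_{\tau\in G}\ev_\tau$ and, by normality of the extension, $\omega_{\phi_G}=\frac{1}{|G|}\sum_{\tau\in G}\omega_\tau$. The support projection of a convex combination of normal states with strictly positive weights is the supremum of the individual supports; as the $p_\tau$ are pairwise orthogonal central projections by Proposition \ref{charsupport}(i), this supremum is $\sum_{\tau\in G}p_\tau=p_G$. Hence $p_{\phi_G}=p_G$.

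The main obstacle is the bookkeeping in the bidual of the minimal tensor product $(C(\mathbb{G})\otimes C(\mathbb{G}))^{**}$: one must verify that right multiplication by $\ind_{\mathbb{G}}\otimes p_\rho$, the normal slice $\mathrm{id}\otimes\omega_\rho$, and the tensoring map $Y\mapsto Y\otimes p_\rho$ are all $\sigma$-weakly continuous, so that the elementary-tensor identity $X(\ind_{\mathbb{G}}\otimes p_\rho)=(\mathrm{id}\otimes\omega_\rho)(X)\otimes p_\rho$ persists to the $\sigma$-weak limit. All remaining ingredients---the wedge formula, centrality and orthogonality of the $p_\sigma$, and the homomorphism property of $R_\rho$---are already available, so the proof reduces to carrying out these limit arguments carefully.
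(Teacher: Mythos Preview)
Your argument is correct and follows essentially the same route as the paper: both establish the key translation identity $\Delta^{**}(p_\sigma)(\ind_{\mathbb{G}}\otimes p_\rho)=p_{\sigma\rho^{-1}}\otimes p_\rho$ by exploiting the absorption $g\,p_\rho=\ev_\rho(g)p_\rho$ together with the alternating-product formula for the meet, and then sum over $\sigma,\rho\in G$. Your packaging via the translation homomorphism $R_\rho=(\mathrm{id}\otimes\ev_\rho)\circ\Delta$ is a tidy conceptual wrapper for exactly the computation the paper carries out by hand (expanding $\Delta(u_{\sigma(1),1})\cdots\Delta(u_{\sigma(N),N})(\ind_{\mathbb{G}}\otimes p_\tau)$ term by term and then iterating); the limiting manoeuvres in the bidual, and the caveats you flag about them, are identical in both. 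For the support-projection claim the paper instead invokes the direct-sum decomposition $C(\mathbb{G})^{**}\cong C(G)\oplus N_{\text{ab}}^{**}$ from (\ref{directsum}), whereas your argument via $\phi_G=\tfrac{1}{|G|}\sum_\tau\ev_\tau$ and orthogonal supports is a more elementary, equally valid route to the same conclusion.
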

\begin{proof}
Note $p_G$ is non-zero, as $p_{\varepsilon}p_G=p_{\varepsilon}$.
 Consider $p_{\sigma}\neq 0$. Let $(p^{\lambda}_\sigma)\subset \mathcal{O}(\mathbb{G})$  converge $\sigma$-weakly to $p_{\sigma}\in C(\mathbb{G})^{**}$. The extension of $\Delta$ is $\sigma$-weakly continuous, and recall that $p_\sigma$ is a meet of projections in $\mathcal{O}(\mathbb{G})$:
\begin{align*}
  \Delta^{**}(p_\sigma) & =\Delta^{**}(u_{\sigma(1),1}\wedge u_{\sigma(2),2}\wedge\cdots\wedge u_{\sigma(N),N}) \\
   & =\Delta(u_{\sigma(1),1})\wedge \Delta(u_{\sigma(2),2})\wedge\cdots\wedge\Delta(u_{\sigma(N),N}) \\
   & =\lim_{n\to \infty}\left[\Delta(u_{\sigma(1),1}) \Delta(u_{\sigma(2),2})\cdots\Delta(u_{\sigma(N),N})\right]^n
\end{align*}
Consider, for $p_\tau\neq 0$
\begin{align*}
&  \Delta(u_{\sigma(1),1}) \Delta(u_{\sigma(2),2})\cdots\Delta(u_{\sigma(N),N})(\mathds{1}_{\mathbb{G}}\otimes p_\tau) \\&=\left(\sum_{k_1,\dots,k_N=1}^Nu_{\sigma(1),k_1}u_{\sigma(2),k_2}\cdots u_{\sigma(N),k_N}\otimes u_{k_1,1}u_{k_2,2}\cdots u_{k_N,N}\right)(\mathds{1}_{\mathbb{G}}\otimes p_\tau)
\end{align*}
Note $p_\tau$ is central and
$$p_\tau u_{kj}=\begin{cases}
                  p_\tau u_{kj}, & \mbox{if } k=\tau(j) \\
                  0, & \mbox{otherwise}
                \end{cases},$$
                and so
                \begin{align*}
                &\Delta(u_{\sigma(1),1}) \Delta(u_{\sigma(2),2})\cdots\Delta(u_{\sigma(N),N})(\mathds{1}_{\mathbb{G}}\otimes p_\tau)
                \\&=u_{\sigma(1),\tau(1)}u_{\sigma(2),\tau(2)}\cdots u_{\sigma(N),\tau(N)}\otimes u_{\tau(1),1}u_{\tau(2),2}\cdots u_{\tau(N),N}p_{\tau}
                \\&=(u_{\sigma(1),\tau(1)}u_{\sigma(2),\tau(2)}\cdots u_{\sigma(N),\tau(N)}\otimes u_{\tau(1),1}u_{\tau(2),2}\cdots u_{\tau(N),N})(\mathds{1}_{\mathbb{G}}\otimes p_{\tau})
                \end{align*}
                Now
\begin{align*}
\Delta^{**}(p_\sigma)(\mathds{1}_{\mathbb{G}}\otimes p_\tau)&=\lim_{n\to\infty}\left[\Delta(u_{\sigma(1),1}) \Delta(u_{\sigma(2),2})\cdots\Delta(u_{\sigma(N),N})\right]^n(\mathds{1}_{\mathbb{G}}\otimes p_\tau)
\\ &=\lim_{n\to\infty}\left[\left(\Delta(u_{\sigma(1),1}) \Delta(u_{\sigma(2),2})\cdots\Delta(u_{\sigma(N),N})\right)^n(\mathds{1}_{\mathbb{G}}\otimes p_\tau)\right]
\\ &=\lim_{n\to\infty}\left[\left(\Delta(u_{\sigma(1),1}) \Delta(u_{\sigma(2),2})\cdots\Delta(u_{\sigma(N),N})\right)^n(\mathds{1}_{\mathbb{G}}\otimes p_\tau)^n\right]
\\&=\lim_{n\to\infty}\left[\left(\Delta(u_{\sigma(1),1}) \Delta(u_{\sigma(2),2})\cdots\Delta(u_{\sigma(N),N})\right)(\mathds{1}_{\mathbb{G}}\otimes p_\tau)\right]^n
\\&=\lim_{n\to\infty}\left[(u_{\sigma(1),\tau(1)}u_{\sigma(2),\tau(2)}\cdots u_{\sigma(N),\tau(N)}\otimes u_{\tau(1),1}u_{\tau(2),2}\cdots u_{\tau(N),N})(\mathds{1}_{\mathbb{G}}\otimes p_{\tau})\right]^n
\\&=\lim_{n\to\infty}\left[(u_{\sigma(1),\tau(1)}u_{\sigma(2),\tau(2)}\cdots u_{\sigma(N),\tau(N)}\otimes u_{\tau(1),1}u_{\tau(2),2}\cdots u_{\tau(N),N})^n(\mathds{1}_{\mathbb{G}}\otimes p_{\tau})^n\right]
\\&=\lim_{n\to\infty}\left[(u_{\sigma(1),\tau(1)}u_{\sigma(2),\tau(2)}\cdots u_{\sigma(N),\tau(N)}\otimes u_{\tau(1),1}u_{\tau(2),2}\cdots u_{\tau(N),N})^n(\mathds{1}_{\mathbb{G}}\otimes p_{\tau})\right]
\\&=\lim_{n\to\infty}\left[u_{\sigma(1),\tau(1)}u_{\sigma(2),\tau(2)}\cdots u_{\sigma(N),\tau(N)}\otimes u_{\tau(1),1}u_{\tau(2),2}\cdots u_{\tau(N),N})^n\right](\mathds{1}_{\mathbb{G}}\otimes p_{\tau})
\\&=(p_{\sigma\tau^{-1}}\otimes p_{\tau})(\mathds{1}_{\mathbb{G}}\otimes p_{\tau})=p_{\sigma\tau^{-1}}\otimes p_{\tau},
\end{align*}
the last equality following from the fact that if bounded nets $(f_i),\,(g_i)$ in von Neumann algebras converge to $f,\,g$ $\sigma$-weakly, then the net $(f_i\otimes g_i)$ converges $\sigma$-weakly to $f\otimes g$ in the von Neumann tensor product. Finally, sum $\Delta^{**}(p_\sigma)(\mathds{1}_{\mathbb{G}}\otimes p_\tau)$ over $\sigma,\,\tau\in G$.

\bigskip

Note that $C(G)=C(\mathbb{G})/N_{\text{ab}}$ is finite dimensional, and so by (\ref{directsum}):
$$C(\mathbb{G})^{**}\cong C(G)\oplus N_{\text{ab}}^{**}.$$
It follows that the support projection of $h_{C(G)}\circ \pi_{\text{ab}}$ is $p_G$.
\end{proof}

\subsection{The (classically) random and truly quantum parts of a quantum permutation}
In the case of $C(S_N^+)$, define $p_C:=p_{S_N}$ and $p_Q:=\mathds{1}_{S_N^+}-p_C$. Recall:
$$\varphi\in S_N^+ \text{ is a quantum permutation}\iff\varphi\text{ a state on }C(S_N^+).$$
\begin{definition}
Let $\varphi\in S_N^+$ be a quantum permutation. Say that $\varphi$
\begin{enumerate}
  \item[(i)] is a \emph{(classically) random permutation} if $\omega_\varphi(p_Q)=0$,
  \item[(ii)] is a \emph{genuinely quantum permutation} if $\omega_\varphi(p_Q)>0$,
  \item[(iii)] is a \emph{mixed quantum permutation} if  $0<\omega_\varphi(p_Q)<1$,
  \item[(iv)] is a \emph{truly quantum permutation} if $\omega_\varphi(p_Q)=1$.
\end{enumerate}
\end{definition}
Random permutations are in bijection with probability measures $\nu\in M_p(S_N)$:
\begin{align*}
\varphi\text{ random }&\iff \varphi=\varphi_\nu\text{ where }\\
\varphi_\nu(f)&:=\sum_{\sigma\in S_N}\pi_{\text{ab}}(f)(\sigma)\nu(\{\sigma\})\qquad (f\in C(S_N^+)).
\end{align*}

\begin{theorem}\label{absorb}
Suppose $h_{S_N}$ is the state on $C(S_N^+)$ defined by $h_{C(S_N)}\circ \pi_{\text{ab}}$. Then if
$$\varphi\star h_{S_N}=h_{S_N}=h_{S_N}\star \varphi,$$
$\varphi$ is a random permutation.
\end{theorem}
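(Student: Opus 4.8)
The plan is to recognise this as an essentially immediate consequence of the group-like structure of the support projection of $h_{S_N}$ established earlier. First I would note that $h_{S_N} = h_{C(S_N)} \circ \pi_{\text{ab}}$ is a Haar idempotent, hence in particular an idempotent state on $C(S_N^+)$. Since the classical version of $S_N^+$ is all of $S_N$ (every character $\ev_\sigma$ with $\sigma \in S_N$ is non-zero, because $S_N \subseteq S_N^+$), Theorem \ref{glp} applies and tells us that the support projection of $h_{S_N}$ is exactly $p_C = p_{S_N} = \sum_{\sigma \in S_N} p_\sigma$, and that this projection is group-like in $C(S_N^+)^{**}$.

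With this identification made, the argument is short. The hypothesis $\varphi \star h_{S_N} = h_{S_N} = h_{S_N} \star \varphi$ says precisely that $\varphi$ belongs to the quasi-subgroup $\mathbb{S}_{h_{S_N}}$. As $h_{S_N}$ is an idempotent state whose support projection $p_C$ is group-like, Theorem \ref{pqprops} yields the inclusion $\mathbb{S}_{h_{S_N}} \subseteq \{\varphi \in S_N^+ : \omega_\varphi(p_C) = 1\}$. Hence $\omega_\varphi(p_C) = 1$, and since $p_Q = \mathds{1}_{S_N^+} - p_C$ this forces $\omega_\varphi(p_Q) = 0$, which is exactly the defining condition for $\varphi$ to be a (classically) random permutation.

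Because the substantive analytic content has already been packaged into Theorems \ref{glp} and \ref{pqprops}, I do not expect any genuine obstacle: the statement is effectively a corollary. If one wanted a self-contained argument bypassing Theorem \ref{pqprops}, the step to reproduce would be the one at the core of its proof --- assuming $\omega_\varphi(q) > 0$ for $q = \mathds{1}_{S_N^+} - p_C$, collapsing $\varphi$ to $\widetilde{q}\varphi \in \mathbb{S}_{h_{S_N}}$ via Lemma \ref{van2.2}, and then applying $\omega_{\widetilde{q}\varphi} \otimes \omega_{h_{S_N}}$ to the group-like identity $\Delta^{**}(p_C)(\mathds{1}_{S_N^+} \otimes p_C) = p_C \otimes p_C$ to contradict $\omega_{h_{S_N}}(p_C) = 1$. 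A third route is Remark \ref{Vaes}: every element of $\mathbb{S}_{h_{S_N}}$ has the form $\varphi_0 \circ \pi_{\text{ab}}$ for a state $\varphi_0$ on $C(S_N)$, and such states are manifestly random.
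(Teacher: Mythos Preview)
Your proposal is correct and matches the paper's approach exactly: the paper's proof is the single line ``This follows from Theorem~\ref{pqprops}'', relying (implicitly) on Theorem~\ref{glp} to identify the support projection of $h_{S_N}$ with the group-like projection $p_C$. Your additional remarks on alternative routes (reproducing the core of Theorem~\ref{pqprops}, or invoking Remark~\ref{Vaes}) are also in line with the paper, which notes elsewhere that Vaes's observation yields this theorem as a special case.
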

\begin{proof}
This follows from Theorem \ref{pqprops}.

\end{proof}

\subsection{Exotic quasi-subgroups}
\begin{theorem}\label{idemtotty}
Let $\varphi\in S_N^+$ be genuinely quantum, $\omega_{\varphi}(p_Q)>0$, and $h_{S_N}\in S_N^+$ the Haar idempotent $h_{C(S_N)}\circ \pi_{\text{ab}}$. Form the idempotent $\phi_\varphi$ from the weak-* limit of Ces\`{a}ro means of $\varphi$, and then define an idempotent:
\begin{equation}\phi:=w^*\text{-}\lim_{n\to\infty}\frac{1}{n}\sum_{k=1}^n(h_{S_N}\star \phi_\varphi)^{\star k}.\label{idempot}\end{equation}
Then the quasi-subgroup generated satisfies:
 $$S_N\subsetneq \mathbb{S}_{\phi}\subseteq S_N^+.$$
 \end{theorem}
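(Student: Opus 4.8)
The inclusion $\mathbb{S}_{\phi}\subseteq S_N^+$ is immediate, since $\mathbb{S}_\phi$ is by definition a set of states on $C(S_N^+)$; the content lies entirely in the left-hand inclusion. My plan is to show that $\mathbb{S}_\phi$ contains both of the idempotents $h_{S_N}$ and $\phi_\varphi$, and then to exploit the following monotonicity principle: if $\eta$ is idempotent and $\eta\in\mathbb{S}_\phi$, then $\mathbb{S}_\eta\subseteq\mathbb{S}_\phi$. This principle is pure associativity: for $\rho\in\mathbb{S}_\eta$,
$$\rho\star\phi=\rho\star(\eta\star\phi)=(\rho\star\eta)\star\phi=\eta\star\phi=\phi,$$
and symmetrically $\phi\star\rho=\phi$. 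Granting the two memberships, it follows that $S_N=\mathbb{S}_{h_{S_N}}\subseteq\mathbb{S}_\phi$ (recall that the quasi-subgroup of the universal Haar idempotent $h_{S_N}$ is precisely the set of random permutations, by Remark \ref{Vaes}), while $\varphi\in\mathbb{S}_{\phi_\varphi}\subseteq\mathbb{S}_\phi$; as $\varphi$ is genuinely quantum, $\omega_\varphi(p_Q)>0$, it is not a random permutation, so $\varphi\notin S_N$ and the inclusion is strict. Note that strictness is witnessed by $\varphi$ itself, so there is no need to analyse the quantum part of $\phi$ or of $\phi_\varphi$.

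It remains to prove $h_{S_N}\in\mathbb{S}_\phi$ and $\phi_\varphi\in\mathbb{S}_\phi$. Write $\psi:=h_{S_N}\star\phi_\varphi$, so that $\phi$ is the idempotent furnished by the Ces\`{a}ro construction of Lemma \ref{van2.1} applied to $\psi$; in particular $\phi$ is idempotent, whence $\phi\circ S=\phi$ by Lemma \ref{idemS}. Because $h_{S_N}$ and $\phi_\varphi$ are themselves idempotent, associativity yields the one-sided fixed-point relations
$$h_{S_N}\star\psi=h_{S_N}\star h_{S_N}\star\phi_\varphi=\psi,\qquad \psi\star\phi_\varphi=h_{S_N}\star\phi_\varphi\star\phi_\varphi=\psi.$$
Iterating these gives $h_{S_N}\star\psi^{\star k}=\psi^{\star k}$ and $\psi^{\star k}\star\phi_\varphi=\psi^{\star k}$ for every $k$; averaging over $k$ and passing to the weak-* limit, using the weak-* continuity of one-sided convolution (the lemma following Theorem \ref{absorb}), I obtain
$$h_{S_N}\star\phi=\phi\qquad\text{and}\qquad \phi\star\phi_\varphi=\phi.$$

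The delicate point — and the step I expect to be the main obstacle — is that $\psi=h_{S_N}\star\phi_\varphi$ is not symmetric, so the two \emph{opposite} identities $\phi\star h_{S_N}=\phi$ and $\phi_\varphi\star\phi=\phi$, which the memberships also demand, are not available directly. Here I would exploit that $S_N^+$ is of Kac type: $S^2=\operatorname{id}$, so $\rho\mapsto\rho\circ S$ is an involution, and by Lemma \ref{idemS} the idempotents satisfy $h_{S_N}\circ S=h_{S_N}$, $\phi_\varphi\circ S=\phi_\varphi$, $\phi\circ S=\phi$. Applying $\circ S$ to $h_{S_N}\star\phi=\phi$ and using the anti-multiplicativity of the reverse (Proposition \ref{reverse}) gives $\phi\star h_{S_N}=(h_{S_N}\star\phi)\circ S=\phi\circ S=\phi$; likewise applying $\circ S$ to $\phi\star\phi_\varphi=\phi$ gives $\phi_\varphi\star\phi=\phi$. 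This establishes $h_{S_N},\phi_\varphi\in\mathbb{S}_\phi$, and with the monotonicity principle above the theorem follows. Thus the whole argument rests on two short associativity computations, a mean-ergodic passage to the limit, and the antipode as the device that converts each one-sided invariance into its mirror image.
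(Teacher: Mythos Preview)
Your argument is correct, and indeed the core device---using $\phi\circ S=\phi$ together with Proposition~\ref{reverse} to upgrade a one-sided absorption identity to its mirror image---is exactly what the paper uses as well. The two proofs differ in packaging, though. For the inclusion $S_N\subseteq\mathbb{S}_\phi$, the paper works pointwise with the characters: it checks $\ev_\sigma\star\phi_n=\phi_n$ for each Ces\`aro mean and each $\sigma$, passes to the limit, and then uses the antipode (via $\sigma\mapsto\sigma^{-1}$) to obtain the other side. You instead prove the single membership $h_{S_N}\in\mathbb{S}_\phi$ and then invoke the clean monotonicity principle $\eta\in\mathbb{S}_\phi\Rightarrow\mathbb{S}_\eta\subseteq\mathbb{S}_\phi$, which is a nice abstraction that makes the lattice structure of quasi-subgroups do the work.

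For strictness the two arguments genuinely diverge. The paper argues by contradiction that the idempotent $\phi$ itself is genuinely quantum: if $\phi$ were random then $h_{S_N}\star\phi=h_{S_N}$, and combining this with $\phi\star\varphi=\phi$ forces $h_{S_N}\star\varphi=h_{S_N}$, contradicting Theorem~\ref{absorb}. You instead exhibit $\varphi$ directly as a non-random element of $\mathbb{S}_\phi$, via $\varphi\in\mathbb{S}_{\phi_\varphi}\subseteq\mathbb{S}_\phi$. Your route is shorter and avoids the contradiction, but note that the paper's detour buys a little extra: it shows that $\phi$ itself (not merely some element of $\mathbb{S}_\phi$) is genuinely quantum, which is the form needed for the remarks immediately following the theorem about $\phi$ possibly equalling the Haar state.
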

 \begin{proof}
For any $\sigma\in S_N$, and $\phi_n$ a Ces\`{a}ro mean of $(h_{S_N}\star \phi_\varphi)$, the weak-* continuity of convolution operators gives:
$$\operatorname{ev}_\sigma\star \phi_n=\phi_n\implies w^*\text{-}\lim_{n\to\infty}(\operatorname{ev}_\sigma\star\phi_n)=\phi\implies \operatorname{ev}_\sigma\star \phi=\phi\implies \phi\star\operatorname{ev}_{\sigma^{-1}}=\phi.$$
by Proposition \ref{reverse}. Similarly $\operatorname{ev}_{\sigma^{-1}}\star \phi_n\to \phi$ which implies that $\phi\star\operatorname{ev}_{\sigma}=\phi$, and so $S_N\subseteq \mathbb{S}_{\phi}$

\bigskip

Now suppose for the sake of contradiction that $\phi$ is random. Then
$$\phi\star h_{S_N}=h_{S_N}=h_{S_N}\star \phi.$$
However for all Ces\`{a}ro means $\phi_n$:
$$\phi_n\star \varphi=\phi_n\implies \phi\star \varphi=\phi\implies h_{S_N}\star \varphi=h_{S_N},$$
by left convolving both sides of  $ \phi\star \varphi=\phi$ with $h_{S_N}$. To apply Theorem \ref{absorb}, and conclude that $\varphi$ is random, it must also be shown that $\varphi\star h_{S_N}=h_{S_N}$. Note that inductively Proposition \ref{reverse} applies to convolutions of more than two states:
$$\phi_n\circ S=\frac{1}{n}\sum_{k=1}^n((\phi_\varphi\circ S)\star (h_{S_N}\circ S) )^{\star k}=\frac{1}{n}\sum_{k=1}^n(\phi_\varphi\star h_{S_N})^{\star k},$$
as $\phi_\varphi$ and $h_{S_N}$ are idempotent states. The weak*-limit of $\phi_n\circ S$ is $\phi\circ S=\phi$, and the preceding argument with a right convolution applied to $\phi_n\circ S$ yields  $\varphi\star h_{S_N}=h_{S_N}$.
 \end{proof}

If in fact for all genuinely quantum $\varphi\in S_N^+$ it is the case that $\mathbb{S}_{\phi}=S_N^+$ for $\phi$ given by (\ref{idempot}), then the maximality conjecture holds, and it is tenable to say that $h_{S_N}$ and \emph{any} genuinely quantum permutation $\varphi\in S_N^+$ generates  $S_N^+$.

\bigskip

\section{Convolution dynamics}
This section will explore, with respect to $p_Q\in C(S_N^+)^{**}$, the qualitative dynamics of quantum permutations under convolution.  The results of this section are  illustrated qualitatively in a phase diagram, Figure \ref{phase}. In fact the results of this section hold more generally. These results in fact hold for a classical subgroup $H\leq G$ under the condition that there exists $h_1,h_2,h_3,h_4\in H^c$ such that $h_1h_2\in H$ and $h_3h_4\in H^c$, with the  projection $\mathds{1}_{H^c}\in C(G)$ playing the role of $p_Q$.  They also hold more generally for other quantum subgroups $\mathbb{G}\leq S_N^+$  such that $h_{\mathbb{G}}:=h_{C(\mathbb{G})}\circ \pi_{C(\mathbb{G})}$ has group-like support projection $p_{\mathbb{G}}\in C(S_N^+)^{**}$, and there exists quantum permutations $\varphi_1,\,\varphi_2,\,\varphi_3,\,\varphi_4\in S_N^+$ supported  ``off'' $\mathbb{G}$  in the sense that $\omega_{\varphi_i}(p_{\mathbb{G}})=0$ such that:
  \begin{enumerate}
    \item[(i)] $\omega_{\varphi_1\star\varphi_2}(p_{\mathbb{G}})=0$,
    \item[(ii)] $\omega_{\varphi_3\star\varphi_3}(p_{\mathbb{G}})=1$.
  \end{enumerate}

\subsection{The convolution of random and truly quantum permutations}

\begin{lemma}\label{qlemma}
Suppose $p\in C(\mathbb{G})^{**}$ is a group-like projection. Then, where $q:=\mathds{1}_{\mathbb{G}}-p$:
$$\Delta^{**}(q)(q\otimes p)=q\otimes p.$$
\end{lemma}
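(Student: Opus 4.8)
The plan is to reduce everything to the defining group-like identity $\Delta^{**}(p)(\mathds{1}_{\mathbb{G}}\otimes p)=p\otimes p$ by exploiting that $q=\mathds{1}_{\mathbb{G}}-p$ together with the fact that $\Delta^{**}$ is a \emph{unital} $\ast$-homomorphism. As recalled in Section \ref{bid}, the extension of $\Delta$ to the bidual is a $\sigma$-weakly continuous unital $\ast$-homomorphism; in particular it is linear and satisfies $\Delta^{**}(\mathds{1}_{\mathbb{G}})=\mathds{1}_{\mathbb{G}}\otimes\mathds{1}_{\mathbb{G}}$. These are the only structural facts the argument needs.

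First I would substitute $q=\mathds{1}_{\mathbb{G}}-p$ and distribute using linearity:
\begin{align*}
\Delta^{**}(q)(\mathds{1}_{\mathbb{G}}\otimes p) &= \Delta^{**}(\mathds{1}_{\mathbb{G}})(\mathds{1}_{\mathbb{G}}\otimes p)-\Delta^{**}(p)(\mathds{1}_{\mathbb{G}}\otimes p).
\end{align*}
The first summand equals $(\mathds{1}_{\mathbb{G}}\otimes\mathds{1}_{\mathbb{G}})(\mathds{1}_{\mathbb{G}}\otimes p)=\mathds{1}_{\mathbb{G}}\otimes p$ by unitality of $\Delta^{**}$, while the second equals $p\otimes p$ by the group-like hypothesis. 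Hence the whole expression collapses to $\mathds{1}_{\mathbb{G}}\otimes p-p\otimes p=(\mathds{1}_{\mathbb{G}}-p)\otimes p=q\otimes p$, which is exactly the claimed identity.

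I do not expect any genuine obstacle: the statement is a one-line consequence of linearity and the group-like relation once $\Delta^{**}$ is known to be a unital homomorphism on the bidual. The only point deserving a moment's care is the legitimacy of performing these algebraic manipulations with $\Delta^{**}$ directly on elements of $C(\mathbb{G})^{**}$ (rather than on $C(\mathbb{G})$), but this is precisely what the summarized properties of the bidual in Section \ref{bid} license, since $\Delta^{**}$ is the $\sigma$-weakly continuous homomorphic extension of $\Delta$ and the tensor-product multiplication on $C(\mathbb{G})^{**}\otimes C(\mathbb{G})^{**}$ is the ambient von Neumann algebra product.
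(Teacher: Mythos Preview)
Your proof is correct and follows essentially the same route as the paper: expand $\Delta^{**}(\mathds{1}_{\mathbb{G}})(\mathds{1}_{\mathbb{G}}\otimes p)=\mathds{1}_{\mathbb{G}}\otimes p$ using linearity and unitality of $\Delta^{**}$, and subtract the group-like relation $\Delta^{**}(p)(\mathds{1}_{\mathbb{G}}\otimes p)=p\otimes p$. The paper phrases this as expanding $\Delta^{**}(p+q)(\mathds{1}_{\mathbb{G}}\otimes p)$ and then right-multiplying by $q\otimes p$, but the underlying computation is the same one-line subtraction you give.
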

\begin{proof}
  Expand
  $$\Delta^{**}(p+q)(\mathds{1}_{\mathbb{G}}\otimes p)=(\mathds{1}_{\mathbb{G}}\otimes p),$$
  then multiply on the right with $q\otimes p$.
\end{proof}
\begin{proposition}\label{rules}
\begin{enumerate}
  \item[(i)] The convolution of random permutations is random.
  \item[(ii)] The convolution of a truly quantum permutation and a random permutation is truly quantum.
  \item[(iii)] The convolution of a truly quantum permutations can be random, mixed, or truly quantum.
\end{enumerate}
\end{proposition}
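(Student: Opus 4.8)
The plan is to read everything through the decomposition $C(S_N^+)^{**}\cong C(S_N)\oplus N_{\text{ab}}^{**}$, under which $p_C=p_{S_N}$ is the unit of the classical summand and $p_Q=\ind_{S_N^+}-p_C$ the unit of the quantum summand, so that a quantum permutation $\varphi$ is random exactly when it is supported on $p_C$ and truly quantum exactly when it is supported on $p_Q$. Part (i) is then immediate: $p_C$ is a group-like projection by Theorem \ref{glp}, so if $\varphi_1,\varphi_2$ are random, i.e. supported on $p_C$, then Proposition \ref{support} gives that $\varphi_1\star\varphi_2$ is supported on $p_C$, hence random.

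For (ii) I would run the $\sigma$-weak approximation argument of Proposition \ref{support} together with Lemma \ref{qlemma}. Take $\varphi$ truly quantum and $\rho$ random, and a net $(q^\lambda)\subset\mathcal{O}(\mathbb{G})$ with $q^\lambda\to p_Q$ $\sigma$-weakly. Using $\sigma$-weak continuity of $\Delta^{**}$ and of $\omega_\varphi\otimes\omega_\rho$, and that $\rho$ supported on $p_C$ means $\omega_\rho(\cdot)=\omega_\rho(\cdot\, p_C)$:
\begin{align*}
\omega_{\varphi\star\rho}(p_Q)&=\lim_\lambda(\omega_\varphi\otimes\omega_\rho)(\Delta(q^\lambda))=(\omega_\varphi\otimes\omega_\rho)\big(\Delta^{**}(p_Q)(\ind_{\mathbb{G}}\otimes p_C)\big)\\
&=(\omega_\varphi\otimes\omega_\rho)(p_Q\otimes p_C)=\omega_\varphi(p_Q)\,\omega_\rho(p_C)=1,
\end{align*}
the middle equality by Lemma \ref{qlemma}. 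Hence $\varphi\star\rho$ is truly quantum. The reversed order $\rho\star\varphi$ I would obtain from the antipode: since $S(u_{ij})=u_{ji}$ gives $S^{**}(p_\sigma)=p_{\sigma^{-1}}$ and hence $S^{**}(p_C)=p_C$ and $S^{**}(p_Q)=p_Q$, the classes ``random'' and ``truly quantum'' are preserved by $\varphi\mapsto\varphi\circ S$; applying Proposition \ref{reverse} to $(\rho\star\varphi)\circ S=(\varphi\circ S)\star(\rho\circ S)$ reduces $\rho\star\varphi$ to the case just treated.

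Part (iii) is the substantive one, and is where I expect the real work. Here I would exhibit truly quantum permutations realising each of the three outcomes. The preamble to this section isolates exactly the data needed: truly quantum $\varphi_1,\varphi_2,\varphi_3$ with $\omega_{\varphi_1\star\varphi_2}(p_C)=0$ (a truly quantum product) and $\omega_{\varphi_3\star\varphi_3}(p_C)=1$ (a random product); these two directly witness the truly quantum and random outcomes. The \emph{mixed} outcome I would then manufacture by interpolation: the convex combinations $\tfrac12(\varphi_1+\varphi_3)$ and $\tfrac12(\varphi_2+\varphi_3)$ are again truly quantum (the value of $\omega_{(\cdot)}(p_Q)$ is affine and equals $1$ on each summand), while bilinearity of $\star$ gives
$$\omega_{\frac12(\varphi_1+\varphi_3)\star\frac12(\varphi_2+\varphi_3)}(p_C)=\tfrac14\big(0+a+b+1\big)\in(0,1)$$
for some $a,b\in[0,1]$, so this convolution is mixed.

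The hard part will be the explicit construction and verification of the truly quantum permutations above: one must produce noncommutative states on $C(S_N^+)$ genuinely supported on the quantum summand $N_{\text{ab}}^{**}$ whose products land, respectively, inside and outside the classical summand. This is precisely the phenomenon that $p_Q$ fails a group-like relation --- indeed the computation behind Theorem \ref{glp} shows $\Delta^{**}(p_\sigma)(\ind_{\mathbb{G}}\otimes p_\tau)=p_{\sigma\tau^{-1}}\otimes p_\tau$, so quantum mass can convolve back onto the classical part --- and it is what makes all three outcomes genuinely possible. For a concrete model I would first verify the hypotheses in the classical template $H\subsetneq G$ described at the start of this section (choosing $h_3h_4\in H$ for the random product and $h_1h_2\in H^c$ for the quantum product), and then transport the construction into $S_N^+$ via states on the commutator ideal.
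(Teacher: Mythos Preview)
Your treatment of (i) and (ii) matches the paper's approach almost exactly: the paper also applies $\omega_\varphi\otimes\omega_\rho$ to the identity $\Delta^{**}(p_Q)(\ind\otimes p_C)=p_Q\otimes p_C$ of Lemma~\ref{qlemma} after a $\sigma$-weak approximation, and for (i) simply says ``straightforward'' (your appeal to Proposition~\ref{support} and Theorem~\ref{glp} is a clean way to make that precise). Your handling of the order $\rho\star\varphi$ via the antipode is a nice addition the paper does not spell out.

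The gap is in (iii). You correctly reduce the problem to exhibiting truly quantum $\varphi_1,\varphi_2,\varphi_3$ with $\varphi_1\star\varphi_2$ truly quantum and $\varphi_3^{\star 2}$ random, and your convex-interpolation argument for the mixed case is valid once those exist. But you do not actually produce them, and your proposed route---verify in a classical $H\subsetneq G$ and then ``transport the construction into $S_N^+$ via states on the commutator ideal''---is not a workable plan: states on $C(G)$ do not transport to states supported on $N_{\text{ab}}^{**}$, and the preamble you invoke is stating \emph{hypotheses} for the general subgroup case, not supplying the witnesses for $S_N\subset S_N^+$.

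The paper's examples are concrete and short: the Haar state $h$ is truly quantum (Corollary~\ref{Haartq}), and $h\star h=h$ gives the truly quantum outcome; the state $E^{11}\circ\pi_{G_0}$ coming from the Kac--Paljutkin embedding $G_0\subset S_N^+$ is truly quantum with $(E^{11}\circ\pi_{G_0})^{\star 2}$ random, giving the random outcome. For the mixed outcome the paper interpolates more sharply than your $[1/4,3/4]$ estimate: setting $\varphi=\sqrt{1-c}\,(E^{11}\circ\pi_{G_0})+(1-\sqrt{1-c})\,h$ one computes $\varphi^{\star 2}=(1-c)\varphi_\nu + c\,h$, so $\omega_{\varphi^{\star 2}}(p_Q)=c$ for any $c\in[0,1]$.
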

\begin{proof}
  \begin{enumerate}
    \item[(i)] This is straightforward.
    \item[(ii)] Let $\varphi$ be truly quantum, and $\varphi_\nu$ random with extension $\omega_\nu$. Let $(p_Q^\lambda)\subset \mathcal{O}(S_N^+)$ converge $\sigma$-weakly to $p_Q$. Recalling the group-like projection $p_C=\sum_{\sigma\in S_N}\operatorname{ev}_\sigma$,  using Lemma \ref{qlemma}, mimic the proof of Theorem \ref{pqprops}, hitting both sides of
        $$\Delta^{**}(p_Q)(p_Q\otimes p_C)=p_Q\otimes p_C,$$
        with $\omega_\varphi\otimes \omega_\nu$, to yield:
        $$\omega_{\varphi\star \varphi_\nu}(p_Q)=1,$$
        i.e. $\varphi\star \varphi_\nu$ is truly quantum.
    \item[(iii)] It will be seen in Corollary \ref{Haartq} that the Haar state is truly quantum. Note that for any $N\geq 4$, the Kac--Paljutkin quantum group can be embedded $G_0\leq S_N^+$ via $\pi_{G_0}$. It can be shown that   $E^{11}\circ \pi_{G_0}$ is truly quantum, and  $(E^{11}\circ \pi_{G_0})^{\star 2}=\varphi_\nu$ is a random permutation ((4.6), \cite{mcc}). Let $0\leq c\leq 1$ and consider the truly quantum permutation:
$$\varphi:=\sqrt{1-c}\,(E^{11}\circ \pi_{G_0})+(1-\sqrt{1-c})\,h.$$
Then:
$$\varphi^{\star 2}=(1-c)\varphi_\nu+c \,h\implies \omega_{\varphi^{\star 2}}(p_Q)=c.$$
  \end{enumerate}
\end{proof}

\begin{proposition}
A quantum permutation $\varphi\in S_N^+$ can be written as a convex combination of a random permutation and a truly quantum permutation.
\end{proposition}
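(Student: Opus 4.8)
The plan is to split $\varphi$ along the complementary central projections $p_C=p_{S_N}$ and $p_Q=\ind_{S_N^+}-p_C$ in $C(S_N^+)^{**}$, using wave-function collapse to produce the two pieces. Recall that $p_C=\sum_{\sigma\in S_N}p_\sigma$ is a sum of central projections by Proposition \ref{charsupport}(i), hence itself central, and so is $p_Q$. Set $\lambda:=\omega_\varphi(p_C)$, so $\omega_\varphi(p_Q)=1-\lambda$. If $\lambda=1$ then $\varphi$ is already random and if $\lambda=0$ then $\varphi$ is truly quantum, so in these degenerate cases $\varphi$ is trivially a convex combination (pairing it, with coefficient zero, against the Haar state, which is truly quantum by Corollary \ref{Haartq}, or against some $\ev_\sigma$, which is random). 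So I would assume $0<\lambda<1$ and define
$$\varphi_C:=\widetilde{p_C}\varphi,\qquad \varphi_Q:=\widetilde{p_Q}\varphi.$$

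Next I would check these are states with the expected supports. Since $p_C$ is central, $p_C f p_C=fp_C$ for all $f\in C(S_N^+)^{**}$, so $\varphi_C(f)=\omega_\varphi(fp_C)/\lambda$; positivity follows from $\omega_\varphi(f^*fp_C)=\omega_\varphi((fp_C)^*(fp_C))\geq 0$, and $\varphi_C(\ind_{S_N^+})=\omega_\varphi(p_C)/\lambda=1$. The functional $g\mapsto \omega_\varphi(gp_C)/\lambda$ is normal, because multiplication by $p_C$ is $\sigma$-weakly continuous and $\omega_\varphi$ is normal, and it restricts to $\varphi_C$ on $C(S_N^+)$; by uniqueness of the normal extension (the predual of the bidual is $C(S_N^+)^*$, Section \ref{bid}) it therefore equals $\omega_{\varphi_C}$. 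Hence $\omega_{\varphi_C}(p_C)=\omega_\varphi(p_C)/\lambda=1$, so $\varphi_C$ is random. The identical argument with $p_Q$ shows $\varphi_Q$ is a state with $\omega_{\varphi_Q}(p_Q)=1$, i.e. truly quantum.

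Finally I would recombine. For $f\in C(S_N^+)$, writing $f=f(p_C+p_Q)=fp_C+fp_Q$ in the bidual and using $\omega_\varphi|_{C(S_N^+)}=\varphi$,
$$\lambda\varphi_C(f)+(1-\lambda)\varphi_Q(f)=\omega_\varphi(fp_C)+\omega_\varphi(fp_Q)=\omega_\varphi(f)=\varphi(f),$$
so $\varphi=\lambda\varphi_C+(1-\lambda)\varphi_Q$ is the required convex combination of a random and a truly quantum permutation.

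The individual calculations are all routine; the single point demanding care, and the place where centrality of $p_C$ is indispensable, is this recombination. Expanding $f=(p_C+p_Q)f(p_C+p_Q)$ in the bidual produces four terms, and the two cross terms $\omega_\varphi(p_Cfp_Q)+\omega_\varphi(p_Qfp_C)$ must vanish for the pieces to reassemble into $\varphi$; they do so precisely because $p_C$ and $p_Q$ are central, giving $p_Cfp_Q=fp_Cp_Q=0$. For a non-central projection the collapsed parts would not sum back to $\varphi$. Since the centrality of $p_C$ is already established, no genuine obstacle remains, and the remaining verifications rest only on the predual duality recalled in Section \ref{bid}.
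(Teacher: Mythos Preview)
Your proof is correct and follows exactly the paper's approach: split $\varphi$ via wave-function collapse along the complementary central projections $p_C$ and $p_Q$, with centrality ensuring the pieces reassemble. The paper's own proof is a two-line version of what you wrote; your only excess is the forward reference to Corollary~\ref{Haartq} in the degenerate case, which is unnecessary since the existence of a truly quantum permutation (e.g.\ $E^{11}\circ\pi_{G_0}$) is already available from the Kac--Paljutkin embedding used just before this point.
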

\begin{proof}
If $\varphi$ is random, or truly quantum, the result holds. Assume $\varphi$ is mixed. The projections $p_C,\,p_Q\in C(S_N^+)^{**}$ are central, and thus
\begin{align*}
  \varphi =\omega_\varphi(p_C)\,\widetilde{p_C}\varphi+\omega_\varphi(p_Q)\,\widetilde{p_Q}\varphi,
\end{align*}
with $\widetilde{p_C}\varphi$ random, and $\widetilde{p_Q}\varphi$  truly quantum.
\end{proof}
\begin{corollary}\label{makeran}
  If the convolution of two quantum permutations is a random permutation, then either both are random, or both are truly quantum.
\end{corollary}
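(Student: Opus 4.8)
The plan is to reduce the statement to the additive behaviour of the single quantity $\omega_\varphi(p_Q)$ under the canonical splitting of a quantum permutation into a random and a truly quantum part. Since $p_C$ is a sum of the central projections $p_\sigma$ (Proposition \ref{charsupport}(i)), both $p_C$ and $p_Q=\mathds{1}_{S_N^+}-p_C$ are central in $C(S_N^+)^{**}$, so for any quantum permutation $\varphi$ the cross terms $\omega_\varphi(p_Cfp_Q)$ and $\omega_\varphi(p_Qfp_C)$ vanish and $\varphi = a\,\psi + (1-a)\,\chi$ as states on $C(S_N^+)$, where $a:=\omega_\varphi(p_C)$, the state $\psi(f)=\omega_\varphi(p_Cfp_C)/a$ is random, and $\chi(f)=\omega_\varphi(p_Qfp_Q)/(1-a)$ is truly quantum (a term being omitted when its coefficient is $0$). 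First I would record that $\varphi\mapsto\omega_\varphi(p_Q)$ is affine on the state space, since state extension and evaluation at $p_Q$ are affine.

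Next I would expand using bilinearity of $\star$. Writing $\varphi_i=a_i\psi_i+(1-a_i)\chi_i$ for $i=1,2$,
\begin{equation*}
\varphi_1\star\varphi_2 = a_1a_2\,(\psi_1\star\psi_2) + a_1(1-a_2)\,(\psi_1\star\chi_2) + (1-a_1)a_2\,(\chi_1\star\psi_2) + (1-a_1)(1-a_2)\,(\chi_1\star\chi_2).
\end{equation*}
Applying the affine map $\varphi\mapsto\omega_\varphi(p_Q)$ and evaluating the four terms via Proposition \ref{rules} gives $\omega_{\psi_1\star\psi_2}(p_Q)=0$ by (i), $\omega_{\chi_1\star\psi_2}(p_Q)=1$ by (ii), and $\omega_{\psi_1\star\chi_2}(p_Q)=1$ by the left-handed form of (ii), while $\omega_{\chi_1\star\chi_2}(p_Q)$ is merely nonnegative, which is all that is required. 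Hence
\begin{equation*}
\omega_{\varphi_1\star\varphi_2}(p_Q) = a_1(1-a_2) + (1-a_1)a_2 + (1-a_1)(1-a_2)\,\omega_{\chi_1\star\chi_2}(p_Q).
\end{equation*}

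Finally I would impose that $\varphi_1\star\varphi_2$ is random, so the left-hand side is $0$. As all three summands are nonnegative each vanishes; in particular $a_1(1-a_2)=0$ and $(1-a_1)a_2=0$, these being products of nonnegative reals. The first forces $a_1=0$ or $a_2=1$, the second forces $a_1=1$ or $a_2=0$, and the only jointly consistent possibilities are $a_1=a_2=0$ (both factors truly quantum) and $a_1=a_2=1$ (both factors random). This is exactly the corollary.

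The step needing the most care is the left-handed version of Proposition \ref{rules}(ii), that random $\star$ truly quantum is again truly quantum, since the stated proof only treats the order truly quantum $\star$ random (through $\Delta^{**}(p_Q)(\mathds{1}_{S_N^+}\otimes p_C)=p_Q\otimes p_C$). I would obtain it from reverses: from $S(u_{ij})=u_{ji}$ and Proposition \ref{charsupport}(ii) one checks $S^{**}(p_\sigma)=p_{\sigma^{-1}}$, whence $S^{**}(p_C)=p_C$ and $S^{**}(p_Q)=p_Q$, so $\omega_{\varphi\circ S}(p_Q)=\omega_\varphi(p_Q)$ and reverses preserve the random/truly quantum dichotomy. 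Proposition \ref{reverse} then yields $\omega_{\psi\star\chi}(p_Q)=\omega_{(\chi\circ S)\star(\psi\circ S)}(p_Q)=1$, since $\chi\circ S$ is truly quantum and $\psi\circ S$ is random. (Alternatively one proves $\Delta^{**}(p_Q)(p_C\otimes\mathds{1}_{S_N^+})=p_C\otimes p_Q$ directly and mimics the proof of Proposition \ref{rules}(ii).)
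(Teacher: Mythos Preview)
Your argument is correct and is essentially the approach the paper has in mind: the corollary is stated without proof, but the decomposition you use is exactly what the paper later makes explicit in the proof of Proposition \ref{alphabeta}, where $\varphi\star\rho$ is expanded into the four terms and Proposition \ref{rules} is applied. Your extra care about the left-handed form of Proposition \ref{rules}(ii) is well placed, since the paper's proof of (ii) only treats the order (truly quantum)$\,\star\,$(random) yet silently uses both orders in Proposition \ref{alphabeta}; your reversal argument via $S$ (or the alternative via a left-sided analogue of Lemma \ref{qlemma}) fills a gap the paper leaves implicit.
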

\begin{definition}
Let $\varphi\in S_N^+$ be a quantum permutation. Define $\varphi_C:=\widetilde{p_C}\varphi$, the (classically) random part of $\varphi$, and $\varphi_Q:=\widetilde{p_Q}\varphi$, the truly quantum part of $\varphi$. A quantum permutation $\varphi\in S_N^+$ is called $\alpha$-quantum if $\omega_\varphi(p_Q)=\alpha$.
\end{definition}

\begin{proposition}
If $\varphi\in S_N^+$ is a mixed quantum permutation with $0<\omega_\varphi(p_Q)<1$, then no finite convolution power $\varphi^{\star k}$ is random, or truly quantum.
\end{proposition}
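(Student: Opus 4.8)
The plan is to track the single scalar $\alpha_k:=\omega_{\varphi^{\star k}}(p_Q)$ and prove by induction on $k$ that $\alpha_k\in(0,1)$ for every $k\geq 1$. Since $\varphi^{\star k}$ is random precisely when $\alpha_k=0$ and truly quantum precisely when $\alpha_k=1$, this is exactly the assertion. The base case is the hypothesis itself: $\varphi$ mixed means $\alpha_1=\omega_\varphi(p_Q)=\alpha\in(0,1)$. Throughout I would use the identity $\omega_{\varphi_1\star\varphi_2}(p)=(\omega_{\varphi_1}\otimes\omega_{\varphi_2})(\Delta^{**}(p))$ for projections $p\in C(S_N^+)^{**}$, justified exactly as in Proposition \ref{support}: approximate $p$ $\sigma$-weakly by a net $(p^\lambda)\subset\mathcal{O}(S_N^+)$ and invoke $\sigma$-weak continuity of $\Delta^{**}$ and of the normal product state $\omega_{\varphi_1}\otimes\omega_{\varphi_2}$.

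The heart of the argument is two order inequalities in $C(S_N^+)^{**}\otimes C(S_N^+)^{**}$. Since $p_C=p_{S_N}$ is a group-like projection (Theorem \ref{glp}), $\Delta^{**}(p_C)(\mathds{1}_{S_N^+}\otimes p_C)=p_C\otimes p_C$; multiplying on the right by the central projection $p_C\otimes\mathds{1}_{S_N^+}$ gives $(p_C\otimes p_C)\,\Delta^{**}(p_C)=p_C\otimes p_C$, and since $p_C\otimes p_C$ is central it commutes with $\Delta^{**}(p_C)$, so for these two projections this says $p_C\otimes p_C\leq\Delta^{**}(p_C)$. Likewise, Lemma \ref{qlemma} with $p=p_C$ and $q=p_Q$ gives $\Delta^{**}(p_Q)(\mathds{1}_{S_N^+}\otimes p_C)=p_Q\otimes p_C$, and multiplying by $p_Q\otimes\mathds{1}_{S_N^+}$ yields $p_Q\otimes p_C\leq\Delta^{**}(p_Q)$ by the same reasoning. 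Both steps use only that $p_C,p_Q$ are central and orthogonal, so the relevant products are projections and the criterion $ef=e\Rightarrow e\leq f$ applies; notably no estimate of $\Delta^{**}(p_Q)$ on the $p_Q\otimes p_Q$ block is required, which is what keeps the argument short.

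Now apply the positive functional $\omega_{\varphi^{\star k}}\otimes\omega_\varphi$ to these inequalities, using $\omega_{\varphi^{\star k}}(p_C)=1-\alpha_k$ and $\omega_\varphi(p_C)=1-\alpha$. From $p_C\otimes p_C\leq\Delta^{**}(p_C)$ I obtain $\omega_{\varphi^{\star(k+1)}}(p_C)\geq(1-\alpha_k)(1-\alpha)$, whence $\alpha_{k+1}=1-\omega_{\varphi^{\star(k+1)}}(p_C)\leq 1-(1-\alpha_k)(1-\alpha)<1$ whenever $\alpha_k,\alpha\in(0,1)$, so $\varphi^{\star(k+1)}$ is not truly quantum. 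From $p_Q\otimes p_C\leq\Delta^{**}(p_Q)$ I obtain $\alpha_{k+1}=\omega_{\varphi^{\star(k+1)}}(p_Q)\geq\alpha_k(1-\alpha)>0$, so $\varphi^{\star(k+1)}$ is not random. Together these give $\alpha_{k+1}\in(0,1)$, closing the induction. Conceptually the two inequalities are the quantitative residue of Proposition \ref{rules}: the surviving $p_C\otimes p_C$-part expresses that the classical$\times$classical contribution keeps the classical mass positive, while the $p_Q\otimes p_C$-part expresses that the quantum$\times$classical contribution keeps the quantum mass positive.

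The only genuine technical care is the point flagged first, namely transferring the convolution identity to the non-continuous bidual projections $p_C,p_Q$ and handling the quadrant products through the centrality of $p_C,p_Q$; but this is routine given Section \ref{bid} and the proof of Proposition \ref{support}. Everything else is positivity and the elementary bound $1-(1-\alpha_k)(1-\alpha)<1$, so once the two order inequalities are in hand the conclusion is immediate.
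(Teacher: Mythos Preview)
Your proof is correct. The two operator inequalities $p_C\otimes p_C\leq\Delta^{**}(p_C)$ and $p_Q\otimes p_C\leq\Delta^{**}(p_Q)$ follow from the group-like property of $p_C$ exactly as you say (once one has $\Delta^{**}(p_C)(p_C\otimes p_C)=p_C\otimes p_C$, taking adjoints gives the commuted identity, so centrality is not even strictly needed there), and the identity $\omega_{\varphi_1\star\varphi_2}(p)=(\omega_{\varphi_1}\otimes\omega_{\varphi_2})(\Delta^{**}(p))$ is justified by the net argument of Proposition~\ref{support}. The induction then runs cleanly.

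The paper argues slightly differently. For ``not truly quantum'' it decomposes $\varphi=(1-\alpha)\varphi_C+\alpha\varphi_Q$ and observes in one shot that $\varphi^{\star k}\geq(1-\alpha)^k\varphi_C^{\star k}$, with $\varphi_C^{\star k}$ random by Proposition~\ref{rules}(i); this gives directly $\omega_{\varphi^{\star k}}(p_Q)\leq 1-(1-\alpha)^k$, the same bound your recursion produces. For ``not random'' the paper does not quantify at all: it simply invokes Corollary~\ref{makeran}, noting that $\varphi^{\star k}=\varphi\star\varphi^{\star(k-1)}$ cannot be random because the first factor $\varphi$ is neither random nor truly quantum. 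Your route instead extracts an explicit lower bound $\alpha_{k+1}\geq\alpha_k(1-\alpha)$, which is a weakened one-sided version of Proposition~\ref{alphabeta}. So the paper leans on the earlier qualitative results as black boxes and is shorter, while your argument is more self-contained and yields explicit two-sided bounds on $\alpha_k$ at every step.
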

\begin{proof}
Let $\alpha:=\omega_\varphi(p_Q)$ and write $\varphi=(1-\alpha)\varphi_C+\alpha\,\varphi_Q$:
$$\varphi^{\star k}>(1-\alpha)^k\varphi_C^{\star k}\implies \omega_{\varphi^{\star k}}(p_Q)\leq  1-(1-\alpha)^k,$$
so no $\varphi^{\star k}$ is truly quantum. In addition, $\varphi^{\star k}=\varphi\star \varphi^{\star(k-1)}$ cannot be random, by Corollary \ref{makeran}, because $\varphi$ is neither random nor truly quantum.

\end{proof}

\begin{proposition}\label{alphabeta}
If $\varphi\in S_N^+$ is $\alpha$-quantum and $\rho\in S_N^+$ is $\beta$-quantum, then
$$\alpha+\beta-2\alpha\beta\leq \omega_{\varphi\star \rho}(p_Q)\leq \alpha+\beta-\alpha\beta.$$
\end{proposition}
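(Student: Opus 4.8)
The plan is to split both permutations into their random and truly quantum parts and exploit the bilinearity of convolution. Writing $\alpha=\omega_\varphi(p_Q)$ and $\beta=\omega_\rho(p_Q)$, centrality of $p_C,p_Q$ gives the convex decompositions $\varphi=(1-\alpha)\varphi_C+\alpha\varphi_Q$ and $\rho=(1-\beta)\rho_C+\beta\rho_Q$ as functionals on $C(S_N^+)$, with $\varphi_C,\rho_C$ random and $\varphi_Q,\rho_Q$ truly quantum. Since $\varphi_1\star\varphi_2=(\varphi_1\otimes\varphi_2)\Delta$ is bilinear and the passage to the canonical extension respects convex combinations, I would expand
\begin{align*}
\omega_{\varphi\star\rho}(p_Q) &= (1-\alpha)(1-\beta)\,\omega_{\varphi_C\star\rho_C}(p_Q)+(1-\alpha)\beta\,\omega_{\varphi_C\star\rho_Q}(p_Q)\\
&\quad +\alpha(1-\beta)\,\omega_{\varphi_Q\star\rho_C}(p_Q)+\alpha\beta\,\omega_{\varphi_Q\star\rho_Q}(p_Q).
\end{align*}

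Next I would read off the four coefficients. By Proposition \ref{rules}(i) the first vanishes, as $\varphi_C\star\rho_C$ is random; by Proposition \ref{rules}(ii) the third equals $1$, since $\varphi_Q\star\rho_C$ convolves a truly quantum permutation on the left with a random one on the right. The fourth term is the $p_Q$-weight $\gamma:=\omega_{\varphi_Q\star\rho_Q}(p_Q)$ of a state at a projection, hence $\gamma\in[0,1]$ (and by Proposition \ref{rules}(iii) every such value is attained, so the bounds will be sharp). Granting also that the second term equals $1$, substitution gives $\omega_{\varphi\star\rho}(p_Q)=(1-\alpha)\beta+\alpha(1-\beta)+\alpha\beta\gamma=\alpha+\beta-2\alpha\beta+\alpha\beta\gamma$, and letting $\gamma$ range over $[0,1]$ yields exactly the two inequalities in the statement.

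The step I expect to be the main obstacle is the second term $\omega_{\varphi_C\star\rho_Q}(p_Q)$, in which the random permutation sits on the left and the truly quantum one on the right --- the reverse order to Proposition \ref{rules}(ii). The most economical route is to reduce it to the third term via the reverse: by Proposition \ref{reverse} one has $(\varphi_C\star\rho_Q)\circ S=(\rho_Q\circ S)\star(\varphi_C\circ S)$, and since $S^{**}(p_\sigma)=p_{\sigma^{-1}}$ with $S_N$ closed under inversion gives $S^{**}(p_C)=p_C$ and $S^{**}(p_Q)=p_Q$, the permutation $\rho_Q\circ S$ stays truly quantum and $\varphi_C\circ S$ stays random; as $\omega_{\psi\circ S}(p_Q)=\omega_\psi(S^{**}(p_Q))=\omega_\psi(p_Q)$, the second term inherits the value $1$ from Proposition \ref{rules}(ii). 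A self-contained alternative is to establish the right-handed group-like identity $(p_C\otimes\mathds{1}_{S_N^+})\Delta^{**}(p_C)=p_C\otimes p_C$ from the left-handed one of Theorem \ref{glp} together with $S^{**}(p_C)=p_C$ and $\Delta^{**}\circ S^{**}=(S^{**}\otimes S^{**})\circ\tau\circ\Delta^{**}$; then, writing $\Delta^{**}(p_Q)=\mathds{1}_{S_N^+}\otimes\mathds{1}_{S_N^+}-\Delta^{**}(p_C)$ and applying $\omega_{\varphi_C}\otimes\omega_{\rho_Q}$, the support of $\varphi_C$ on $p_C$ lets me insert $p_C\otimes\mathds{1}_{S_N^+}$ and conclude $(\omega_{\varphi_C}\otimes\omega_{\rho_Q})\Delta^{**}(p_C)=\omega_{\varphi_C}(p_C)\,\omega_{\rho_Q}(p_C)=0$, so the term is $1$. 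Either way the four coefficients are pinned down and the bounds follow.
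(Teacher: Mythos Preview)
Your approach is exactly the paper's: decompose $\varphi\star\rho$ bilinearly into the four random/truly-quantum pieces and read off each term via Proposition~\ref{rules}. The paper's proof is a two-line ``expand and apply Proposition~\ref{rules}'', so you are doing the same thing with more care.

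The one place you go beyond the paper is worth noting. The proof of Proposition~\ref{rules}(ii) as written only handles $\varphi_Q\star\varphi_C$ (truly quantum on the left), whereas the term $\varphi_C\star\rho_Q$ needs the opposite order; the paper's statement of (ii) is phrased symmetrically and its application to Proposition~\ref{alphabeta} tacitly uses both orders. Your antipode argument is the natural fix: $(\varphi_C\star\rho_Q)\circ S=(\rho_Q\circ S)\star(\varphi_C\circ S)$ by Proposition~\ref{reverse}, and since $S^{**}$ permutes the $p_\sigma$ (it sends $u_{\sigma(j),j}\mapsto u_{j,\sigma(j)}$ and is $\sigma$-weakly continuous on the alternating-projection limit) one has $S^{**}(p_C)=p_C$, hence $S^{**}(p_Q)=p_Q$, so reverses preserve the random/truly-quantum dichotomy and the value $1$ carries over. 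Your alternative via the right-handed group-like identity $(p_C\otimes\mathds{1})\Delta^{**}(p_C)=p_C\otimes p_C$ is equally valid and mirrors Lemma~\ref{qlemma} on the other leg. Either route completes the detail the paper leaves implicit; otherwise your proof coincides with the paper's.
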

\begin{proof}
Note that $\varphi\star \rho$ equals:
$$(1-\alpha)(1-\beta) (\varphi_C\star \rho_C)+\beta (1-\alpha)(\varphi_C\star \rho_Q)+\alpha(1-\beta)(\varphi_Q\star \rho_C)+\alpha\beta(\varphi_Q\star \rho_Q).$$
Now apply Proposition \ref{rules}.
\end{proof}
\begin{definition}
  Where $\overline{(\varphi,\rho)}=(\varphi+\rho)/2$ is the mean of two quantum permutations, a \emph{quantum strictly 1-increasing} pair of quantum permutations $\varphi_1,\varphi_2\in S_N^+$ are a pair such that:
  $$\omega_{\varphi_1\star\varphi_2}(p_Q)>\omega_{\overline{(\varphi_1,\varphi_2)}}(p_Q).$$
  A \emph{quantum strictly 2-increasing} pair of quantum permutations are a pair such that:
  $$\omega_{(\varphi_1\star\varphi_2)^{\star 2}}(p_Q)>\omega_{\varphi_1\star\varphi_2}(p_Q)>\omega_{\overline{(\varphi_1,\varphi_2)}}(p_Q).$$
  Inductively, a  \emph{quantum strictly $(n+1)$-increasing} pair of quantum permutations are a pair such that:
  $$\omega_{(\varphi_1\star\varphi_2)^{\star (2^n)}}(p_Q)>\omega_{(\varphi_1\star\varphi_2)^{\star \left(2^{n-1}\right)}}(p_Q)>\cdots>\omega_{\varphi_1\star\varphi_2}(p_Q)>\omega_{\overline{(\varphi_1,\varphi_2)}}(p_Q).$$

\end{definition}

\begin{proposition}\label{region}
Let $\varphi_1\in S_N^+$ be $\alpha$-quantum, and $\varphi_2\in S_N^+$ be $\beta$-quantum.
\begin{enumerate}
  \item[(i)] If $(\alpha,\beta)\neq (0,0)$, then if $\alpha=1/4$ or $\beta< \alpha/(4\alpha-1)$,  the pair $(\varphi_1,\varphi_2)$ is \emph{quantum strictly 1-increasing}.
  \item[(ii)] If $(\alpha,\beta)\neq (0,0)$, and $\beta=\alpha/(4\alpha-1)$, then:
  $$\omega_{\varphi_1\star \varphi_2}(p_Q)\geq\omega_{\overline{(\varphi_1,\varphi_2)}}(p_Q).$$
  Equality is possible, with e.g. quantum permutations coming from the Kac--Paljutkin quantum group $G_0\leq S_N^+$.
  \item[(iii)] If  $\beta> \alpha/(4\alpha-1)$ then $\omega_{\varphi_1\star \varphi_2}(p_Q)$ can be less than, equal to, or greater than $\omega_{\overline{(\varphi_1,\varphi_2)}}(p_Q)$.
  \item[(iv)] Let
  $$(S_{N}^+\times S_N^+)_{\alpha,\beta}:=\{(\varphi,\rho)\,\colon\,\omega_\varphi(p_Q)=\alpha,\,\omega_\rho(p_Q)=\beta\}.$$
  Then
$$\max\{|\omega_{\varphi_1\star \varphi_2}(p_Q)-\omega_{\varphi_3\star \varphi_4}(p_Q)|\,\colon\,(\varphi_1,\varphi_2), \,(\varphi_3,\varphi_4)\in (S_{N}^+\times S_N^+)_{\alpha,\beta}\}=\alpha\beta.$$

\end{enumerate}
\end{proposition}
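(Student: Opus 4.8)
The plan is to reduce the whole statement to the affine formula already implicit in the proof of Proposition~\ref{alphabeta}. Writing $\varphi_1=(1-\alpha)\varphi_{1,C}+\alpha\,\varphi_{1,Q}$ and $\varphi_2=(1-\beta)\varphi_{2,C}+\beta\,\varphi_{2,Q}$ in terms of their random and truly quantum parts, expanding $\varphi_1\star\varphi_2$ into four terms, and applying $\omega_{\cdot}(p_Q)$ together with Proposition~\ref{rules} (random $\star$ random is random; random $\star$ truly quantum and truly quantum $\star$ random are truly quantum) pins down three of the four contributions, leaving
$$\omega_{\varphi_1\star\varphi_2}(p_Q)=\alpha+\beta-2\alpha\beta+\alpha\beta\,t,\qquad t:=\omega_{\varphi_{1,Q}\star\varphi_{2,Q}}(p_Q)\in[0,1].$$
Thus the value attached to a pair depends only on the single parameter $t$, through an affine map of slope $\alpha\beta$.

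For the upper bound one simply notes that $t\in[0,1]$ for every admissible pair, so all the numbers $\omega_{\varphi_i\star\varphi_j}(p_Q)$ lie in an interval of length $\alpha\beta$ and therefore differ pairwise by at most $\alpha\beta$; this is just Proposition~\ref{alphabeta} read off the formula, giving the maximum is $\le\alpha\beta$.

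For the lower bound I would exhibit two admissible pairs realising $t=1$ and $t=0$. Taking the counit $\varepsilon$ (a character, hence random) for all the classical parts, I set both truly quantum parts equal to the Haar state $h$ to obtain $t=1$, since $h\star h=h$ and $h$ is truly quantum by Corollary~\ref{Haartq}; and I set both truly quantum parts equal to $E^{11}\circ\pi_{G_0}$ from the Kac--Paljutkin embedding $G_0\subset S_N^+$ to obtain $t=0$, since $(E^{11}\circ\pi_{G_0})^{\star 2}$ is random. The assembled states $(1-\alpha)\varepsilon+\alpha\,\varphi_Q$ and $(1-\beta)\varepsilon+\beta\,\varphi_Q$ (with $\varphi_Q$ the chosen truly quantum part) are $\alpha$- and $\beta$-quantum respectively, so both pairs lie in $(S_N^+\times S_N^+)_{\alpha,\beta}$, and their values $\alpha+\beta-\alpha\beta$ and $\alpha+\beta-2\alpha\beta$ differ by exactly $\alpha\beta$.

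The main obstacle is precisely the availability of truly quantum permutations whose convolution attains each endpoint $t\in\{0,1\}$, which is the content of Proposition~\ref{rules}(iii); all the rest is the affine bookkeeping above. Finally the degenerate case $\alpha\beta=0$ deserves a separate word: if $\alpha=0$ or $\beta=0$ the formula makes $\omega_{\varphi_1\star\varphi_2}(p_Q)$ constant, so the maximum difference is $0=\alpha\beta$ (this also covers $N\le 3$, where $p_Q=0$).
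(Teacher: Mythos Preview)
Your argument for part~(iv) is correct and matches the paper's approach: derive the affine formula $\omega_{\varphi_1\star\varphi_2}(p_Q)=\alpha+\beta-2\alpha\beta+\alpha\beta\,t$ with $t\in[0,1]$ (this is Proposition~\ref{alphabeta}), then exhibit pairs realising $t=0$ and $t=1$. The paper uses $h_{S_N}$ as the random component rather than your counit $\varepsilon$, but either choice works since both are random permutations.

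However, you do not actually prove parts (i)--(iii). You announce that ``the whole statement'' reduces to the affine formula, but you never carry out the comparison of the interval $[\alpha+\beta-2\alpha\beta,\,\alpha+\beta-\alpha\beta]$ against the mean $\omega_{\overline{(\varphi_1,\varphi_2)}}(p_Q)=(\alpha+\beta)/2$. For~(i) one must verify that $\alpha+\beta-2\alpha\beta>(\alpha+\beta)/2$, i.e.\ $\alpha+\beta>4\alpha\beta$, and check that this is exactly the stated region. For~(ii) and~(iii) the inequalities alone are not enough: the assertions ``equality is possible'' and ``can be less than, equal to, or greater than'' require examples, not just bounds. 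Your pairs built for~(iv) can be recycled (with $\alpha,\beta$ chosen on or above the boundary curve, and intermediate convex combinations of $h$ and $E^{11}\circ\pi_{G_0}$ to hit any $t\in[0,1]$ as in Proposition~\ref{rules}(iii)), but this needs to be written out. The paper's own proof of (i)--(iii) is also terse (``apply Proposition~\ref{alphabeta}''), so the missing work is elementary, but as written your proposal treats only one of the four parts.
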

\begin{proof}
For (i)-(iii) apply Proposition \ref{alphabeta}. For (iv), the maximum in Proposition \ref{alphabeta} is attained for
\begin{align*}
 \varphi_1&=(1-\alpha)\,h_{S_N}+\alpha\,h \\
  \varphi_2 & =(1-\beta)\,h_{S_N}+\beta\,h \\
 \varphi_3 & =(1-\alpha)\,h_{S_N}+\alpha\,(E^{11}\circ\pi_{G_0}) \\
  \varphi_4 & =(1-\beta)\,h_{S_N}+\beta\,(E^{11}\circ\pi_{G_0})
\end{align*}
\end{proof}
Suppose that $\varphi_1$ is $\alpha$-quantum, and $\varphi_2$ is $\beta$-quantum. The subset of $S_N^+\times S_N^+$ given by  condition (i) is called the $Q_I$-region. In this region the dynamics of the convolution $(\varphi_1,\varphi_2)\to \varphi$ with respect to $p_Q$ cannot be too wild:
$$\omega_{\varphi_1\star \varphi_2}(p_Q)\in\left(\omega_{\overline{(\varphi_1,\varphi_2)}}(p_Q),\omega_{\overline{(\varphi_1,\varphi_2)}}(p_Q)+\alpha\beta\right].$$
Note that the width of this interval tends to zero for $\alpha\beta\to 0$.

\bigskip

On the other hand, the region of $S_N^+\times S_N^+$ given by (iii) is called the $Q_W$-region, and the dynamics can be more wild here. Given an arbitrary pair of quantum permutations in this region, the convolution can be more, equal, or less quantum than the mean, and, as $\alpha\beta\to 1$, over the collection of  $(\varphi,\rho)\in Q_W$  the possible range of values of $\omega_{\varphi\star \rho}(p_Q)$ tends to one. Tracing from $Q_I$ towards $Q_W$, on the boundary $\partial_W$ (given by (ii)) `conservation of quantumness',
$$\omega_{\varphi_1\star \varphi_2}(p_Q)=\omega_{\overline{(\varphi_1,\varphi_2)}}(p_Q),$$
becomes possible for the first time.

\bigskip

Similarly, higher order regions can be defined:
\begin{enumerate}
  \item The region $Q_{2I}\subseteq Q_{I}$ given by $\beta<(2\alpha-1)/(2\alpha-2)$ consists of quantum strictly 2-increasing pairs;
  \item The region $Q_{3I}\subseteq Q_{2I}$ given by $\beta<1-\sqrt{2}/(1-2\alpha)$ consists of quantum strictly 3-increasing pairs;
  \item The region $Q_{\frac12 W}\subseteq Q_W$ given by $\beta>(1-1/\sqrt{2})/\alpha$ consists of pairs of quantum permutations $(\varphi_1,\varphi_2)$ such that the pair $(\varphi_1\star\varphi_2,\varphi_1\star \varphi_2)\not\in  Q_{2I}$, etc.
\end{enumerate}

\begin{figure}[ht]\begin{center} \includegraphics[width=0.55\textwidth]{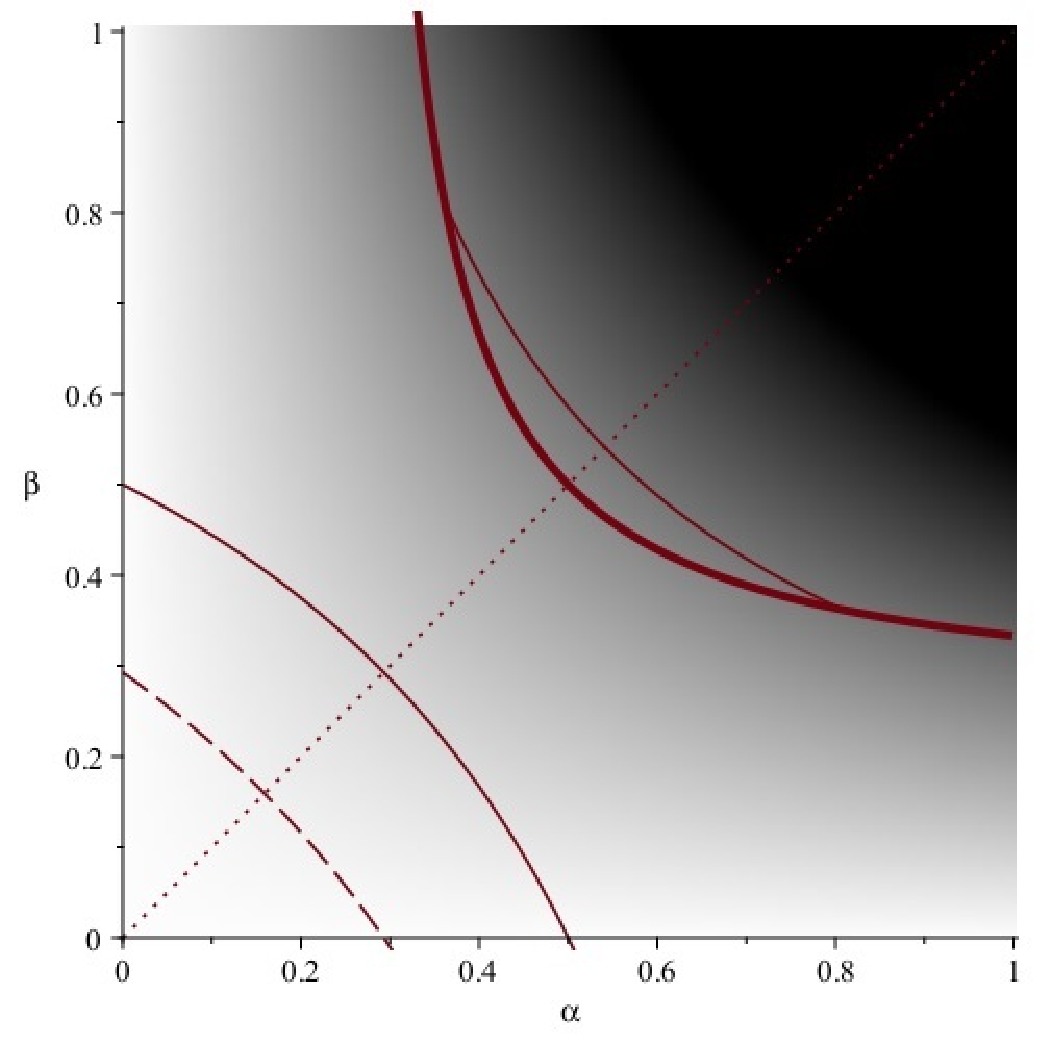}\caption{The phase diagram for the convolution of $\alpha$-quantum and $\beta$-quantum permutations. The phases are quantum increasing, $Q_I$, in the bottom left, and quantum wild, $Q_W$, in the top right, with the bold line $\partial_W$ the boundary. From the bottom left,  $Q_{3I}\subset Q_{2I}\subset Q_{I}$, and then touching $\partial_W$ on the diagonal, $Q_{\frac12W}\subset Q_{W}$. The region $Q_{\frac12 W}$ is such that the convolution of states from this region cannot be too close to random: indeed the convolution cannot fall inside $Q_{2I}$. The line $\alpha=\beta$ represents $(\varphi,\varphi)\to\varphi^{\star 2}$. The shading is proportional to $\alpha\beta$ (see Proposition \ref{region} (iv)).\label{phase}}\end{center}\end{figure}

\subsection{The truly quantum part of an idempotent state}
\begin{corollary}\label{idempot2}
If $\phi\in S_N^+$ is an idempotent state, then
$$\omega_\phi(p_Q)\in \{0\}\cup[1/2,1].$$
\end{corollary}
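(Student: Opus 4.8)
The plan is to apply Proposition \ref{alphabeta} to the pair $(\phi,\phi)$ and then exploit idempotency. Set $\alpha:=\omega_\phi(p_Q)$, so that $\phi$ is $\alpha$-quantum. Taking both $\varphi$ and $\rho$ in Proposition \ref{alphabeta} to be $\phi$ (so that $\beta=\alpha$) gives the two-sided estimate
$$2\alpha-2\alpha^2\leq \omega_{\phi\star\phi}(p_Q)\leq 2\alpha-\alpha^2.$$
The idempotency hypothesis $\phi\star\phi=\phi$ then collapses the middle term, since $\omega_{\phi\star\phi}(p_Q)=\omega_\phi(p_Q)=\alpha$.

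The decisive ingredient is the resulting lower bound $2\alpha-2\alpha^2\leq\alpha$, which I would rearrange to $\alpha(1-2\alpha)\leq 0$. Because $\alpha=\omega_\phi(p_Q)\in[0,1]$ is non-negative (it is the value of a state on a projection), the factor $\alpha$ is $\geq 0$, and so the inequality forces either $\alpha=0$ or $1-2\alpha\leq 0$, i.e. $\alpha\geq 1/2$. Together with the trivial constraint $\alpha\leq 1$, this yields exactly $\alpha\in\{0\}\cup[1/2,1]$, which is the claim. The upper bound $\alpha\leq 2\alpha-\alpha^2$ merely reproduces $0\leq\alpha\leq 1$ and contributes nothing extra, so it can be discarded.

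There is essentially no analytic obstacle here once Proposition \ref{alphabeta} is in hand: the whole content is the quadratic inequality produced by feeding a fixed point of convolution into the general quantum-part estimate. The one point deserving a sentence of care is confirming that $\omega_{\phi\star\phi}(p_Q)=\omega_\phi(p_Q)$ really does follow from $\phi\star\phi=\phi$ at the level of the biduals, i.e. that the extension $\omega_{(\cdot)}$ respects the equality of states on $C(S_N^+)$; this is immediate since two states agreeing on $C(S_N^+)$ have the same canonical normal extension to $C(S_N^+)^{**}$. Thus the proof is a short deduction, and I would present it in full rather than merely sketching it.
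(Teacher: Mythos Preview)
Your proof is correct and is essentially the paper's argument: the paper invokes Proposition~\ref{region}, whose proof in turn just says ``apply Proposition~\ref{alphabeta}'', so you have simply short-circuited that detour and written out the quadratic inequality explicitly. The only cosmetic difference is that the paper phrases the obstruction for $0<\alpha<1/2$ as the pair $(\phi,\phi)$ being quantum strictly $1$-increasing, whereas you extract the same conclusion directly from the lower bound $2\alpha-2\alpha^2\leq\alpha$.
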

\begin{proof}
If $\phi$ is an idempotent state,
$$\omega_\phi(p_Q)=\omega_{\phi\star\phi}(p_Q).$$
The rest follows from Proposition \ref{region}.
\end{proof}
An idempotent on the boundary $\partial_{W}$ is the Haar idempotent $h_{G_0}$ associated with the Kac--Paljutkin quantum group $G_0\leq S_4^+$ which satisfies $\omega_{h_{G_0}}(p_Q)=1/2$.

\begin{example}\label{limit}
Let $\mathbb{G}$ be a finite quantum group given by $\pi:C(S_N^+)\to C(\mathbb{G})$. Where $G\leq \mathbb{G}$ is the classical version, the $\sigma$-weak extension $\pi^{**}$ to the biduals maps onto $C(\mathbb{G})$, and in particular $\pi^{**}(p_\sigma)\in C(\mathbb{G})$ is the support projection of
$$f\mapsto \pi_{\text{ab}}(\pi(f))(\sigma)\qquad(f\in C(S_N^+)).$$
Let $h_{\mathbb{G}}:=h_{C(\mathbb{G})}\circ \pi$ with extension to the bidual $\omega_{\mathbb{G}}$. From e.g. \cite{kpa}:
$$\omega_{\mathbb{G}}(p_\sigma)=\frac{1}{\dim C(\mathbb{G})}\qquad (\sigma\in G).$$
This implies that
\begin{equation}\omega_{\mathbb{G}}(p_Q)= 1-\frac{|G|}{\dim C(\mathbb{G})}.\label{finitech}\end{equation} Let $N\geq 9$, where $S_N$ is generated by elements $\sigma,\tau$ of order two and three \cite{mil}, and thus there is an embedding $\widehat{S_N}\leq S_5^+$ given by magic unitaries $u^\sigma\in M_2(C(\widehat{S_N}))$ and $u^\tau\in M_3(C(\widehat{S_N}))$ (Chapter 13, \cite{ba1}):
$$u=\begin{bmatrix}
      u^{\sigma} & 0 \\
      0 & u^{\tau}
    \end{bmatrix}.$$
A finite dual $\widehat{\Gamma}\leq S_N^+$ has classical version with order equal to the number of one dimensional representations of $\Gamma$ (see \cite{mcc} for more). Therefore the classical version of $\widehat{S_N}$ is $\mathbb{Z}_2$ and so, for $N\geq 9$, the associated Haar idempotent:
\begin{equation}\omega_{\widehat{S_N}}(p_Q)=1-\frac{2}{N!},\label{limpt}\end{equation}
which tends to one for $N\to \infty$.
\end{example}

   This suggests the following study:  consider
$$\chi_N:=\{\omega_\phi(p_Q)\,\colon\,\phi\in S_N^+,\,\phi\star\phi=\phi\}.$$
It is the case that $\chi_N=\{0\}$ for $N\leq 3$, and otherwise a non-singleton. By (\ref{limpt}), $1$ is a limit point for $\chi_5\cap[1/2,1)$. Is there any other interesting behaviour: either at fixed $N$, or asymptotically $N\to\infty$?

\bigskip

It seems unlikely that there exists a exotic finite quantum permutation group, but something can be said:
\begin{proposition}
An exotic finite quantum permutation group at order $N$ satisfies:
$$\dim C(\mathbb{G})\geq 2N!$$
In particular, there is no exotic finite quantum group with $\dim C(\mathbb{G})<1440$.
\end{proposition}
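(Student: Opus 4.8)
The plan is to compute the value $\omega_{\mathbb{G}}(p_Q)$ attached to the Haar idempotent $h_{\mathbb{G}}:=h_{C(\mathbb{G})}\circ\pi$ of an exotic finite quantum permutation group $S_N\subsetneq\mathbb{G}\subsetneq S_N^+$, presented by $\pi\colon C(S_N^+)\to C(\mathbb{G})$, and then to feed this value into the quantisation dichotomy of Corollary \ref{idempot2}.

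First I would pin down the classical version $G$ of $\mathbb{G}$. Since $S_N\subseteq\mathbb{G}$, each character $\ev_\sigma$ with $\sigma\in S_N$ factors through $\pi$ and so survives as a character on $C(\mathbb{G})$; conversely, since $\mathbb{G}\subseteq S_N^+$, every character on $C(\mathbb{G})$ pulls back along $\pi$ to a character on $C(S_N^+)$, and these are exactly the $\ev_\sigma$ with $\sigma\in S_N$, the classical version of $S_N^+$ being $S_N$. Hence the classical version of $\mathbb{G}$ is precisely $S_N$, so $|G|=N!$, and formula (\ref{finitech}) of Example \ref{limit} specialises to
$$\omega_{\mathbb{G}}(p_Q)=1-\frac{N!}{\dim C(\mathbb{G})}.$$

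Next I would use that $\mathbb{G}$ is genuinely exotic. Because $C(S_N)=C(\mathbb{G})/N_{\text{ab}}$ is a \emph{proper} quotient of $C(\mathbb{G})$ (were it not, $\mathbb{G}=S_N$), one has $\dim C(\mathbb{G})>N!$ and therefore $\omega_{\mathbb{G}}(p_Q)>0$. As $h_{\mathbb{G}}$ is an idempotent state on $C(S_N^+)$, Corollary \ref{idempot2} forbids the open interval $(0,1/2)$, forcing $\omega_{\mathbb{G}}(p_Q)\geq 1/2$, that is
$$1-\frac{N!}{\dim C(\mathbb{G})}\geq\frac12\implies\dim C(\mathbb{G})\geq 2N!,$$
which is the first assertion. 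For the second, I would recall from the start of Section \ref{exotic} that no exotic intermediate quantum subgroup exists for $N\leq 5$ (using $S_N=S_N^+$ for $N\leq 3$ and \cite{bb3,ban} for $N=4,5$), so an exotic $\mathbb{G}$ forces $N\geq 6$; combined with $\dim C(\mathbb{G})\geq 2N!$ this yields $\dim C(\mathbb{G})\geq 2\cdot 6!=1440$.

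The argument is essentially an assembly of the cited ingredients, so I expect no serious obstacle. The two points that need care are the identification of the classical version of $\mathbb{G}$ as $S_N$ (so that the exotic hypothesis contributes the full $|G|=N!$ rather than a smaller group, and thereby feeds the correct quantity into (\ref{finitech})), and the verification that the dichotomy $\omega_\phi(p_Q)\in\{0\}\cup[1/2,1]$ of Corollary \ref{idempot2} applies verbatim to $h_{\mathbb{G}}$, which it does since $h_{\mathbb{G}}$ is indeed an idempotent state on $C(S_N^+)$.
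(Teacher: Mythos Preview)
Your proposal is correct and follows essentially the same route as the paper: combine formula (\ref{finitech}) with Corollary \ref{idempot2} and the constraint $N\geq 6$. The paper's proof is a one-line invocation of these ingredients, whereas you have usefully made explicit both the identification of the classical version of $\mathbb{G}$ as $S_N$ (so that $|G|=N!$) and the strict positivity $\omega_{\mathbb{G}}(p_Q)>0$ needed to land in the $[1/2,1]$ branch of the dichotomy.
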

\begin{proof}
This follows from (\ref{finitech}) and Corollary \ref{idempot2}, and the fact that any exotic quantum permutation group $S_N\lneq\mathbb{G}\lneq S_N^+$ must satisfy $N\geq 6$.
\end{proof}
\subsection{Periodicity}
A periodicity in convolution powers of random permutations is possible. For example, suppose that $G\leq S_N$ and $N\lhd G$ is a normal subgroup. Consider the probability  $\nu$ uniform on the coset $Ng$. Then, where $\varphi_\nu\in S_N^+$ is the associated state:
$$\varphi_\nu(f)=\sum_{\sigma \in S_N}\pi_{\text{ab}}(f)(\sigma)\nu(\{\sigma\})=\frac{1}{|Ng|}\sum_{\tau\in N}\pi_{\text{ab}}(f)(\tau g)\qquad (f\in C(S_N^+)),$$
the convolution powers $(\varphi_\nu^{\star k})_{k\geq 0}$ are periodic, with period equal to the order of $g$.

\bigskip

There can also be periodicity with respect to $p_Q$. For example, $\varphi:=E^{11}\circ \pi_{G_0}$ is such that
$$\omega_{\varphi^{\star k}}(p_Q)=\begin{cases}
                         0, & \mbox{if } k\text{ even}, \\
                         1, & \mbox{if } k\text{ odd}.
                       \end{cases}$$

\begin{proposition}
Suppose that $\varphi\in S_N^+$ is truly quantum. If $\varphi^{\star k}$ is  random, then $\varphi^{\star (k+1)}$ is truly quantum.
\end{proposition}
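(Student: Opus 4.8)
The plan is to observe that this is an immediate consequence of Proposition \ref{rules}(ii) once the convolution power is factored appropriately. First I would write
$$\varphi^{\star(k+1)}=\varphi\star \varphi^{\star k},$$
which is legitimate by associativity of the convolution on the state space. By hypothesis $\varphi$ is truly quantum, so $\omega_\varphi(p_Q)=1$, and $\varphi^{\star k}$ is assumed random, so $\omega_{\varphi^{\star k}}(p_Q)=0$. The factorisation above therefore exhibits $\varphi^{\star(k+1)}$ as the convolution of a truly quantum permutation with a random permutation, in precisely the order treated in the proof of Proposition \ref{rules}(ii) (truly quantum on the left, random on the right).

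Applying Proposition \ref{rules}(ii) directly then yields $\omega_{\varphi^{\star(k+1)}}(p_Q)=1$, i.e. $\varphi^{\star(k+1)}$ is truly quantum, which is the claim. In effect the only content is recognising the single-step factorisation; the genuine analytic work — hitting both sides of $\Delta^{**}(p_Q)(\ind_{S_N^+}\otimes p_C)=p_Q\otimes p_C$ with $\omega_\varphi\otimes\omega_{\varphi^{\star k}}$ and passing to the $\sigma$-weak limit along an approximating net $(p_Q^\lambda)\subset\mathcal{O}(S_N^+)$ — has already been carried out in establishing Proposition \ref{rules}(ii), so there is no obstacle to reproduce here. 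Should one prefer a self-contained argument, I would simply repeat that same limiting computation with $\varphi_\nu$ replaced by $\varphi^{\star k}$, but invoking the earlier proposition is cleaner and avoids duplication.
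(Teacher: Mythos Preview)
Your proof is correct and follows essentially the same route as the paper: factor $\varphi^{\star(k+1)}=\varphi\star\varphi^{\star k}$ and invoke the convolution rules established in Proposition~\ref{rules}. The paper's one-line proof cites Corollary~\ref{makeran}, but your direct appeal to Proposition~\ref{rules}(ii) is arguably the sharper reference, since Corollary~\ref{makeran} on its own only yields that $\varphi^{\star(k+1)}$ is \emph{not random}, whereas Proposition~\ref{rules}(ii) gives ``truly quantum'' outright.
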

\begin{proof}
Follows from Corollary \ref{makeran}.
\end{proof}
\begin{corollary}
Suppose that a truly quantum permutation $\varphi$ has a random finite convolution power. Let $k_0$ be the smallest such power. Then:
$$\omega_{\varphi^{k}}(p_Q)=\begin{cases}
                              0, & \mbox{if } k\mod k_0=0, \\
                              1, & \mbox{otherwise}.
                            \end{cases}$$
\end{corollary}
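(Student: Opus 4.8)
The plan is to extract everything from the two structural results already in hand: Corollary \ref{makeran} (a convolution that comes out random forces the two factors to be both random or both truly quantum) and Proposition \ref{rules} (random $\star$ random is random, and truly quantum $\star$ random is truly quantum). First I would record that necessarily $k_0 \geq 2$, since $k_0 = 1$ would make $\varphi = \varphi^{\star 1}$ random, contradicting that $\varphi$ is truly quantum.

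The key first step is to pin down the behaviour on the ``base interval'' $1 \le j \le k_0 - 1$. For such a $j$, write $\varphi^{\star k_0} = \varphi^{\star j} \star \varphi^{\star (k_0 - j)}$. Since $\varphi^{\star k_0}$ is random, Corollary \ref{makeran} says the two factors are either both random or both truly quantum. Minimality of $k_0$ rules out both being random (as $1 \le j \le k_0-1$ and $1 \le k_0 - j \le k_0 - 1$), so both are truly quantum. This is the crucial use of Corollary \ref{makeran}: it is precisely what excludes the \emph{mixed} case, guaranteeing that $\varphi^{\star j}$ is genuinely truly quantum for every $1 \le j \le k_0 - 1$, and not merely non-random.

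Next I would show that $\varphi^{\star m k_0}$ is random for all $m \ge 1$, by writing it as $(\varphi^{\star k_0})^{\star m}$ and iterating Proposition \ref{rules}(i). Finally, for an arbitrary $k$ with $k_0 \nmid k$, write $k = m k_0 + r$ with $m \ge 1$ and $1 \le r \le k_0 - 1$ and, using associativity, group the terms as $\varphi^{\star k} = \varphi^{\star r} \star \varphi^{\star m k_0}$, a convolution of a truly quantum permutation (from the base interval) with a random permutation. Proposition \ref{rules}(ii) then yields that $\varphi^{\star k}$ is truly quantum, that is $\omega_{\varphi^{\star k}}(p_Q) = 1$; while $\omega_{\varphi^{\star k}}(p_Q) = 0$ whenever $k_0 \mid k$. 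This gives exactly the claimed dichotomy.

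The one point demanding care --- and the likely main obstacle --- is the order of convolution: Proposition \ref{rules}(ii) is stated directionally, as (truly quantum) $\star$ (random), so in the last step I must group $k = r + m k_0$ with the truly quantum factor $\varphi^{\star r}$ on the \emph{left}, rather than grouping $k = m k_0 + r$ the other way. With this ordering the argument closes using only Proposition \ref{rules}(i),(ii) as stated, and in particular needs no reversed version of (ii).
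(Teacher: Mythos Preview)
Your argument is correct and is essentially the natural way to unpack the corollary; the paper itself gives no proof, simply listing it after the one-line Proposition that a truly quantum $\varphi$ with $\varphi^{\star k}$ random has $\varphi^{\star(k+1)}$ truly quantum. Your direct appeal to Corollary~\ref{makeran} on the base interval is exactly what is needed to upgrade ``not random'' (which is all minimality of $k_0$ gives a priori) to ``truly quantum'', and the rest follows from Proposition~\ref{rules}(i),(ii) as you describe.

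One small expositional slip: in the final step you say ``for an arbitrary $k$ with $k_0\nmid k$, write $k=mk_0+r$ with $m\ge 1$'', but for $1\le k\le k_0-1$ the quotient is $m=0$. Those cases are of course already covered by your base-interval step, so the argument is complete; just adjust the wording to ``for $k>k_0$ with $k_0\nmid k$'' or allow $m\ge 0$ and note that $m=0$ was handled. Your care about the order of factors in applying Proposition~\ref{rules}(ii) is well placed, since the paper's proof of (ii) only treats the case (truly quantum)$\,\star\,$(random) explicitly.
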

Is there a quantum permutation with $k_0>2$? This phenomenon suggests looking at when the classical version of $\mathbb{G}$ is a normal quantum subgroup $G\lhd \mathbb{G}$. However, in general, the classical periodicity associated with probability measures constant on cosets of $N\lhd G$ for $G\leq S_N$ does not extend to the quantum case. See \cite{mc1}, Section 4.3.1.

\section{Integer fixed points quantum permutations}
For a quantum permutation group $\mathbb{G}$,  consider the `number of fixed points':
$$\operatorname{fix}:=\sum_{j=1}^N u_{jj}.$$
This is the trace of the fundamental representation, the so-called `main character'. In the classical case, with fundamental representation $(\mathds{1}_{j\to i})_{i,j=1}^N$ of $C(S_N)$, the  main character counts the number of fixed points of a permutation.
Note that the spectrum $\sigma(\operatorname{fix})\subseteq [0,N]$. Consider a finite partition $\mathcal{P}$ of the spectrum into Borel subsets,
$$\sigma(\operatorname{fix})=\bigsqcup_{i=1}^m E_i.$$
Borel functional calculus can be used to attach a (pairwise-distinct) label $\lambda_i$ to each $E_i\subseteq\sigma(\operatorname{fix})$, and the number of fixed points of a quantum permutation $\varphi$ can be measured using $\operatorname{fix}_{\mathcal{P}}\in C(\mathbb{G})^{**}$ given by:
$$\operatorname{fix}_{\mathcal{P}}:=\sum_{i=1}^m \lambda_i\,\mathds{1}_{E_i}(\operatorname{fix}).$$
Measurement  is in the sense of algebraic quantum probability and the Gelfand--Birkhoff picture: when a quantum permutation $\varphi\in \mathbb{G}$ is measured with a finite spectrum observable $f=\sum_{\lambda \in \sigma(f)}\lambda \, p_\lambda$ in the bidual $C(\mathbb{G})^{**}$, the result is an element of $\sigma(f)$, with $f=\lambda$ with probability $\omega_{\varphi}(p_\lambda)$, and in that event there is wave-function collapse to $\widetilde{p_\lambda}\varphi$.
\begin{definition}
A quantum permutation $\varphi\in S_N^+$ has \emph{integer fixed points only} if for all Borel subsets $E\subseteq \sigma(\operatorname{fix})$,
$$E\cap \{0,1,\dots,N\}=\emptyset \implies \omega_{\varphi}(\mathds{1}_{E}(\operatorname{fix}))=0.$$
Equivalently, if
$$\omega_{\varphi}(\mathds{1}_{\{0,1,\dots,N\}}(\operatorname{fix}))=1.$$
Let $\mathcal{F}(\mathbb{G})\subseteq\mathbb{G}$ be the set of quantum permutations with integer fixed points.
\end{definition}

In the quotient $\pi_{\text{ab}}:C(\mathbb{G})\to C(G)$ to the classical version $G\leq \mathbb{G}$, the number of fixed points becomes  integer-valued:
$$\pi_{\text{ab}}(\operatorname{fix})=\operatorname{fix}_{G}=\sum_{\underset{\lambda\neq N-1}{\lambda=0,1\dots,N}}\lambda \,p_{\lambda},$$
with
$$p_{\lambda}(\sigma)=\begin{cases}
                        1, & \mbox{if $\sigma$ has $\lambda$ fixed points},  \\
                        0, & \mbox{otherwise}.
                      \end{cases}$$
Therefore, random permutations $\varphi_\nu\in S_N^+$ are elements of $\mathcal{F}(S_N^+)$.

\bigskip

There are plenty of concrete examples of genuinely quantum permutations with integer fixed points: e.g. the quantum permutation $E^{11}\circ \pi_{G_0}$ has zero fixed points. So, $\mathcal{F}(S_N^+)$ contains all the elements of $S_N$ in $S_N^+$, and also genuinely quantum permutations.
\begin{proposition}\label{Haarfix}
For $N\geq 4$, the Haar state on $C(S_N^+)$ is not an element of $\mathcal{F}(S_N^+)$. In fact:
$$\omega_h(\mathds{1}_{\{x\}}(\operatorname{fix}))=0\qquad (x\in[0,N]).$$
\end{proposition}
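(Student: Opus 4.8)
The plan is to identify $\omega_h(\mathds{1}_{\{x\}}(\operatorname{fix}))$ with the mass that the spectral distribution of the self-adjoint element $\operatorname{fix}=\sum_{j=1}^N u_{jj}$ assigns to the singleton $\{x\}$, and then to show that this distribution is atomless. Since $\operatorname{fix}\in C(S_N^+)$ and $\omega_h$ restricts to $h$, the assignment $E\mapsto \omega_h(\mathds{1}_E(\operatorname{fix}))$, for Borel $E\subseteq\sigma(\operatorname{fix})$, is a Borel probability measure $\mu$ on the compact set $\sigma(\operatorname{fix})\subseteq[0,N]$. Its moments are $\int x^k\,d\mu(x)=\omega_h(\operatorname{fix}^k)=h(\operatorname{fix}^k)$, and being compactly supported it is determined by them. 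The abelian von Neumann subalgebra of $C(S_N^+)^{**}$ generated by $\operatorname{fix}$ is $L^\infty(\sigma(\operatorname{fix}),\mu)$, under which spectral projections correspond to indicator functions, so that $\omega_h(\mathds{1}_E(\operatorname{fix}))=\mu(E)$. In particular $\omega_h(\mathds{1}_{\{x\}}(\operatorname{fix}))=\mu(\{x\})$, and the assertion is exactly that $\mu$ has no atoms.

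The crucial input is the identification of $\mu$. For $N\geq 4$ the main character $\operatorname{fix}$ has moments equal to the Catalan numbers, $h(\operatorname{fix}^k)=C_k$, because the intertwiner spaces of the magic fundamental representation are spanned by non-crossing partitions that become linearly independent precisely once $N\geq 4$ (below this threshold $S_N^+=S_N$ is classical, and the extra relations produce the integer atoms). This is the well-known computation of Banica. Consequently $\mu$ is the free Poisson (Marchenko--Pastur) law of parameter $1$, which is absolutely continuous with respect to Lebesgue measure on $[0,4]$ with density $\tfrac{1}{2\pi}\sqrt{(4-x)/x}$; in particular it carries no atoms, the $1/\sqrt{x}$ singularity at the origin being integrable.

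From atomlessness the conclusion is immediate: $\mu(\{x\})=0$ for every $x\in[0,N]$, that is $\omega_h(\mathds{1}_{\{x\}}(\operatorname{fix}))=0$, and since $\{0,1,\dots,N\}$ is finite we get $\omega_h(\mathds{1}_{\{0,1,\dots,N\}}(\operatorname{fix}))=0\neq 1$, so $h\notin\mathcal{F}(S_N^+)$. (Note that $N\in\sigma(\operatorname{fix})$ at the universal level, via $\varepsilon(\operatorname{fix})=N$, but this is invisible to $\mu$, which is supported in $[0,4]$.)

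The one genuinely nontrivial step, and hence the main obstacle if one wants a self-contained argument, is the moment computation $h(\operatorname{fix}^k)=C_k$; everything else is the routine dictionary between spectral projections in the bidual and the spectral measure, plus the elementary absolute continuity of the free Poisson law. If one prefers to avoid importing the full free Poisson identification, it would suffice to prove directly that $\mu$ has no atoms, but any such route still requires quantitative control of the moment sequence and so does not really circumvent the representation-theoretic input.
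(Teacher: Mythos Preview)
Your proof is correct and follows essentially the same route as the paper: identify the spectral measure of $\operatorname{fix}$ under $h$ via its moments (the Catalan numbers, citing Banica), recognise it as the Marchenko--Pastur law of parameter one, and conclude that it has no atoms. The paper's proof is a terse two-line version of exactly this; your additional remarks about the spectral dictionary in the bidual and the role of the threshold $N\geq 4$ are accurate elaborations.
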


\begin{proof}
  This follows from the fact that for $N\geq 4$ the moments of $\operatorname{fix}$ with respect to the Haar state are the Catalan numbers \cite{bb1}, and thus the corresponding measure is the Marchenko--Pastur law of parameter one, which has no atoms:
  $$\omega_h(\mathds{1}_{\{x\}}(\operatorname{fix}))=\int_{\{x\}} \frac{1}{2\pi}\sqrt{\frac{4}{t}-1}\,dt=0.$$

\end{proof}
\begin{corollary}\label{Haartq}
For $N\geq 4$, the Haar state on $C(S_N^+)$ is truly quantum.
\end{corollary}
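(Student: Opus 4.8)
The plan is to reduce the statement to Proposition \ref{Haarfix} via the decomposition of $C(S_N^+)^{**}$ into its classical and quantum parts. By definition the Haar state $h$ is truly quantum exactly when $\omega_h(p_Q)=1$, equivalently $\omega_h(p_C)=0$, where $p_C=p_{S_N}$. Since the classical version of $S_N^+$ is $S_N$, Theorem \ref{glp} gives $p_C=\sum_{\sigma\in S_N}p_\sigma$. The whole point will be to show that this projection lives ``inside the integer fixed points'', i.e.
$$p_C\leq \mathds{1}_{\{0,1,\dots,N\}}(\operatorname{fix}),$$
after which the conclusion is immediate.

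To establish this domination I would first show that each support projection $p_\sigma$ is an eigenprojection of $\operatorname{fix}=\sum_{j=1}^N u_{jj}$. Using Proposition \ref{charsupport}, $p_\sigma\leq u_{\sigma(j),j}$ and $p_\sigma u_{ij}=0$ whenever $i\neq\sigma(j)$, so $p_\sigma u_{jj}=p_\sigma$ when $\sigma(j)=j$ and $p_\sigma u_{jj}=0$ otherwise. Since $p_\sigma$ is central, multiplying $\operatorname{fix}$ by $p_\sigma$ therefore gives
$$\operatorname{fix}\,p_\sigma=\Big(\sum_{j\,:\,\sigma(j)=j}1\Big)p_\sigma=|\mathrm{Fix}(\sigma)|\,p_\sigma,$$
with $|\mathrm{Fix}(\sigma)|\in\{0,1,\dots,N\}$. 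Hence the range of $p_\sigma$ lies in the eigenspace of $\operatorname{fix}$ for the integer eigenvalue $|\mathrm{Fix}(\sigma)|$, so $p_\sigma\leq \mathds{1}_{\{|\mathrm{Fix}(\sigma)|\}}(\operatorname{fix})\leq \mathds{1}_{\{0,1,\dots,N\}}(\operatorname{fix})$. As the $p_\sigma$ are mutually orthogonal central projections (Proposition \ref{charsupport}(i)), summing over $\sigma\in S_N$ yields the desired $p_C\leq \mathds{1}_{\{0,1,\dots,N\}}(\operatorname{fix})$.

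Finally I would apply $\omega_h$ and split the integer spectral projection into singletons: since $\{0,1,\dots,N\}$ is a finite disjoint union of points,
$$\omega_h(p_C)\leq \omega_h\big(\mathds{1}_{\{0,1,\dots,N\}}(\operatorname{fix})\big)=\sum_{x=0}^N\omega_h\big(\mathds{1}_{\{x\}}(\operatorname{fix})\big)=0,$$
the last equality being exactly Proposition \ref{Haarfix} (this is where $N\geq 4$ enters, through the atomless Marchenko--Pastur law). Therefore $\omega_h(p_C)=0$ and $\omega_h(p_Q)=1-\omega_h(p_C)=1$, so $h$ is truly quantum. The only genuinely delicate point is the inequality $p_C\leq\mathds{1}_{\{0,1,\dots,N\}}(\operatorname{fix})$; it need not be an equality, since the quantum part $N_{\text{ab}}^{**}$ may also carry integer-fixed-point mass, but one-sided domination is all that is needed, and it is exactly captured by the eigenprojection computation above.
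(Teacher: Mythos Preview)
Your proof is correct and follows the same overall strategy as the paper: both reduce to Proposition~\ref{Haarfix} by showing that the classical part of $h$ would force positive mass at an integer point of the spectral measure of $\operatorname{fix}$. The execution differs slightly. The paper argues by contradiction, supposing $\omega_h(p_\sigma)>0$ for some $\sigma$, identifying $\widetilde{p_\sigma}h$ with the character $\operatorname{ev}_\sigma$ via Proposition~\ref{charactertwo}, and then pushing through $\pi_{\text{ab}}^{**}$ (Lemma~\ref{fact}) to obtain $\omega_\sigma(\mathds{1}_{\{\lambda\}}(\operatorname{fix}))=1$. Your route is more direct and purely algebraic: from $p_\sigma u_{jj}=\delta_{j,\sigma(j)}p_\sigma$ you get $\operatorname{fix}\,p_\sigma=|\mathrm{Fix}(\sigma)|\,p_\sigma$, hence $p_\sigma\leq \mathds{1}_{\{|\mathrm{Fix}(\sigma)|\}}(\operatorname{fix})$, and sum. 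This avoids both the character identification and Lemma~\ref{fact}, and yields the clean projection inequality $p_C\leq \mathds{1}_{\{0,1,\dots,N\}}(\operatorname{fix})$ as an intermediate result, which is a little stronger than what the paper explicitly records.
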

\begin{proof}
Assume that $h\in S_N^+$ is mixed:
$$\omega_h(p_C)>0\implies \omega_h(p_{\sigma})>0$$
for some $\sigma\in S_N$. Let $q_\sigma:=\mathds{1}_{S_N^+}-p_\sigma$. Recalling that $p_\sigma$ is central:
$$\omega_h(f)=\omega_h(p_\sigma)\,(\widetilde{p_\sigma}h)(f)+\omega_h(q_\sigma)\,(\widetilde{q_\sigma}h)(f)\qquad(f\in C(S_N^+)^{**}).$$
Note that $\widetilde{p_\sigma}h$ has Birkhoff slice $\Phi(\widetilde{p_\sigma}h)=P_\sigma$, which implies it is a character. By Proposition \ref{charactertwo}, $\widetilde{p_\sigma}h=\operatorname{ev}_\sigma$, which factors through the abelianisation $\pi_{\text{ab}}$:
$$\operatorname{ev}_\sigma(f)=\pi_{\text{ab}}(f)(\sigma)\qquad (f\in C(S_N^+)),$$
 while the extension $\omega_\sigma:C(S_N^+)^{**}\to \mathbb{C}$ factors through $\pi_{\text{ab}}^{**}$. Suppose that $\sigma$ has $\lambda\in\{0,1,\dots,N\}$ fixed points. Using Lemma \ref{fact}, consider, where $p_\lambda=\pi_{\text{ab}}^{**}(\mathds{1}_{\{\lambda\}}(\operatorname{fix}))$,
 \begin{align*}
\omega_\sigma(\mathds{1}_{\{\lambda\}}(\operatorname{fix}))&=p_\lambda(\sigma)=1,
\\\implies \omega_h(\mathds{1}_{\{\lambda\}}(\operatorname{fix}))&=\omega_h(p_\sigma)\,(\widetilde{p_\sigma}h)(\mathds{1}_{\{\lambda\}}(\operatorname{fix}))+\omega_h(q_\sigma)\,(\widetilde{q_\sigma}h)(\mathds{1}_{\{\lambda\}}(\operatorname{fix}))
\\&\geq \omega_h(p_\sigma)\,\omega_\sigma(\mathds{1}_{\{\lambda\}}(\operatorname{fix}))=\omega_h(p_\sigma)>0,
\end{align*}
contradicting Proposition \ref{Haarfix}.
\end{proof}
However, $\mathcal{F}(\mathbb{G})\subseteq \mathbb{G}$ is in general not a Pal set:
\begin{example}
Let $\widehat{S_4}\leq S_5^+$ by:
$$u=\begin{bmatrix}
      u^{(12)} & 0 \\
      0 & u^{(234)}
    \end{bmatrix}.$$
Here $u^{(12)}\in M_2(C(\widehat{S_4}))$ and $u^{(234)}\in M_3(C(\widehat{S_4}))$ are magic unitaries associated with $(12),\,(234)\in S_4$ (Chapter 13, \cite{ba1}). Consider the regular representation:
$$\pi: C(\widehat{S_4})\to B(\mathbb{C}^{24}).$$
With $e\in S_4$ the identity permutation, consider:
$$\pi(\operatorname{fix})=\pi(2e+(12)+(234)+(243)).$$
The spectrum contains $\lambda_{\pm}:=(5\pm\sqrt{17})/2$ (see \cite{mcc}), but consider unit eigenvectors $x_2$ and $x_4\in\mathbb{C}^{24}$ of eigenvalues two and four that give quantum permutations:
 $$\varphi_2=\langle x_2,\pi(\cdot )x_2\rangle\text{ and }\varphi_4=\langle x_4,\pi(\cdot )x_4\rangle,$$
 with two and four fixed points. It can be shown that:
$$\varphi:=\frac12\varphi_2+\frac12\varphi_4$$
is strict, that is $|\varphi(\sigma)|=1$ for $\sigma=e$ only, and therefore as the convolution in $\widehat{S_4}$ is pointwise multiplication,
$$\varphi^{\star k}\to\delta_e,$$
which is the Haar state on $C(\widehat{S_4})$. The Haar state for finite quantum groups such as $\widehat{S_4}$ is faithful, and so where $p_{\lambda_+}$ is the spectral projection associated with the eigenvalue $\lambda_+$:
$$h_{\widehat{S_4}}(p_{\lambda_+})>0,$$
which implies that $(\varphi^{\star k})_{k\geq 0}$ does not converge to an element with integer fixed points, and so  $\mathcal{F}(\widehat{S_4})$ is not a Pal set, and thus neither is $\mathcal{F}(S_N^+)$ for $N\geq 4$.
\end{example}
\begin{example}
In the case of $C(S_N^+)$ ($N\geq 4$), the central algebra $C(S_N^+)_0$ generated by the characters of irreducible unitary representations is commutative \cite{fre}, and generated by $\operatorname{fix}$, and so the central algebra $C(S_N^+)_0\cong C([0,N])$, and the central states are given by Radon probability measures.

\bigskip

The quantum permutation `uniform on quantum transpositions', $\varphi_{\text{tr}}$ from \cite{fre}, is a central state given by:
$$\varphi_{\text{tr}}(f)=f(N-2)\qquad(f\in C(S_N^+)_0)$$
 It has $N-2$ fixed points (see \cite{mcc}) but its convolution powers converge to the Haar state $h\in S_N^+$, which is not in $\mathcal{F}(S_N^+)$ by Proposition \ref{Haarfix}.
\end{example}
\addtocontents{toc}{\protect\setcounter{tocdepth}{0}}
\section*{Appendix: Proof of Vaes' Remark \ref{Vaes}}
\emph{The following result was proven by Stefaan Vaes in a MathOverflow post \cite{Vaes}. Stefaan Vaes kindly permits the proof to be included here.}

\bigskip

The below proof, uses techniques beyond the contents of this paper, the notation for which is summarised very briefly here. The proof applies to all compact quantum groups, and not just those with $C(\mathbb{G})$ a universal algebra of continuous functions. To each compact quantum group $\mathbb{G}$ can be associated a set $\operatorname{Irr}(\mathbb{G})$ of equivalence classes of mutually inequivalent irreducible unitary representations. Each can be viewed as an element $u^\alpha\in C(\mathbb{G})\otimes B(\mathsf{H}_\alpha)$, or, equivalently, as a matrix $u^\alpha\in M_{n_\alpha}(C(\mathbb{G}))$. The tensor product $u^{\alpha\otimes \beta}\in M_{n_\alpha\cdot n_\beta}(C(\mathbb{G}))$ is the matrix with entries $u^\alpha_{ij}u^\beta_{kl}$. Viewing $u^\alpha,\,u^\beta$ as matrices with entries in $C(\mathbb{G})$,
$$T\in \operatorname{Mor}_{\mathbb{G}}(\alpha,\beta)\iff Tu^\alpha=u^\beta T.$$  Denote by $c_0(\widehat{\mathbb{G}})$ the $c_0$ direct sum of the matrix algebras $B(\mathsf{H}_\alpha)$, $\alpha \in \operatorname{Irr}(\mathbb{G})$. Denote by $\ell^\infty(\widehat{\mathbb{G}})$ the $\ell^\infty$ direct sum. Denote by $W \in M(C(\mathbb{G}) \otimes c_0(\widehat{\mathbb{G}}))$ the corresponding direct sum of unitary representations of $\mathbb{G}$. Similarly define $V \in M(C_{\text{u}}(\mathbb{H}) \otimes c_0(\widehat{\mathbb{H}}))$ for the compact quantum group $\mathbb{H}$.

\begin{proposition}
  Let $\mathbb{G}$ and $\mathbb{H}$ be compact quantum groups and $\pi : C(\mathbb{G}) \to C_{\text{u}}(\mathbb{H})$ a surjective unital $*$-homomorphism satisfying $(\pi \otimes \pi) \circ \Delta = \Delta_{\mathbb{H}} \circ \pi$.  Denote by $h_{\mathbb{H}} = h_{C_{\text{u}}(\mathbb{H})} \circ \pi$. Let $\varphi$ be a state on $C(\mathbb{G})$. Then the following statements are equivalent.
\begin{enumerate}
\item[(i)] There exists a state $\psi$ on $C_{\text{u}}(\mathbb{H})$ such that $\varphi = \psi \circ \pi$.
\item[(ii)] $\varphi$ is in the quasi-subgroup generated by $h_{\mathbb{H}}$,
\item[(iii)] $\varphi \star h_{\mathbb{H}}=h_{\mathbb{H}}$.
\end{enumerate}
\end{proposition}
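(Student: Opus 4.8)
The plan is to prove the cycle (i) $\Rightarrow$ (ii) $\Rightarrow$ (iii) $\Rightarrow$ (i), with essentially all of the work in the last implication. Throughout I would fix, for each $\alpha\in\operatorname{Irr}(\mathbb{G})$, the matrices $T_\alpha:=(\operatorname{id}\otimes\varphi)(u^\alpha)$ and $P_\alpha:=(\operatorname{id}\otimes h_{\mathbb{H}})(u^\alpha)$ in $B(\mathsf{H}_\alpha)$. Since $\operatorname{id}\otimes\varphi$ is unital completely positive and $u^\alpha$ is unitary, $\|T_\alpha\|\leq 1$; and since $h_{\mathbb{H}}=h_{C_{\text{u}}(\mathbb{H})}\circ\pi$, the matrix $P_\alpha=(\operatorname{id}\otimes h_{C_{\text{u}}(\mathbb{H})})\big((\operatorname{id}\otimes\pi)(u^\alpha)\big)$ is the orthogonal projection onto the space $\mathsf{H}_\alpha^{\mathbb{H}}$ of $\mathbb{H}$-fixed vectors of the pushed-forward representation $\pi^{(\alpha)}:=(\operatorname{id}\otimes\pi)(u^\alpha)$ of $\mathbb{H}$.

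For (i) $\Rightarrow$ (ii) I would compute, using $(\pi\otimes\pi)\circ\Delta=\Delta_{\mathbb{H}}\circ\pi$ and the invariance of the Haar state $h_{C_{\text{u}}(\mathbb{H})}$, that for $\varphi=\psi\circ\pi$
\[
\varphi\star h_{\mathbb{H}}=(\psi\otimes h_{C_{\text{u}}(\mathbb{H})})\circ(\pi\otimes\pi)\circ\Delta=\big((\psi\otimes h_{C_{\text{u}}(\mathbb{H})})\circ\Delta_{\mathbb{H}}\big)\circ\pi=h_{C_{\text{u}}(\mathbb{H})}\circ\pi=h_{\mathbb{H}},
\]
and the same computation on the other leg gives $h_{\mathbb{H}}\star\varphi=h_{\mathbb{H}}$, so $\varphi\in\mathbb{S}_{h_{\mathbb{H}}}$. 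The step (ii) $\Rightarrow$ (iii) is immediate, since membership of the quasi-subgroup includes the relation $\varphi\star h_{\mathbb{H}}=h_{\mathbb{H}}$.

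For (iii) $\Rightarrow$ (i) I would first reduce to showing that $\varphi$ annihilates $\ker\pi$: once this holds, $\varphi$ descends to a unital functional $\psi$ on $C(\mathbb{G})/\ker\pi\cong C_{\text{u}}(\mathbb{H})$ of norm one, hence a state, with $\psi\circ\pi=\varphi$, so positivity of $\psi$ is automatic. Evaluating (iii) on the coefficients of $u^\alpha$ and using $\Delta(u^\alpha_{ij})=\sum_k u^\alpha_{ik}\otimes u^\alpha_{kj}$ rewrites it as $T_\alpha P_\alpha=P_\alpha$ for every $\alpha$; contractivity of $T_\alpha$ then also gives $T_\alpha^{*}P_\alpha=P_\alpha$, so $T_\alpha$ acts as the identity on the reducing subspace $\mathsf{H}_\alpha^{\mathbb{H}}$. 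On the other side, $\varphi|_{\ker\pi}=0$ is equivalent to $T_\alpha$ being a $\pi$-pullback for every $\alpha$, i.e. to $T_\alpha$ lying in the commutant $\operatorname{End}_{\mathbb{H}}(\pi^{(\alpha)})'\subseteq B(\mathsf{H}_\alpha)$, or equivalently $T_\alpha$ commuting with every $\mathbb{H}$-intertwiner of $\pi^{(\alpha)}$.

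The hard part, and the only place representation theory enters, is bridging these two: the per-$\alpha$ condition $T_\alpha P_\alpha=P_\alpha$ is by itself too weak (for $\alpha$ with no $\mathbb{H}$-fixed vectors it is vacuous, yet $T_\alpha$ must still be a pullback), so I would exploit that the whole family $(T_\alpha)_\alpha$ is a positive-definite function on $\widehat{\mathbb{G}}$ and hence natural for morphisms and tensor products. The strategy is to realise an intertwiner $x\in\operatorname{End}_{\mathbb{H}}(\pi^{(\alpha)})$, via Frobenius reciprocity and monoidality of $\pi$, as an $\mathbb{H}$-fixed vector of the pushforward $\pi^{(\overline{\alpha}\otimes\alpha)}$ of $u^{\overline{\alpha}\otimes\alpha}$, where the already-established fact that $T$ fixes $\mathbb{H}$-invariant vectors applies; the compatibility of $T$ with the tensor product and the conjugate-equation (duality) morphisms should then force $[T_\alpha,x]=0$. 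Ranging over all $\alpha$ and using surjectivity of $\pi$ (so that every irreducible of $\mathbb{H}$ occurs in some $\pi^{(\alpha)}$) yields $\varphi|_{\ker\pi}=0$ and the desired factorisation. The main obstacle is precisely this tensor-categorical propagation of the fixed-vector condition into commutation with all intertwiners.
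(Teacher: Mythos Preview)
Your overall architecture matches the paper's: define $T_\alpha=(\operatorname{id}\otimes\varphi)(u^\alpha)$, extract $T_\alpha P_\alpha=P_\alpha$ from (iii), upgrade this via Frobenius/conjugate equations to $T_\alpha Z=ZT_\beta$ for all $\mathbb{H}$-intertwiners $Z$, and conclude that $T=\widehat{\pi}(S)$ so that $\varphi$ factors through $\pi$. The implications (i)$\Rightarrow$(ii)$\Rightarrow$(iii) and the reduction of (iii)$\Rightarrow$(i) to an intertwining statement are fine.

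The gap is in the sentence ``the whole family $(T_\alpha)_\alpha$ is a positive-definite function on $\widehat{\mathbb{G}}$ and hence natural for morphisms and tensor products''. Naturality for $\mathbb{G}$-morphisms is correct, but positive-definiteness does \emph{not} give $T_{\beta\otimes\gamma}=T_\beta\otimes T_\gamma$; that identity holds only when $\varphi$ is a character. So knowing that $T_{\overline{\alpha}\otimes\alpha}$ fixes the $\mathbb{H}$-invariant vector $\hat{x}=(1\otimes x)t$ does not by itself yield $[T_\alpha,x]=0$: to pass back through the Frobenius isomorphism you must relate $T_{\overline{\alpha}\otimes\alpha}$ to $T_{\overline{\alpha}}$ and $T_\alpha$, and the available naturality (for $\mathbb{G}$-morphisms only) does not do that. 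This is precisely the ``obstacle'' you flag, and it is a genuine missing step, not just a detail.

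The paper fills it with a Cauchy--Schwarz argument. Writing $q=\bigoplus_\alpha P_\alpha$ and $R=W(1\otimes q)-1\otimes q$, one has $(\varphi\otimes\operatorname{id})(R^*R)=2q-qTq-qT^*q=0$ (using $Tq=q$, hence $qT^*=q$), so by Cauchy--Schwarz $(\varphi\otimes\operatorname{id}\otimes\operatorname{id})(W_{12}R_{13})=0$, which unpacks to the key relation
\[
T_{\beta\otimes\gamma}\,(1\otimes P_\gamma)=T_\beta\otimes P_\gamma\qquad(\beta,\gamma\in\operatorname{Irr}(\mathbb{G})).
\]
This is the only ``compatibility with the tensor product'' that is actually true and actually needed. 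With it, for $X\in\operatorname{Mor}_{\mathbb{G}}(\alpha,\beta\otimes\gamma)$ and an $\mathbb{H}$-fixed vector $Y\in\mathsf{H}_\gamma$ one gets $T_\alpha\,X^*(1\otimes Y)=X^*(1\otimes Y)\,T_\beta$; choosing $\gamma=\overline{\beta}\otimes\alpha$, $X=s\otimes 1$, $Y=(1\otimes Z)t$ recovers any $Z\in\operatorname{Mor}_{\mathbb{H}}(\beta,\alpha)$ as $X^*(1\otimes Y)$, giving $T_\alpha Z=ZT_\beta$. Your Frobenius idea is the same manoeuvre, but it only goes through once this positivity step is in place. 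A minor additional point: to conclude $\varphi|_{\ker\pi}=0$ you need the cross-intertwining $T_\alpha Z=ZT_\beta$ for $\alpha\neq\beta$ as well, not just $[T_\alpha,\operatorname{End}_{\mathbb{H}}(\pi^{(\alpha)})]=0$, since $\ker\pi$ is not the direct sum of the per-$\alpha$ kernels; the argument above gives this automatically.
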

\begin{proof}
The implications $(i) \Rightarrow (ii) \Rightarrow (iii)$ are trivial. Assume that (iii) holds. Denote by $c_0(\widehat{\mathbb{G}})$ the $c_0$ direct sum of the matrix algebras $B(H_\alpha)$, $\alpha \in \operatorname{Irr}(\mathbb{G})$. Denote by $\ell^\infty(\widehat{\mathbb{G}})$ the $\ell^\infty$ direct sum. Denote by $W \in M(C(\mathbb{G}) \otimes c_0(\widehat{\mathbb{G}}))$ the corresponding direct sum of unitary representations of $\mathbb{G}$. Similarly define $V \in M(C(\mathbb{H}) \otimes c_0(\widehat{\mathbb{H}}))$ for the compact quantum group $\mathbb{H}$. The morphism $\pi$ dualizes to a morphism $\widehat{\pi} : \ell^\infty(\widehat{\mathbb{H}}) \to \ell^\infty(\widehat{\mathbb{G}})$ satisfying $(\pi \otimes \mathrm{id})(W) = (\mathrm{id} \otimes \widehat{\pi})(V)$.

\bigskip

If given, for $i = 1,2$, unitary representations $U_i \in M(C(\mathbb{G}) \otimes \mathcal{K}(H_i))$ of $\mathbb{G}$ on Hilbert spaces $H_i$, define the intertwiner space $\operatorname{Mor}_\mathbb{G}(U_1,U_2)$ as the space of bounded operators $A \in B(H_1,H_2)$ satisfying $(1 \otimes A) U_1 = U_2 (1 \otimes A)$. Every unitary representation $U \in M(C(\mathbb{G}) \otimes \mathcal{K}(H))$ of $\mathbb{G}$ can be restricted to a unitary representation $U_\mathbb{H} := (\pi \otimes \mathrm{id})(U)$ of $\mathbb{H}$. Denote by $\operatorname{Mor}_\mathbb{H}(U_1,U_2)$ the space of intertwiners between these restrictions, so that $A \in \operatorname{Mor}_\mathbb{H}(U_1,U_2)$ if and only if $A \in B(H_1,H_2)$ and $(1 \otimes A) U_{1,\mathbb{H}} = U_{2,\mathbb{H}} (1 \otimes A)$. Note that $\operatorname{Mor}_\mathbb{G}(U_1,U_2) \subset \operatorname{Mor}_\mathbb{H}(U_1,U_2)$. Write $\operatorname{End}_\mathbb{H}(U) = \operatorname{Mor}_\mathbb{H}(U,U)$.

\bigskip

Note that the formula $U = (\mathrm{id} \otimes \theta)(V)$ defines a bijective correspondence between unitary representations $U \in M(C(\mathbb{H}) \otimes \mathcal{K}(H))$ of $\mathbb{H}$ and unital normal $*$-homomorphisms $\theta : \ell^\infty(\widehat{\mathbb{H}}) \to B(H)$. Then $\operatorname{End}_\mathbb{H}(U) = \theta(\ell^\infty(\widehat{\mathbb{H}}))'$ and $\theta(\ell^\infty(\widehat{\mathbb{H}})) = \operatorname{End}_\mathbb{H}(U)'$. It follows that for $T = (T_\alpha)_{\alpha \in \operatorname{Irr}(\mathbb{G})}$ in $\ell^\infty(\widehat{\mathbb{G}})$,
\begin{equation}\label{eq.rel-com}
T \in \widehat{\pi}(\ell^\infty(\widehat{\mathbb{H}})) \quad\text{iff}\quad T_\alpha Z = Z T_\beta \;\;\text{for all $\alpha,\beta \in \operatorname{Irr}(\mathbb{G})$ and $Z \in \operatorname{Mor}_\mathbb{H}(\beta,\alpha)$.}
\end{equation}

Denote by $p_\varepsilon \in c_0(\widehat{\mathbb{H}})$ the minimal central projection that corresponds to the trivial representation of $\mathbb{H}$. Write $q = \widehat{\pi}(p_\varepsilon)$. Note that $(h_1 \otimes \mathrm{id})(W) = q$. Define $T \in \ell^\infty(\widehat{\mathbb{G}})$ by $T = (\varphi \otimes \mathrm{id})(W)$. Since $(\Delta \otimes \mathrm{id})(W) = W_{13} W_{23}$ and $\varphi \ast h_1 = h_1$, it follows that $q = T q$. Write $R = W(1 \otimes q) - (1 \otimes q)$. Because
$$R^* R = 2 (1 \otimes q) - (1 \otimes q) W (1 \otimes q) - (1 \otimes q) W^* (1 \otimes q) \; ,$$
it is the case that $(\varphi \otimes \mathrm{id})(R^* R) = 0$. Since $\varphi$ is a state on a C$^*$-algebra, it follows that
$$(\varphi \otimes \mathrm{id} \otimes \mathrm{id})(W_{12} R_{13}) = 0 \; .$$
This means that
\begin{equation}\label{eq.formula}
(\varphi \otimes \mathrm{id} \otimes \mathrm{id})(W_{12} W_{13}) \, (1 \otimes q) = T \otimes q \; .
\end{equation}
Write $T$ as the direct sum of the matrices $T_\alpha \in B(H_\alpha)$ for $\alpha \in \operatorname{Irr}(\mathbb{G})$. Similarly denote the components of $q$ by $q_\gamma$ for every $\gamma \in \operatorname{Irr}(\mathbb{G})$. Take arbitrary $\alpha,\beta,\gamma \in \operatorname{Irr}(\mathbb{G})$. Consider the component in $B(H_\beta) \otimes B(H_\gamma)$ of \eqref{eq.formula}. Since $W$ is the direct sum of the unitary representations $U_\zeta \in C(\mathbb{G}) \otimes B(H_\zeta)$, $\zeta \in \operatorname{Irr}(\mathbb{G})$,
\begin{equation}\label{eq.new-formula}
(\varphi \otimes \mathrm{id} \otimes \mathrm{id})(U_{\beta,12} U_{\gamma,13}) \, (1 \otimes q_\gamma) = T_\beta \otimes q_\gamma \; .
\end{equation}
Take arbitrary $\alpha \in \operatorname{Irr}(\mathbb{G})$, $X \in \operatorname{Mor}_\mathbb{G}(\alpha,\beta \otimes \gamma)$ and $Y \in \operatorname{Mor}_\mathbb{H}(\varepsilon,\gamma)$. Multiplying \eqref{eq.new-formula} on the left by $X^*$ and on the right by $1 \otimes Y$,
\begin{equation}\label{eq.new-new-formula}
(\varphi \otimes \mathrm{id} \otimes \mathrm{id})((1 \otimes X^*) U_{\beta,12} U_{\gamma,13}) \, (1 \otimes q_\gamma Y) = X^*(T_\beta \otimes q_\gamma Y) \; .
\end{equation}
Note that $q_\gamma Y = \widehat{\pi}(p_\varepsilon)_\gamma Y = Y \widehat{\pi}(p_\varepsilon)_\varepsilon = Y$. Therefore, the left hand side of \eqref{eq.new-new-formula} equals
\begin{align*}
(\varphi \otimes \mathrm{id} \otimes \mathrm{id})((1 \otimes X^*) U_{\beta,12} U_{\gamma,13}) \, (1 \otimes Y) &= (\varphi \otimes \mathrm{id})(U_\alpha (1 \otimes X^*)) \, (1 \otimes Y) \\ &= (\varphi \otimes \mathrm{id})(U_\alpha) \, X^* (1 \otimes Y) = T_\alpha \, X^* (1 \otimes Y) \; ,
\end{align*}
while the right hand side of \eqref{eq.new-new-formula} equals
$$X^* (T_\beta \otimes Y) = X^* (1 \otimes Y) \, T_\beta \; .$$
Thus
$$T_\alpha \, X^*(1 \otimes Y) = X^*(1 \otimes Y) \, T_\beta$$
for all $\alpha,\beta,\gamma \in \operatorname{Irr}(\mathbb{G})$ and for all $X \in \operatorname{Mor}_\mathbb{G}(\alpha,\beta \otimes \gamma)$ and $Y \in \operatorname{Mor}_\mathbb{H}(\varepsilon,\gamma)$. By taking linear combinations, the same holds if $\gamma$ is any finite dimensional unitary representation of $\mathbb{G}$, not necessarily irreducible.

\bigskip

Now $T_\alpha \, Z = Z \, T_\beta$ for all $\alpha,\beta \in \operatorname{Irr}(\mathbb{G})$ and $Z \in \operatorname{Mor}_\mathbb{H}(\beta,\alpha)$. To prove this, choose solutions of the conjugate equations for $\alpha,\beta \in \operatorname{Irr}(\mathbb{G})$, given by $t \in \operatorname{Mor}_\mathbb{G}(\varepsilon,\overline{\beta} \otimes \beta)$ and $s \in \operatorname{Mor}_\mathbb{G}(\varepsilon,\beta \otimes \overline{\beta})$. Writing $\gamma = \overline{\beta} \otimes \alpha$, $X = s \otimes 1$ and $Y = (1 \otimes Z)t$, it follows that $X^*(1 \otimes Y) = Z$ and the claim follows. By this claim and by \eqref{eq.rel-com}, this\ means that $T = \widehat{\pi}(S)$ for some $S \in \ell^\infty(\widehat{\mathbb{H}})$.

\bigskip

Every $a \in \mathcal{O}(\mathbb{G})$ is of the form $a = (\mathrm{id} \otimes \omega)(W)$ where $\omega$ is a uniquely determined, ``finitely supported'' functional on $\ell^\infty(\mathbb{G})$. Note that $\pi(a) = 0$ if and only if $\omega \circ \widehat{\pi} = 0$. Since $(\varphi \otimes \mathrm{id})(W) = T = \widehat{\pi}(S)$,  there is a well defined linear functional $\psi : \mathcal{O}(\mathbb{H}) \to \mathbb{C}$ such that $\psi(\pi(a)) = \varphi(a)$ for all $a \in \mathcal{O}(\mathbb{G})$.

\bigskip

Then $\psi(1) = \varphi(1) = 1$ and, because $\pi : \mathcal{O}(\mathbb{G}) \to \mathcal{O}(\mathbb{H})$ is surjective, $\psi(b^* b) \geq 0$ for all $b \in \mathcal{O}(\mathbb{H})$. Since $C(\mathbb{H})$ is the universal C$^*$-algebra of the compact quantum group $\mathbb{H}$, it follows that $\psi$ uniquely extends to a state on $C(\mathbb{H})$ (still denoted $\psi$). By construction, $\varphi = \psi \circ \pi$.
\end{proof}
\subsection*{Acknowledgement}
Some of this work goes back to discussions with Teo Banica. Thanks also to Matthew Daws for helping with Section \ref{bid}, and Ruy Exel with the argument in Theorem \ref{wfct} (ii). Special thanks to Stefaan Vaes for going above and beyond in assisting with Remark \ref{Vaes}, and for kindly allowing the teased out proof to be reproduced here. Finally thank you to the referees for their many helpful suggestions.

\end{document}